\newtheorem{theorem}{Theorem}[section]
\newtheorem{theorem-definition}[theorem]{Theorem-Definition}
\newtheorem{theorem-construction}[theorem]{Theorem-Construction}
\newtheorem{lemma-definition}[theorem]{Lemma--Definition}
\newtheorem{lemma-construction}[theorem]{Lemma--Construction}
\newtheorem{lemma}[theorem]{Lemma}
\newtheorem{proposition}[theorem]{Proposition}
\newtheorem{corollary}[theorem]{Corollary}
\newtheorem{conjecture}[theorem]{Conjecture}
\newtheorem{definition}[theorem]{Definition}
\newcommand{\old}[1]{}
\newcommand{\Z}{{\mathbb Z}}
\newcommand{\R}{{\mathbb R}}
\newcommand{\C}{{\mathbb C}}
\newcommand{\T}{{\mathbb T}}
\newcommand{\ra}{\rightarrow}
\newcommand{\be}{\begin{equation}}
\newcommand{\ee}{\end{equation}}
\newcommand{\bt}{\begin{theorem}}
\newcommand{\et}{\end{theorem}}
\newcommand{\bd}{\begin{definition}}
\newcommand{\ed}{\end{definition}}
\newcommand{\bp}{\begin{proposition}}
\newcommand{\ep}{\end{proposition}}
\newcommand{\bl}{\begin{lemma}}
\newcommand{\el}{\end{lemma}}
\newcommand{\bc}{\begin{corollary}}
\newcommand{\ec}{\end{corollary}}
\newcommand{\bcon}{\begin{conjecture}}
\newcommand{\econ}{\end{conjecture}}
\begin{document}

%%
%% The title of the paper goes here.  Edit to your title.
%%

\title{Spectra of biperiodic planar networks}
\author{Terrence George } 
\newcommand{\Addresses}{{% additional braces for segregating \footnotesize
  \bigskip
  \footnotesize

  Terrence George, \textsc{Department of Mathematics, Brown University,
    Providence, Rhode Island 02912}\par\nopagebreak
  \textit{E-mail address}: \texttt{gterrence@math.brown.edu}
}}

\maketitle
\bibliographystyle{amsxport}
\begin{abstract}
A biperiodic planar network is a pair $(G,c)$ where $G$ is a graph embedded on the torus and $c$ is a function from the edges of $G$ to non-zero complex numbers. Associated to the discrete Laplacian on a biperiodic planar network is its spectrum: a triple $(C,S,\nu)$,  where $C$ is a curve and $S$ is a divisor on it. We give a complete classification of networks (modulo a natural equivalence) in terms of their spectral data. The space of networks has a large group of cluster automorphisms arising from the $Y-\Delta$ transformations. We show that the spectrum provides action-angle coordinates for the discrete cluster integrable systems defined by these automorphisms.
\end{abstract}
\tableofcontents
\section{Introduction}
A planar resistor network is a pair $(\tilde{G},\tilde{c})$ where $\tilde{G}$ is a planar graph and $\tilde{c}$ is a conductance function that assigns a non-zero complex number to each edge of $\tilde G$, defined up to multiplication by a global constant. It is said to be biperiodic if translations by $\mathbb Z^2$ act on $(\tilde{G},\tilde{c})$ by isomorphisms. This  is equivalent to the data of the quotient $(G,c):=(\tilde{G},\tilde{c})/\mathbb Z^2$, where $G$ is a graph on a torus. Hereafter we assume that our networks are on a torus.\\

The fundamental operator in the study of networks is the discrete Laplacian. It has a certain spectrum, defined below, and the main goal of this paper is to show that this is a birational isomorphism with a certain moduli space of curves and divisors and therefore provides a way to classfiy networks. While in typical geometric or probabilistic applications the conductances are always positive real numbers, the algebraic nature of the problem leads us to consider general (nonzero) complex conductances.\\

There is a natural equivalence relation on networks, defined by certain local rearrangements of the graph and its conductances,
which preserves the spectrum. To define this equivalence relation, let us start by defining a zig-zag path. A zig-zag path on $G$ is a path that alternately turns maximally left or right. A resistor network $G$ is minimal (\cite{CdV94}, \cite{CIM98}) if lifts of any two zig-zag paths to $\tilde{G}$ do not intersect more than once and any lift of a zig-zag path has no self intersections. Minimality is a mild assumption on networks since any network may be reduced to a minimal one by certain elementary moves without affecting its electrical properties. The Newton polygon of a minimal resistor network is the unique integral polygon whose primitive edges are given by the homology classes of zig-zag paths in cyclic order. Since zig-zag paths come in pairs related by flipping the orientation, the Newton polygon of a network is always centrally symmetric.\\

We say that two minimal networks $(G_1,c_1)$ and $(G_2,c_2)$ are topologically equivalent if there is a sequence of $Y-\Delta$ moves that takes the underlying graph $G_1$ to the graph $G_2$. Topological equivalence classes of networks are parameterized by centrally symmetric Newton polygons (\cite{GK12}). In particular, any two minimal resistor networks with the same Newton polygon are related by a sequence of elementary transformations called $Y-\Delta$ transformations. \\

Two networks $(G_1,c_1)$ and $(G_2,c_2)$ are electrically equivalent if there is a sequence of $Y-\Delta$ moves that takes the network $(G_1,c_1)$ to the network $(G_2,c_2)$. Goncharov and Kenyon (\cite{GK12}) constructed the resistor network cluster variety $\mathcal R^0_N$ that parameterizes electrical equivalence classes of resistor networks that lie in the same topological equivalence class associated to the polygon $N$ as follows: 
A centrally symmetric integral polygon $N$ determines a finite collection of minimal resistor networks whose Newton polygon is $N$, related by $Y-\Delta$ transformations. To each minimal resistor network $G$ is associated a complex torus $(\mathbb C^*)^{\text{number of edges of }G-1}$, which parameterizes conductance functions on $G$. The $Y-\Delta$ move $G_1 \ra G_2$ induces a birational map between the complex tori associated to $G_1$ and $G_2$. $\mathcal{R}^0_N$ is obtained by gluing the complex tori using these birational maps. \\

Goncharov and Kenyon further showed that $\mathcal{R}^0_N$ is a Lagrangian subvariety of an algebraic completely integrable Hamiltonian system $\mathcal X_N^0$ associated to the dimer model. Let $S_N$ be the moduli space of triples $(C,S,\nu)$ where $C$ is the vanishing locus of a Laurent polynomial $P(z,w)$ with Newton polygon $N$, $S$ is a degree $g$ effective divisor on $C$ (where $g=$ number of interior lattice points in $N$) and $\nu$ is a parameterization of the points at infinity of $C$. Goncharov and Kenyon constructed the spectral map $\mathcal X_N^0 \ra \mathcal S_N$ and showed that it is a birational isomorphism. Fock (\cite{Fock15}) constructed an explicit inverse map in terms of theta functions on the Jacobian of $C$. In this construction, the elementary transformation in dimer model (the spider move) is described by Fay's trisecant identity.\\

Associated to the Laplacian on a biperiodic planar network is its spectrum $\mathcal R^0_N \ra \mathcal{S}_N$, where $\mathcal S_N$ is defined as in the previous paragraph, but with the divisor $S$ now of degree $g=$ number of interior lattice points in $N$-1. Let $\mathcal S_N'$ be the subspace where $P(z,w)$ satisfies
\begin{enumerate}
\item $P(1,1)=0$ and the point $(1,1)$ is a node;
\item $\sigma:(z,w) \mapsto (\frac{1}{z},\frac{1}{w})$ is an involution on $C$,
\end{enumerate}
and the divisor $S$ satisfies
$$
S+\sigma(S)-q_1-q_2 \equiv K_{\hat{C}},
$$
where $\hat{C}$ is the normalization of $C$, $q_1,q_2$ are the points in the fiber of the node at $(1,1)$ and $K_{\hat{C}}$ is the canonical divisor class on $\hat{C}$.
Our main result is the following complete classification of biperiodic planar resistor networks in terms of their spectral data:
\begin{theorem}\label{thm1}
The spectral map is a birational isomorphism $\mathcal{R}_N^0 \ra \mathcal S_N'$. 
\end{theorem}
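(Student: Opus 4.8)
\noindent\emph{Proof strategy.} The plan is to realize the resistor spectral map as the restriction to a symmetric locus of the dimer spectral map, which is already a birational isomorphism by Goncharov--Kenyon, with explicit theta-function inverse by Fock, and then to show that self-adjointness of the Laplacian cuts out exactly $\mathcal S_N'$. Using the embedding of a resistor network into the dimer model that realizes $\mathcal R_N^0$ as a Lagrangian subvariety of $\mathcal X_N^0$, the Laplacian $\Delta(z,w)$ and its characteristic polynomial $P(z,w)=\det\Delta(z,w)$ are recovered from the Kasteleyn data, so it is enough to prove: (i) the spectral map sends $\mathcal R_N^0$ into $\mathcal S_N'$; and (ii) Fock's inverse, restricted to $\mathcal S_N'$, lands in $\mathcal R_N^0$; with both maps dominant. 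Together with birationality of the dimer spectral map this gives the theorem.

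For (i) I would first write the map explicitly: to $(G,c)$ associate $C=\{P=0\}$, the Bloch divisor $S$ given by the pole divisor of the normalized kernel section $\psi(z,w)$ solving $\Delta(z,w)\psi=0$, and the parameterization $\nu$ of the points at infinity indexed by the zig-zag paths; invariance under $Y-\Delta$ moves is then checked locally so that the map descends to $\mathcal R_N^0$. The defining conditions of $\mathcal S_N'$ follow from self-adjointness. Symmetry of the conductances gives $\Delta(z,w)^{\mathsf T}=\Delta(1/z,1/w)$, hence $P(z,w)=P(1/z,1/w)$ up to a monomial, so $\sigma$ is an involution of $C$ (condition 2). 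Since $\Delta(1,1)$ is the ordinary graph Laplacian, with kernel and cokernel both spanned by the constant vector, we get $P(1,1)=0$; the symmetry forces the vanishing order of $P$ at $(1,1)$ to be even, and a genericity condition on the conductances makes the leading quadratic form nondegenerate, so $(1,1)$ is a node (condition 1).

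The divisor relation is the heart of the matter. Writing $\phi(z,w)$ for the left kernel, $\phi^{\mathsf T}\Delta(z,w)=0$, self-adjointness gives $\phi(z,w)=\psi(1/z,1/w)$ up to normalization, so the pole divisor of $\phi$ is $\sigma(S)$. I would then build a meromorphic differential $\eta$ on $\hat C$ from the pairing $\phi^{\mathsf T}(d\Delta)\psi$ with a suitable normalization, whose zeros record $S$ and $\sigma(S)$ and whose polar part is supported at the two branches $q_1,q_2$ over the node (where $\psi$ and $\phi$ both specialize to the constant vector); a residue and degree computation then identifies $(\eta)=S+\sigma(S)-q_1-q_2$, yielding $S+\sigma(S)-q_1-q_2\equiv K_{\hat C}$. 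This computation, together with the matching of $\deg S=g-1$ with the geometric genus of $\hat C$, is the step I expect to be the main obstacle.

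For (ii) and birationality, I would restrict Fock's inverse to $\mathcal S_N'$ and show that conditions (1)--(2) force the reconstructed Kasteleyn weights to descend to a symmetric conductance function, i.e. a genuine point of $\mathcal R_N^0$, while the divisor relation guarantees that the reconstructed eigenfunction is the kernel of a self-adjoint Laplacian rather than of a general dimer operator. Injectivity follows because a network is determined up to conjugation by $\Delta(z,w)$, which is recovered from $(C,S,\nu)$ through the eigenfunction, and dominance follows from a dimension count matching $\dim\mathcal R_N^0$ (half of $\dim\mathcal X_N^0$, as $\mathcal R_N^0$ is Lagrangian) with $\dim\mathcal S_N'$ (the $\sigma$-fixed, node-constrained locus in the space of spectral triples). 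Checking that the symmetry and divisor constraints halve the dimension exactly, and that the inverse respects the resistor locus, completes the proof.
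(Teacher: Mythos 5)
Your overall strategy---deduce the theorem by restricting the Goncharov--Kenyon/Fock dimer correspondence to a symmetric locus---has a genuine gap at its foundation: it presupposes a precise compatibility between the Laplacian spectral data of $(G,c)$ and the Kasteleyn spectral data of the associated bipartite graph $\Gamma_G$. Temperley's bijection (Theorem \ref{temperley}) does identify the characteristic polynomials, hence the curves $C$; but the resistor divisor $S$ is cut out by a section of $i^*\mathrm{Coker}\,\Delta$ and has degree $g=(\text{interior points of }N)-1$, while the dimer divisor has degree $g+1$, and how the two divisors correspond under the Temperley embedding $\mathcal{R}_N^0\subset\mathcal{X}_N^0$ is exactly what the paper leaves as an open conjecture in its final section (the conjectural answer being $(C,S,\nu)\mapsto(C,S+(1,1),\nu)$). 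Without this compatibility you can neither assert that the resistor spectral map ``is the restriction'' of the dimer one, nor conclude that Fock's inverse, restricted to $\mathcal{S}_N'$, reconstructs weights gauge-equivalent to a conductance function; your injectivity and dominance claims inherit the same circularity, since they quote the dimer birationality through this unproven identification. Note also that even granting the identification, your step (ii) would require converting Fock's Jacobian theta functions on the $\sigma$-symmetric locus into Prym-theoretic data, a nontrivial reduction. The paper avoids all of this by constructing the inverse directly and independently of the dimer result: the conductances are given by the explicit Prym theta/prime-form formula (\ref{invmap}), and the Laplacian equation for the reconstructed sections $\psi_v$ is verified by Fay's quadrisecant identity (Theorem \ref{fqi}) rather than the trisecant identity underlying the dimer case.

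The two steps you flag or assert in the forward direction are also where the paper does real work, and your sketch does not supply it. For the divisor relation $S+\sigma(S)-q_1-q_2\equiv K_{\hat C}$, your pairing differential is essentially the paper's $\omega=Q(z,w)\,dz/\bigl(zw\,\partial P/\partial w\bigr)$ with $Q$ the minor at $v_0$, and the finite part of its divisor comes out as you say; but the argument collapses unless one shows $\omega$ has neither zeros nor poles at the points at infinity. The paper proves this combinatorially: every external OCRSF is a union of cycles (Corollary \ref{cyc}, via Lemmas \ref{crsfextremal} and \ref{crsfexternal}), so the Newton polygon of $Q$ lies strictly inside $N$; a residue-plus-degree count alone does not rule out cancelling zeros and poles at infinity. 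Similarly, for the node: evenness of $P$ under $\sigma$ gives the vanishing of first derivatives at $(1,1)$, but ``generic nondegeneracy of the Hessian'' needs a witness to be nonvacuous---the paper supplies one via Kenyon's theorem that positive real conductances yield simple Harnack curves with a node at $(1,1)$ (Theorem \ref{harnackthm}), and then invokes openness of the node condition.
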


Along the way, we provide an explicit description of oriented cycle rooted spanning forests of $G$ (OCRSFs) whose homology classes are boundary lattice points of $N$ (Lemmas \ref{crsfextremal}, \ref{crsfexternal}), analogous to results for dimers in \cite{Bro12}, \cite{GK12}. In particular, we show that every OCRSF corresponding to a boundary lattice point is a union of cycles (Corollary \ref{cyc}).\\

We construct an explicit inverse spectral map (see (\ref{invmap})). A key player in the construction is the theta function on the Prym variety of $\hat{C}$. The $Y-\Delta$ transformation is described by Fay's quadrisecant identity (\cite{Fay89}). Further we show that the inverse map is compatible with $Y-\Delta$ transformations (Theorem \ref{ydcomp}).\\

\begin{figure}\label{amoeba}
\centering
\includegraphics[width=0.5\textwidth]{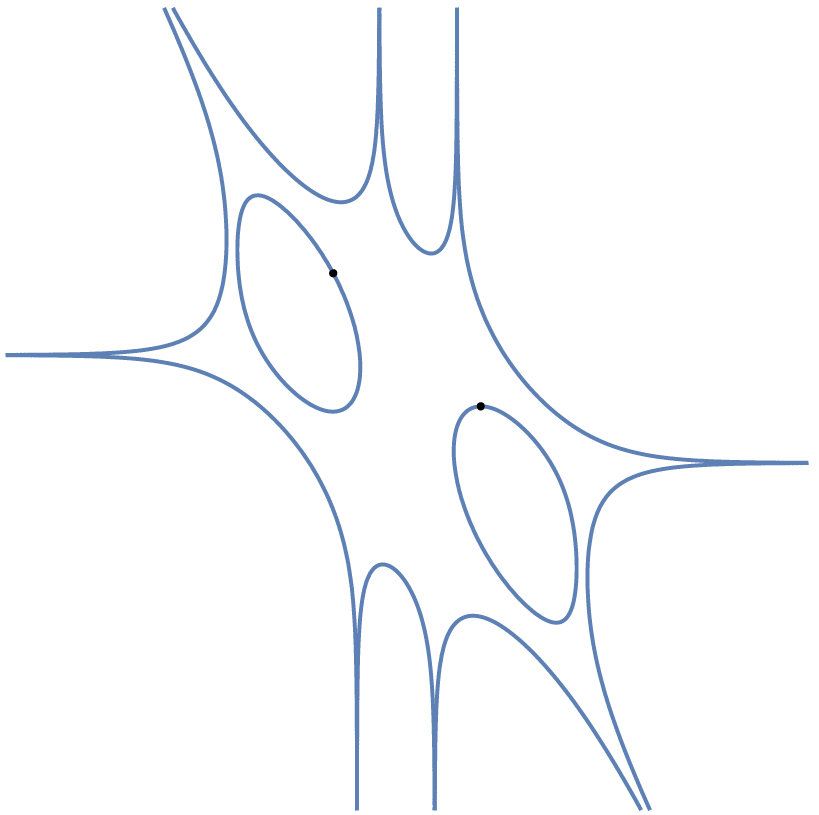}
\caption{The divisor $S$ on the amoeba of the spectral curve.}\label{amoeba}
\end{figure}
Since the $Y-\Delta$ move involves subtraction free rational expressions, the set of positive real valued points of the cluster variety is well defined, which we denote by $\mathcal R_N(\R_{\geq 0})$. This subspace is important for probabilistic applications. For a positive real valued point, the spectrum $(C,S,\nu)$ has the following additional properties (see \cite{K17}):
\begin{enumerate}
    \item $C$ is a simple Harnack curve (\cite{M}). Compact ovals (connected components) of $C$ are in bijection with interior lattice points of $N$.
    \item The oval corresponding to the origin is degenerated to a real node.
    \item $S$ has a point in each of the other compact ovals.
\end{enumerate}
The spectral curves of genus zero correspond to the isoradial networks studied in \cite{K02}. In this case, the inverse spectral map recovers Kenyon's results expressing the conductances in terms of tangents, and the quadrisecant identity reduces to the triple tangent identity. For a different generalization of isoradial networks to the case of the massive Laplacian on isoradial graphs, see \cite{BdeTR17}.\\

Consider the map $C(\C) \ra \mathcal A(C), (z,w) \mapsto (\log |z|,\log |w|) \subset \R^2$ from the $\C-$valued points of $C$ to its amoeba $\mathcal A(C)$. For a simple Harnack curve, this map is a homeomorphism from the compact ovals to the boundaries of the holes of the amoeba, and therefore provides a way to depict the divisor $S$ (see Figure \ref{amoeba} for an example, where the network is a $2 \times 1$ fundamental domain of the triangular lattice).\\

A sequence of $Y-\Delta$ moves that takes a graph $G$ to itself gives rise to a birational automorphism (called a cluster modular transformation) of $\mathcal R_N$, where $N$ is the Newton polygon of $G$. A cluster modular transformation provides a discrete integrable system on $\mathcal R_N$. For example, if we consider the honeycomb lattice, and do the $Y-\Delta$ move at the downward triangles, we obtain the cube recurrence studied by Carroll and Speyer (\cite{CS04}, see also \cite{GK12} section 6.3). We show that cluster modular transformations are linearized on the Prym variety of $C$ (Theorem \ref{lin}). In the case of positive real conductances, we may view this as moving each point along the boundary of the corresponding hole in the amoeba.\\

\textbf{Acknowledgements.} We thank Giovanni Inchiostro, Rick Kenyon and Xufan Zhang for helpful discussions.
\section{The dimer model}
\subsection{Line bundles with connection}

A \textit{surface graph} $\Gamma$ on a torus $\T$ is a graph embedded on $\T$ such that each face is contractible. A \textit{line bundle with connection} $(V,\phi)$ on $\Gamma$ is the data of a complex line $V_v \cong \C$ at each vertex of $\Gamma$ along with isomorphisms called \textit{parallel transport} $\phi_{v v'}:V_v \ra V_{v'}$ for each edge $\langle v,v' \rangle$ such that $\phi_{v' v}=\phi_{v v'}^{-1}$. Two line bundles with connection $(V,\phi)$ and $(V',\phi')$ are \textit{gauge equivalent} if there exists isomorphisms $\psi_v:V_v \ra V'_v$ such that for all edges, the following diagram commutes.
\[
\begin{tikzcd}
V_v \arrow[r, "\phi_{vv'}"] \arrow[d,"\psi_v"]
& V_{v'} \arrow[d, "\psi_{v'}" ] \\
V'_{v} \arrow[r, "\phi'_{vv'}" ]
& V'_{v'}
\end{tikzcd}
\]
If $L$ is an oriented loop in $\Gamma$, the \textit{monodromy} $m(L)$ of $(V,\phi)$ around $L$ is the composition of the parallel transports around $L$. A line bundle with connection is \textit{flat} if the monodromy around the boundary of any face of $\Gamma$ is trivial.\\
The moduli space of line bundles with connection on $\Gamma$ modulo isomorphisms is denoted $\mathcal{L}_\Gamma$. Let $\mathcal L_\Gamma^{\text{flat}}$ be the subspace of flat connections. The monodromies around loops in $\Gamma$ give rise to isomorphisms such that the following diagram commutes:
\[
\begin{tikzcd}
\mathcal L_\Gamma^{\text{flat}}  \arrow[hookrightarrow,r] \arrow[d,"\cong"]
& \mathcal{L}_\Gamma \arrow[d, "\cong" ] \\
H^1(\T,\C^*) \arrow[hookrightarrow,r]
& H^1(\Gamma,\C^*)
\end{tikzcd}
\]

A \textit{dimer cover} (or \textit{perfect matching}) of $\Gamma$ is a collection of edges of $\Gamma$ such that every vertex is adjacent to a unique edge. A dimer cover $M$ on $\Gamma$ gives a $1$-chain $\omega_M$ on $\Gamma$. If $M_0$ is another dimer cover, $\omega_M-\omega_{M_0}$ is a cycle and therefore determines a homology class in $H_1(\Gamma,\Z)$. Under the projection $H_1(\Gamma,\Z) \ra H_1(\T,\Z)$, we obtain a homology class $[M]\in H_1(\T,\Z).$ The Newton polygon of the dimer model is 
$$
N:=\text{Conv }\{[M] \in H_1(\T,\Z): M \text{ is a dimer cover}\}.
$$
$N$ depends on the choice of reference dimer cover $M_0$. Changing the reference matching corresponds to translating the polygon $N$.
$M \mapsto [M]$ gives a well defined map from the set of dimer covers to the integer lattice points in $N$. 

\subsection{Zig-zag paths on bipartite graphs and minimality}

A \textit{zig-zag path} on a bipartite torus graph $\Gamma$ is a path that turns maximally right at black vertices and maximally left at white vertices. Let us denote by $\mathcal Z_\Gamma$ the set of all zig-zag paths in $\Gamma$. We say that $\Gamma$ is \textit{minimal} if in the universal cover $\tilde{\Gamma}$, zig-zag paths have no self intersections and no pairs of zig-zag paths oriented in the same direction meet twice. \\
Suppose $\Gamma$ is a minimal bipartite graph on a torus. Each path $\alpha \in Z_{\Gamma}$ gives us a homology class $[\alpha] \in H_1(\T,\Z)$ which is an integral pimitive vector on a side of the Newton polygon $N$. The zig-zag paths taken in cyclic order correspond to cyclically ordered primitive integral vectors in  the boundary of the Newton polygon. Therefore an edge of $N$ corresponds to a family of zig-zag paths, each with homology class equal to the primitive integral edge vector of the edge.

\subsection{External dimer covers}
In this section, we collect some results about dimer covers from \cite{Bro12},\cite{GK12}. Let $\Gamma$ be a minimal bipartite graph on a torus.
We say that a dimer cover $M$ is \textit{extremal} if $[M]$ is a vertex of the Newton polygon.
If $b$ is any black vertex in $\Gamma$, we define the \textit{local zig-zag fan} $\Sigma_b$ at $b$ to be the complete fan of strongly convex rational polyhedral cones in $H_1(\T,\Z)$ whose rays are generated by  homology classes of those zig-zag paths in $\Gamma$ that contain $b$. 
\begin{lemma}[\cite{Bro12}]\label{span}
The rays corresponding to two families of zig-zag paths span a two dimensional cone $\Sigma_v$ if and only if there is an edge incident to $v$ at which the two families intersect.
\end{lemma}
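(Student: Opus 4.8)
The plan is to reduce the statement to a local analysis at $v$ and then convert the global minimality hypothesis into angular control of the homology directions. First I would record the bookkeeping at $v$: if $e_1,\dots,e_d$ are the edges incident to $v$ listed in counterclockwise cyclic order, then each traversal of $v$ by a zig-zag path uses a consecutive pair $(e_i,e_{i+1})$ of edges, i.e.\ a corner at $v$, so the zig-zag paths through $v$ are indexed by these corners. Each edge $e_i$ is then contained in exactly two zig-zag paths, namely the corner-paths turning in the two corners flanking $e_i$, and they traverse $e_i$ in opposite directions. With this dictionary, the phrase ``the two families intersect at an edge incident to $v$'' becomes the combinatorial condition that the two families contain the two corner-paths sharing a common edge $e_i$ at $v$; the content of the lemma is that this sharing is equivalent to the corresponding rays being adjacent in $\Sigma_v$.

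The heart of the argument is a monotonicity statement: as one rotates counterclockwise through the corners at $v$, the homology classes of the corresponding zig-zag paths, regarded as rays in $H_1(\T,\R)\cong\R^2$, also rotate monotonically around the origin. To make sense of this I would first use minimality to record the standard structural fact that in the universal cover $\tilde\Gamma$ each zig-zag path $\alpha$ lifts to an embedded bi-infinite path whose two ends escape to infinity in the directions $\pm[\alpha]$, so that each strand through the lift $\tilde v$ of $v$ carries a well-defined asymptotic direction equal to its homology class. I would then argue that the cyclic order of these asymptotic directions must agree with the cyclic order of the corners at $\tilde v$: if two strands left a neighbourhood of $\tilde v$ in one cyclic order but were asymptotic to directions occurring in the opposite cyclic order, then a Jordan-curve argument, applied to two embedded curves that are ``swapped'' near $\tilde v$ but head off toward infinity in the swapped directions, would force either a self-intersection of one lift or a second crossing of two co-oriented lifts, contradicting minimality (no self-intersections, and no two equally-oriented zig-zag paths meeting twice). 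I expect this step to be the main obstacle, since it is precisely where the global reducedness of $\Gamma$ must be turned into local angular information; it is essentially the assertion that a minimal graph is ``properly ordered'' at each vertex.

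Granting monotonicity, the two implications follow quickly. Because the asymptotic directions are sorted around the circle, the distinct directions occurring among the corner-paths appear in consecutive blocks, and two families $F,F'$ are represented by adjacent rays of $\Sigma_v$ exactly when their blocks are neighbours, i.e.\ when there is a single corner at which a path of $F$ gives way to a path of $F'$. That corner is bounded by a single edge $e_i$ incident to $v$ shared by the two paths, which is exactly an edge at which $F$ and $F'$ intersect; conversely, an edge $e_i$ shared by corner-paths of $F$ and $F'$ exhibits them as the families flanking a single block boundary, hence as adjacent rays. Finally, completeness of $\Sigma_v$, which is part of its definition and is guaranteed by the monotone winding, ensures that adjacent pairs of rays are precisely the maximal two-dimensional cones. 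Thus ``spanning a two-dimensional cone of $\Sigma_v$'' and ``intersecting at an edge incident to $v$'' single out the same pairs of families, which is the assertion of the lemma.
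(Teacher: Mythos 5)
First, a remark on the comparison target: the paper never proves this lemma at all --- it sits in a section that explicitly collects results from \cite{Bro12} and \cite{GK12}, so the only thing to measure your argument against is Broomhead's proof. Your overall reduction is the right one: the lemma does follow, by exactly the corner/block bookkeeping you describe, from the ``proper ordering'' property (Gulotta, Broomhead) that the cyclic order of the strands through $v$ agrees, with winding number one, with the cyclic order of their homology directions. That dictionary, and the deduction of the lemma from monotonicity, are correct.

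The gap is precisely at the step you yourself flag as the main obstacle, and the Jordan-curve sketch cannot close it, because it only controls \emph{pairs} of lifts, and pairwise crossing information is provably insufficient here. What pairwise minimality does give is this: two strands sharing an edge at $v$ cross exactly once, namely at that edge; the local picture of a crossing at a shared edge always has the same sign; and since the total signed intersection number of two properly embedded quasi-lines equals the sign of $[\alpha]\wedge[\beta]$, consecutive corners at $v$ carry directions with $u_k\wedge u_{k+1}>0$, i.e.\ the direction turns strictly left by an angle in $(0,\pi)$ at each corner step. But that alone does not force winding number one: for $\deg v\geq 5$ the direction sequence could a priori wind around twice, which is exactly the situation in which some strand $\beta$ through $v$ has direction strictly inside the cone spanned by two edge-sharing strands $\alpha_i,\alpha_{i+1}$. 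And that configuration is consistent with every pairwise constraint: $\beta$ crosses $\alpha_i$ exactly once, on the branch before the shared edge, with positive sign; it crosses $\alpha_{i+1}$ exactly once, on the branch after it, with negative sign; it enters and exits the region containing $v$ once each; no pair of lifts crosses twice and nothing self-intersects. So the contradiction you are counting on (``a second crossing of two co-oriented lifts'') never materializes; ruling out winding $\geq 2$ requires a global input beyond pairwise crossing counts. In Broomhead's treatment that input is geometric consistency (equivalently, by Kenyon--Schlenker, an isoradial/rhombic embedding): each strand through $v$ has a rhombus/track angle $\chi_k$, these angles wind exactly once around $v$ for purely geometric reasons, each homology direction $u_k$ lies in the open half-plane determined by $\chi_k$, and since the offsets $\delta_k=u_k-\chi_k$ live in a contractible arc they contribute zero winding, so the $u_k$ inherit winding one from the $\chi_k$. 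A secondary, fixable issue: you invoke only ``no two co-oriented lifts meet twice,'' but you also need to exclude edge-sharing strands with \emph{antiparallel} directions (otherwise the ``cone'' they span is a line, not a strongly convex cone); that exclusion uses translation-invariance of lifts together with the strong form of minimality (any two lifts cross at most once), which is not literally the clause you cite.
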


The \textit{global zig-zag fan} of $\Gamma$ is the fan whose rays are generated by the homology classes of all zig-zag paths on $\Gamma$. The identity map in $H_1(\T,\Z)$ defines a map of fans $i_b:\Sigma \ra \Sigma_b$. If $\sigma$ is any two dimensional cone in $\Sigma$, $i_b(\sigma)$ is contained in a unique two dimensional cone in $\Sigma_b$ which we call $\sigma_b$. $\sigma_b$ corresponds to a unique edge incident to $b$, given by the intersection of the two zig-zag paths through $b$ whose rays in $\Sigma_b$ form the boundary of $\sigma_b$. Define the 1-chain $\omega(\sigma_b)$ to be $1$ on the edge $\langle w,b \rangle$ and $0$ on all other edges. Define 
$$
\omega(\sigma)=\sum_{b \in V(\Gamma) \text{ black }}\omega(\sigma_b). 
$$
Two dimensional cones in $\Sigma$ are in bijection with vertices of the Newton polygon: If $\sigma$ is a two dimensional cone in $\Sigma$, let $E_1$ and $E_2$ be the edges of $N$ whose associated rays form the boundary of $\sigma$ in $\Sigma$. Then $E_1$ and $E_2$ occur in cyclic order and therefore there is a vertex $V$ between them in $N$.

\begin{theorem}[\cite{Bro12},\cite{GK12}]\label{extremaldimer}
$\omega_V:=\omega(\sigma)$ is the unique extremal dimer cover associated to the vertex $V$ of $N$ that corresponds to $\sigma$.
\end{theorem}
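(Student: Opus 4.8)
The plan is to prove the statement in three stages: first that $\omega(\sigma)$ is a perfect matching, then that its homology class is the vertex $V$ (so that it is extremal), and finally that it is the only extremal matching with this class. By construction $\omega(\sigma)=\sum_b\omega(\sigma_b)$ places exactly one edge at each black vertex, so every black vertex is covered once, and the real content of the first stage is to show that every white vertex is covered exactly once. I would run the mirror construction at white vertices: form the local fan $\Sigma_w$, let $\sigma_w$ be the unique $2$-dimensional cone of $\Sigma_w$ containing $i_w(\sigma)$, and let this select the corresponding edge at $w$; since the turning rule at white vertices is opposite to that at black vertices, this is a genuine mirror of the black construction. The key lemma is then a local consistency statement: an edge $e=\langle w,b\rangle$ is selected by the black rule at $b$ if and only if it is selected by the white rule at $w$. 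Granting this, the black-selected and white-selected edge sets coincide, and since each vertex selects exactly one edge, this common set covers every vertex exactly once, that is, it is a perfect matching, necessarily equal to $\omega(\sigma)$.

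The proof of this key lemma is where I expect the main obstacle to lie. Each edge $e=\langle w,b\rangle$ lies on exactly two zig-zag paths $\zeta_1,\zeta_2$, and both pass through $b$ and through $w$. By Lemma \ref{span}, since $\zeta_1,\zeta_2$ meet at the edge $e$ incident to $b$ (respectively to $w$), their rays $[\zeta_1],[\zeta_2]$ are adjacent in $\Sigma_b$ (respectively $\Sigma_w$) and bound the $2$-dimensional cone corresponding to $e$ in each fan. The difficulty is that two rays bound two opposite cones, so one must show that the cone cut out at $b$ and the cone cut out at $w$ are the \emph{same} cone, not complementary ones. I would pin this down by orienting the zig-zag paths and analysing the strand picture at the crossing $e$: the two strands traverse $e$ in opposite directions, so $b$ and $w$ occupy opposite local regions of the crossing, and a direct check of the right-turn/left-turn conventions shows that the corner at $b$ and the corner at $w$ subtend the same cone spanned by $[\zeta_1]$ and $[\zeta_2]$. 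This orientation bookkeeping, tying the combinatorics of corners at the two endpoints to the angular position of the rays in $H_1(\T,\R)$, is the delicate step.

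For the homology class I would pass to a linear functional. Represent a covector $\theta\in H^1(\T,\R)$ lying in the interior of the normal cone of the vertex $V$ by a closed edge-cochain $\tilde\theta$; then for any matching $M$ one has $\sum_{e\in M}\tilde\theta(e)=\langle\theta,[M]\rangle+\text{const}$, because contractible alternating cycles contribute nothing and only the periods of $\tilde\theta$ survive. Choosing $\tilde\theta$ so that the $\tilde\theta$-largest edge at each black vertex is exactly the edge selected by $\sigma_b$ — which amounts to matching the cone condition $\sigma\subset\sigma_b$ against the linear order induced by $\tilde\theta$ — makes $\omega(\sigma)$ maximize every local term at once, and hence the global sum. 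Therefore $\omega(\sigma)$ maximizes $\langle\theta,[\,\cdot\,]\rangle$ over all matchings, so its class is the $\theta$-maximal vertex of $N$, namely the vertex $V$ corresponding to $\sigma$, and in particular $\omega(\sigma)$ is extremal.

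Finally, for uniqueness, let $M$ be any extremal matching with $[M]=V$. Then $M\mathbin{\triangle}\omega(\sigma)$ is a disjoint union of alternating cycles, and since $[M]=[\omega(\sigma)]=V$ their total homology class vanishes. For the generic $\theta$ above both matchings are $\theta$-maximal, so each cycle is $\theta$-neutral, and together with vanishing homology this forces every cycle in the symmetric difference to be contractible. Here I would invoke minimality of $\Gamma$: a contractible alternating cycle bounds a disk, and tracing the zig-zag paths that enter it produces either a self-intersecting zig-zag path or two zig-zag paths oriented in the same direction meeting twice, contradicting minimality. Hence the symmetric difference is empty and $M=\omega(\sigma)$. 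This minimality input is essential, and is the second place, after the key lemma, where the hypotheses genuinely do work.
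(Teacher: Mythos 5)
The paper does not actually prove this theorem: it is imported from \cite{Bro12} and \cite{GK12} (the section explicitly says it ``collects'' results), so there is no internal proof to compare against. The closest internal relative is the proof of Corollary \ref{cyc}, which uses precisely your stage-one device --- a mirror fan at the other endpoint combined with Lemma \ref{span} --- for the OCRSF analogue. Your first stage is correct, and is in fact easier than you make it: the two zig-zag paths $\zeta_1,\zeta_2$ through an edge $e=\langle w,b\rangle$ both pass through $b$ and through $w$, so by Lemma \ref{span} the convex cone $c_e$ spanned by $[\zeta_1],[\zeta_2]$ is a two-dimensional cone of \emph{both} $\Sigma_b$ and $\Sigma_w$; since two non-collinear rays span a unique strongly convex cone, there is no ``opposite cone'' ambiguity, and $e$ is selected at $b$ iff $\sigma\subseteq c_e$ iff $e$ is selected at $w$. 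The delicate orientation bookkeeping you anticipate is not needed; it is already encoded in Lemma \ref{span} and in the strong convexity of the cones of the fan.

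The genuine gaps are in stages two and three. Stage two rests entirely on the existence of a \emph{closed} cochain $\tilde\theta$ in the class $\theta$ whose strict maximum at every black vertex is the $\sigma_b$-selected edge. You assert this (``choosing $\tilde\theta$ so that\dots'') but give no construction, and this is not a routine step: such a cochain is exactly a dual certificate of the optimality you are trying to establish, so producing it is essentially equivalent to the theorem itself. Closedness is a global constraint, while the maximality conditions are imposed locally at every black vertex simultaneously; reconciling the two is where the cited proofs do their real work (via the combinatorics of how paths may cross zig-zag flows). Stage three then contains a claim that is false as stated: minimality does \emph{not} forbid contractible alternating cycles between two dimer covers. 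In the square lattice, which is minimal, two matchings differing by a face flip have a contractible alternating $4$-cycle as symmetric difference, and no zig-zag path self-intersects or crosses another twice on that account. The correct statement --- that no contractible alternating cycle can pass through the extremal matching $\omega(\sigma)$ --- must use the fan-selection property of $\omega(\sigma)$, which your disk-tracing argument never invokes; as written it would prove that any two matchings of equal homology class coincide, which is absurd. Note finally that the two gaps are linked: if the stage-two cochain existed with strict local maxima, uniqueness would be immediate (a matching $M$ with $[M]=[\omega(\sigma)]$ has the same total $\tilde\theta$-weight, hence must use the maximal edge at every black vertex, hence equals $\omega(\sigma)$), so your stage three is either redundant or unsupported.
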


We say that a dimer cover $M$ is \textit{external} if $[M]$ is a boundary lattice point of $N$. To a zig-zag path $\alpha$ we associate a 1-form $\omega_\alpha$ that is $1$ on edges $e$ in $\alpha$ that are oriented the same way as $\alpha$ and $0$ on edges not in $\alpha$. If $M$ is external, $[M]$ lies on an edge $E$ of $N$, which corresponds to a family of zig-zag paths $\{\alpha_k\}$. Let $E=\langle V_1,V_2\rangle$, where $V_1,V_2$ are vertices of $N$ such that $V_2$ is the vertex after $V_1$ when the boundary of $N$ is traversed counterclockwise. 
\begin{theorem}[\cite{Bro12},\cite{GK12}]\label{externaldimer}
Let $A$ be a subset of the family of zig-zag paths $\{\alpha_k\}$ corresponding to $E$. The external dimer covers on $E$ are of the form 
$$
\omega_A:=\omega_{V_1}+\sum_{\alpha_k \in A}\omega_{\alpha_k}.
$$
In particular, $\omega_{V_2}=\omega_{V_1}+\sum_{k}\omega_{\alpha_k}$, and the number of dimer covers corresponding to a boundary lattice point of $N$ is a binomial coefficient.
\end{theorem}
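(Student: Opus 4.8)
The plan is to describe all external dimer covers on $E$ as flips of the extremal cover $\omega_{V_1}$ along subsets of the zig-zag family $\{\alpha_k\}$, and the starting point is a local analysis at each black vertex. Recall from Theorem \ref{extremaldimer} that $\omega_{V_1}=\sum_b\omega(\sigma^{(1)}_b)$ and $\omega_{V_2}=\sum_b\omega(\sigma^{(2)}_b)$, where $\sigma^{(1)},\sigma^{(2)}$ are the two two-dimensional cones of $\Sigma$ adjacent to the ray $\rho_E$ determined by $E$. At a black vertex $b$ the map $i_b:\Sigma\ra\Sigma_b$ sends $\rho_E$ either into the interior of a cone of $\Sigma_b$ — exactly when $b$ lies on no zig-zag path of the family, in which case $\sigma^{(1)}_b=\sigma^{(2)}_b$ and the two covers agree at $b$ — or onto a ray of $\Sigma_b$, exactly when $b$ lies on a (then unique) $\alpha_k$, in which case $\sigma^{(1)}_b$ and $\sigma^{(2)}_b$ are the two cones flanking that ray and correspond to the two edges of $\alpha_k$ at $b$. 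First I would record the consequence: $\omega_{V_1}$ and $\omega_{V_2}$ differ precisely on the edges of $\bigcup_k\alpha_k$, and at every black vertex of $\alpha_k$ the cover $\omega_{V_1}$ selects the edge oriented oppositely to $\alpha_k$ while $\omega_{V_2}$ selects the one oriented along it. Tracing this around the cycle shows $\alpha_k$ is alternating for $\omega_{V_1}$, with $\omega_{V_1}\cap\alpha_k$ equal to its backward edges; this is precisely the identity $\omega_{V_2}=\omega_{V_1}+\sum_k\omega_{\alpha_k}$.

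Next I would check that the flips are independent across the family. Minimality forces the $\alpha_k$, all of homology class the primitive edge vector $v$ and hence parallel, to be pairwise disjoint simple closed curves, and Lemma \ref{span} ensures two paths of one family share no edge. Hence for any $A\subseteq\{\alpha_k\}$ the chain $\omega_A=\omega_{V_1}+\sum_{\alpha_k\in A}\omega_{\alpha_k}$ is again a dimer cover, obtained from $\omega_{V_1}$ by replacing the backward edges of each $\alpha_k\in A$ by its forward edges, with $[\omega_A]=[V_1]+|A|\,v$ the lattice point of $E$ at distance $|A|$ from $V_1$. Disjointness makes distinct subsets give distinct covers, so the covers over the $j$-th lattice point of $E$ number \emph{at least} $\binom{m}{j}$, where $m$ is the lattice length of $E$, equal to the number of zig-zag paths in the family since $\sum_k[\alpha_k]=V_2-V_1=mv$.

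The substance is the converse: every external cover on $E$ has this form. Given $M$ with $[M]$ on $E$, I would analyze $M\triangle\omega_{V_1}$, a disjoint union of simple cycles $C_1,\dots,C_r$, each alternating for $\omega_{V_1}$. Flipping a single $C_t$ gives a dimer cover, so $[V_1]+[C_t]\in N$; since $V_1$ is a vertex of $N$, each $[C_t]$ lies in the tangent cone of $N$ at $V_1$, a non-negative combination of the two edge directions there. As $\sum_t[C_t]=[M]-[V_1]$ is a non-negative multiple of $v$ lying along $E$, the components in the other edge direction must vanish, so each $[C_t]$ is a non-negative multiple of $v$; being an embedded curve on $\T$, its class is primitive or zero, hence $v$ or $0$. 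A null-homologous $C_t$ would flip to a second cover of class $[V_1]$, contradicting the uniqueness of the extremal cover in Theorem \ref{extremaldimer}; thus every $C_t$ has class $v$.

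The main obstacle is the final identification: an $\omega_{V_1}$-alternating embedded cycle $C$ of class $v$ must itself be one of the $\alpha_k$, and here minimality is essential. I would argue that the backward edges of $C$ force it, vertex by vertex, to turn maximally and hence to coincide with a zig-zag path of the family; equivalently, the cylinders cut from $\T$ by the disjoint parallel curves $\{\alpha_k\}$ leave no room for an alternating $v$-cycle other than the $\alpha_k$ themselves, since any transversal excursion would make $C$ meet some zig-zag path more often than its intersection number allows. Granting this, $M\triangle\omega_{V_1}=\bigsqcup_{\alpha_k\in A}\alpha_k$ for a unique $A$, so $M=\omega_A$; combined with the forward count, the covers over the $j$-th lattice point are exactly the $\binom{m}{j}$ covers $\omega_A$ with $|A|=j$, giving both the displayed formula and the binomial count. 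I expect pinning down the absence of exotic alternating $v$-cycles to be the crux, and the one place where the full strength of minimality — no self-intersections and parallel paths meeting at most once — is used.
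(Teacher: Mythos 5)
First, a point of comparison: the paper never proves Theorem \ref{externaldimer} at all; Section 2.3 explicitly collects it as background from \cite{Bro12} and \cite{GK12}, so there is no internal proof to measure your argument against, and your proposal must stand on its own. Most of it does: the local-fan analysis identifying where $\omega_{V_1}$ and $\omega_{V_2}$ differ, the pairwise disjointness of the family $\{\alpha_k\}$ extracted from minimality, the conclusion that each $\omega_A$ is a dimer cover of class $[V_1]+|A|\,v$ (alternation at white vertices, which you skip, follows by counting: each black vertex of $\alpha_k$ contributes exactly one matched edge, so exactly half the edges of $\alpha_k$ are matched and each white vertex gets exactly one), and the reduction of the converse --- via the tangent-cone argument at $V_1$ and the uniqueness in Theorem \ref{extremaldimer} --- to the single claim that every $\omega_{V_1}$-alternating embedded cycle $C$ with $[C]=v$ is one of the $\alpha_k$.

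That claim is precisely where your proof stops being a proof, and neither sketch you offer for it can be completed as stated. (i) Alternation does not force maximal turning: an alternating cycle is constrained only in which of its edges are matched, not in how it turns at a vertex, so any argument must exploit the specific structure of $\omega_{V_1}$ --- namely that its matched edge at each black vertex $b$ is the edge singled out by the cone $\sigma^{(1)}_b$ in $\Sigma_b$ --- and you never bring this to bear. (ii) The intersection-count argument is vacuous: minimality bounds intersections between pairs of zig-zag paths in the universal cover, but $C$ is not a zig-zag path, so there is no bound on how often $C$ may meet a zig-zag path and hence nothing for a ``transversal excursion'' to violate; homology only gives parity or lower bounds, never the upper bound you need. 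Note moreover that the missing claim is essentially equivalent to the theorem itself: flipping $C$ produces a cover of class $[V_1]+v$, and the theorem at that lattice point asserts exactly that such a cover equals $\omega_{V_1}\,\triangle\,\alpha_k$ for some $k$. So what is left unproven is the substance of the statement, not a technicality. To close it you would need an argument of the following kind: at each black vertex $b$ of $C$, one of the two $C$-edges is the matched edge of $\omega_{V_1}$, and the constraint $[C]=v$ together with the position of the ray $\rho_E$ in the local fan $\Sigma_b$ (via Lemma \ref{span}) forces the other $C$-edge to be cyclically adjacent to it, pinning $C$ vertex by vertex to a zig-zag path of the family; this is the step the cited sources actually carry out, and it is absent here.
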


\section{Resistor networks}
A \textit{resistor network} is a pair $(G,c)$ where $G$ is a surface graph on $\T$ and $c:E(G)\ra \C^*$ is a function defined modulo global multiplication by a non-zero scalar.

Associated to $G$ is a bipartite graph $\Gamma_G$ obtained by superposing $G$ and its dual graph $G^\vee$. The vertices and faces of $G$ become the black vertices of $\Gamma_G$ and the edges of $G$ become the white vertices of $\Gamma_G$. Applying Euler's formula on $\T$ to $G$ we see that $\Gamma_G$ has equal number of white and black vertices.
\subsection{The resistor network cluster variety}
A conductance function $c$ determines a line bundle with connection $V(c)$ on $\Gamma_G$ as follows:\\
The weight assigned to an edge of $\Gamma_G$ incident to a vertex of $G$ is the conductance of that edge in $G$. The weight of an edge incident to a face of $G$ is $1$. Composing the isomorphisms $\mathcal L^{\text{flat}}_G \cong H^1(\T,\C^*) \cong \mathcal L^{\text{flat}}_{\Gamma_G}$, we see that the moduli spaces of line bundles with flat connections on $G$ and $\Gamma_G$ are canonically isomorphic. $\mathcal{R}_G \subset \mathcal L_G$ of line bundles of the form $V(c)\otimes i$, where $c$ is a conductance function on $G$ and $i\in \mathcal L^{\text{flat}}_G$ is a closed subvariety.

A $Y-\Delta$ transformation(\cite{Kenn1899})$G_1 \ra G_2$ is given by replacing a $Y$ in the graph $G_1$ with a triangle as shown in Figure \ref{et}. Any two minimal resistor networks with Newton polygon $N$ are related by $Y-\Delta$ moves. A $Y-\Delta$ move $G_1 \ra G_2$ induces a birational map $\mathcal{L}_{\Gamma_{G_1}} \ra \mathcal{L}_{\Gamma_{G_2}}$.  Gluing the $\mathcal{L}_{\Gamma_{G}}$ using these birational maps, we obtain a cluster Poisson variety $\mathcal{X}_N$.

The birational map $\mathcal{L}_{\Gamma_{G_1}} \ra \mathcal{L}_{\Gamma_{G_2}}$ restricted to $\mathcal{R}_{G_1} \ra \mathcal{R}_{G_2}$ is given in the notation of Figure \ref{et} by
\begin{align*}
A=\frac{bc}{a+b+c},B=\frac{ac}{a+b+c},C=\frac{ab}{a+b+c}.
\end{align*}
Gluing the subvarieties $\mathcal{R}_{G}$ using these birational isomorphisms, we obtain a cluster subvariety $\mathcal{R}_N$ of $\mathcal{X}_N$. Quotienting by the moduli space of the flat connections, we get $\mathcal{R}_N^0 \subset \mathcal{X}_N^0$, called the \textit{resistor network cluster variety}. 

\subsection{The line bundle Laplacian}
Let $(G,c)$ be a resistor network and let $i \in \mathcal L^{\text{flat}}_G$. The line bundle Laplacian is the linear operator 
$\Delta=\Delta(c,i):\mathbb C^{V(G)} \ra \mathbb C^{V(G)}$ defined by 
$$
\Delta(f)(v):=\sum_{v' \sim v }c(v,v')(f(v)-i_{v'v}f(v')).
$$
An \textit{oriented cycle rooted spanning forest} (OCRSF) $\gamma$ of $G$ is a collection of edges of $G$ such that each connected component of $\gamma$ has the same number of vertices and edges (so that each connected component has a unique cycle), along with a choice of orientation for each cycle in $\gamma$. Since two distinct cycles in $\gamma$ cannot intersect, if $\eta$ is a cycle in $\gamma$, every cycle has homology class $\pm [\eta]$. The weight of an $OCRSF$ $\gamma$ is defined to be $wt(\gamma)=\prod_{e \in \gamma}c(e)$.  
\begin{theorem}[\cite{K10}]\label{pfnlap}
$$\text{det }\Delta = \sum_{ \text{OCRSFs } \gamma}wt(\gamma)\prod_{\text{Cycles }\eta \in \gamma}(1-m(\eta)),$$
where $m(\eta)$ is the monodromy of $i$ along the cycle $\eta$.
\end{theorem}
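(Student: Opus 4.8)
The plan is to prove this by the Leibniz expansion of the determinant together with a combinatorial identification of its terms with oriented cycle rooted spanning forests; this is the line bundle analogue of the matrix--forest theorem. Writing $\Delta$ as a matrix indexed by $V(G)$, its diagonal entry at $v$ is $\Delta_{vv}=\sum_{v'\sim v}c(v,v')$ and its off-diagonal entry is $\Delta_{vv'}=-c(v,v')\,i_{v'v}$ for $v\sim v'$. First I would write $\det\Delta=\sum_{\pi}\mathrm{sgn}(\pi)\prod_v\Delta_{v\pi(v)}$ and then expand each diagonal factor $\Delta_{vv}$ (occurring whenever $\pi(v)=v$) as a sum over the neighbours $v'\sim v$. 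Each resulting monomial is thus indexed by a permutation $\pi$ together with a choice, for every fixed point $v$ of $\pi$, of an out-neighbour $\sigma(v)\sim v$. Assigning to each vertex the out-edge $v\to\pi(v)$ when $\pi(v)\ne v$ and $v\to\sigma(v)$ when $\pi(v)=v$, every monomial determines a functional digraph on $V(G)$ in which each vertex has out-degree exactly one and every edge lies in $G$.

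Next I would invoke the standard fact that such functional digraphs are exactly the oriented cycle rooted spanning forests of $G$: each weakly connected component has a unique directed cycle, with the remaining edges forming trees oriented towards it. This groups the monomials of the expansion according to the underlying OCRSF $\gamma$. The key structural observation---and the first place care is needed---is a dichotomy: along any directed cycle of the functional digraph the edges are either all \textit{permutation edges} (arising from a nontrivial cycle of $\pi$) or all \textit{diagonal edges} (arising from the choice $\sigma$). Indeed, if $v_i\to v_{i+1}$ is a permutation edge, then $\pi(v_i)=v_{i+1}$, so $v_{i+1}$ cannot be a fixed point of $\pi$ (else injectivity of $\pi$ fails), forcing its out-edge to be a permutation edge as well; propagating around the cycle proves the claim. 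Since every non-fixed point of $\pi$ is recurrent, all tree edges are diagonal edges. Consequently, for a fixed OCRSF $\gamma$ the contributing monomials are obtained by choosing, independently for each cycle $\eta$ of $\gamma$, whether $\eta$ is realized by permutation edges or by diagonal edges, while its trees are always diagonal; the total therefore factors as a product over the trees and cycles of $\gamma$.

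It then remains to compute the per-component factors. The tree edges contribute only their conductances, giving a factor $\prod c(e)$ with no sign or monodromy. For a cycle $\eta=(v_1\to\cdots\to v_k\to v_1)$ I would compare its two realizations. The diagonal realization leaves all $v_j$ as fixed points of $\pi$ (sign $+1$) and contributes $+\prod_{\eta}c$. The permutation realization contributes the cycle sign $(-1)^{k-1}$ times $\prod_j\Delta_{v_j v_{j+1}}=(-1)^k\big(\prod_{\eta}c\big)m(\eta)$, where the product of the transport factors $i_{v_{j+1}v_j}$ is the monodromy $m(\eta)$ of $i$ around $\eta$; since $(-1)^{k-1}(-1)^k=-1$ independently of $k$, this equals $-\big(\prod_{\eta}c\big)m(\eta)$. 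Adding the two realizations yields the factor $\big(\prod_{\eta}c\big)\big(1-m(\eta)\big)$, and multiplying over all trees and cycles gives $\mathrm{wt}(\gamma)\prod_{\eta}(1-m(\eta))$. Summing over all OCRSFs $\gamma$ produces the stated formula. I expect the main obstacle to be exactly the sign-and-orientation bookkeeping in this last step: one must verify the dichotomy lemma carefully and fix the convention relating the transport factors $i_{v'v}$ to the monodromy so that the two cyclic realizations combine precisely into $1-m(\eta)$ rather than $1-m(\eta)^{-1}$ or $1+m(\eta)$; the underlying bijection between monomials and forests is then routine.
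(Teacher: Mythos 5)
The paper itself does not prove this statement: it is imported from Kenyon's work \cite{K10} (in the line-bundle case it is essentially Forman's determinant--forest theorem), and the proof there runs along exactly the route you take --- Leibniz expansion, identification of monomials with out-degree-one subgraphs, and cycle-by-cycle resummation. Your sign bookkeeping is right: $(-1)^{k-1}(-1)^{k}=-1$ independently of $k$, so no $1+m(\eta)$ can appear. Moreover, the worry you flag at the end, $1-m(\eta)$ versus $1-m(\eta)^{-1}$, is immaterial: the set of OCRSFs is closed under reversing the orientation of each cycle and $wt(\gamma)$ does not depend on orientations, so re-indexing the sum by flipping every cycle turns one convention into the other and both yield the stated formula.

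There is, however, one genuine gap: the claim that the functional digraphs arising from the expansion \emph{are exactly} the OCRSFs of $G$. This fails for $2$-cycles $u\to v\to u$ traversing a single edge of $G$ in both directions, which arise either from the transposition $(u\,v)$ in $\pi$ or from two fixed points with $\sigma(u)=v$ and $\sigma(v)=u$. Such a component has two vertices but only one edge of $G$, so it is not an OCRSF in the paper's sense (each component must have equally many vertices and edges), yet it does occur in the expansion; as written, your grouping step silently discards these terms. The repair is already contained in your per-cycle computation: since $i_{vu}=i_{uv}^{-1}$, the back-and-forth ``monodromy'' is trivial, so the two realizations contribute $+c(u,v)^2$ and $-c(u,v)^2$ and cancel exactly (the factor $1-1=0$), leaving only genuine OCRSFs in the sum. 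You should state this cancellation explicitly; and if $G$ has parallel edges, the off-diagonal entries must first be expanded as sums over edges, after which a $2$-cycle using two \emph{distinct} parallel edges is a legitimate OCRSF component with generally nontrivial monodromy and is correctly accounted for. With that addition your argument is complete and agrees with the cited source.
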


An OCRSF $\gamma^\vee$ on $G^\vee$ is \textit{dual} to an OCRSF $\gamma$ on $G$ if no edge of $\gamma^\vee$ crosses an edge of $\gamma$. It is easy to see that $\gamma^\vee$ has the same number of cycles as $\gamma$ and each cycle has homology class $\pm [\eta]$, where $\eta$ is any cycle in $\gamma$. An OCRSF $\gamma$ has $2^k$ duals where $k$ is the number of cycles in $\gamma$, one for each choice of orientation of the dual cycles.\\
Given a pair $(\gamma,\gamma^\vee)$ of dual OCRSFs, define its weight to be $wt(\gamma,\gamma^\vee):=wt(\gamma)$. To each pair we associate a homology class,
$$[(\gamma,\gamma')]:=\frac{1}{2}\sum_{ \text{Cycles }\eta \text{ in }\gamma \cup \gamma^\vee}[\eta] \in H_1(\T,\Z).$$

\subsection{Newton polygon of the resistor network}
The \textit{Newton polygon} of the resistor network is 
$$
N=\text{Conv }\{[(\gamma,\gamma^\vee)] \in H_1(\T,\Z): (\gamma,\gamma^\vee) \text{ is a pair of OCRSFs}\}.
$$
$(\gamma,\gamma') \mapsto [(\gamma,\gamma')]$ associates to each pair of dual OCRSFs an integer lattice point in the Newton polygon. $N$ is always centrally symmetric and therefore we can center it at the origin. \\

Since $\mathcal L_G^{\text{flat}} \cong H^1(\T,\C^*)$, we have the natural pairing between homology and cohomology,
\begin{align*}
(\cdot,\cdot):H_1(\T,\Z) \times \mathcal L_G^{\text{flat}}&\ra \C^*
\end{align*}
We can rephrase Theorem \ref{pfnlap} as
$$
\text{det }\Delta=\sum_{(\gamma,\gamma^\vee)}wt(\gamma)([(\gamma,\gamma')],i) .
$$
$P(i):=\text{det }\Delta$ is called the \textit{characteristic polynomial}. The Newton polygon of the characteristic polynomial is
$$
\text{Conv}\{h \in H_1(\T,\Z): \text{Coefficient of }(h,i) \text{ is non-zero in }P(i)\},
$$
and it coincides with the Newton polygon of the resistor network.\\
If we fix a basis for $H_1(\T,\Z)$, we get isomorphisms
$H_1(\T,\Z)\cong \Z^2$ and $\mathcal L_G^{\text{flat}} \cong (\C^*)^2$. If $i \mapsto (z,w) \in (\C^*)^2$, $P(i)=P(z,w)$ is a Laurent polynomial in $z,w$.

\subsection{Temperley's bijection on the torus}
Let $G$ be a resistor network and let $\Gamma_G$ be the associated bipartite graph. 
\begin{lemma}[\cite{GK12}]The Newton polygon $N$ of the resistor network $G$ coincides with the Newton polygon of the dimer model on $\Gamma_G$.
\end{lemma}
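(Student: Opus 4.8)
The plan is to exhibit an explicit bijection between the pairs of dual OCRSFs on $G$ and the dimer covers on $\Gamma_G$ that is compatible with the homology-class maps, so that the two Newton polygons, being convex hulls of the same set of homology classes, coincide. This is the torus analogue of the classical Temperley bijection between spanning trees of a planar graph and dimer covers of its Temperleyan superposition graph, and the statement is attributed to $\cite{GK12}$, so I would follow that framework.

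First I would set up the superposition graph $\Gamma_G$ carefully, recalling from the construction in Section 3 that its black vertices are the vertices and faces of $G$, while its white vertices are the edges of $G$. A dimer cover of $\Gamma_G$ must match every white vertex (edge of $G$) to exactly one of its two adjacent black vertices; a white vertex matched to a $G$-vertex-type black vertex records that the edge is ``used'' by the primal structure, while one matched to a face-type black vertex records that the dual edge is used. The key observation is that the constraint that every black vertex is covered exactly once forces the primal-selected edges to form an OCRSF $\gamma$ of $G$ (each connected component has equally many vertices and edges, hence a unique cycle) and simultaneously forces the dual-selected edges to form an OCRSF $\gamma^\vee$ of $G^\vee$ that does not cross $\gamma$, i.e.\ a dual pair $(\gamma,\gamma^\vee)$. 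I would verify this degree-counting equivalence in both directions, using Euler's formula on $\T$ (already invoked in the excerpt to see that $\Gamma_G$ has equally many black and white vertices) to confirm the cycle-rooted-forest structure rather than an honest spanning tree, which is the essential difference from the planar case.

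Next I would check that this bijection intertwines the two homology-class maps. On the dimer side a cover $M$ determines $[M]\in H_1(\T,\Z)$ by comparing $\omega_M$ with a fixed reference cover $M_0$ and projecting to $H_1(\T,\Z)$; on the resistor side the pair $(\gamma,\gamma^\vee)$ is assigned $[(\gamma,\gamma^\vee)]=\tfrac12\sum_{\eta}[\eta]$ over cycles $\eta$ in $\gamma\cup\gamma^\vee$. I would show that, after fixing the reference matching to correspond to a chosen reference OCRSF pair, the difference $\omega_M-\omega_{M_0}$ has the same image in $H_1(\T,\Z)$ as $[(\gamma,\gamma^\vee)]-[(\gamma_0,\gamma_0^\vee)]$; this is a one-chain bookkeeping computation tracing how the matched/unmatched edges of $\Gamma_G$ translate into the primal and dual cycles. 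Since both Newton polygons are the convex hull of the image of this common set of homology classes, equality of the polygons follows immediately (they may differ by the translation coming from the choice of reference, but both constructions already normalize this, and in any case translation does not change the polygon up to the stated equivalence).

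The main obstacle I anticipate is the careful verification that the dimer constraint genuinely produces a \emph{dual pair} of OCRSFs with the orientation data matching up, rather than merely an unoriented forest pair: the orientations of the cycles on the dimer side are encoded implicitly, and reconciling them with the explicit orientation choices in the definition of $(\gamma,\gamma^\vee)$ requires attention on the torus, where nontrivial homology cycles appear and where a single OCRSF has $2^k$ duals. I would handle this by tracking the bijection locally at each black vertex, showing that the ``turning'' data of the superposition determines consistent cycle orientations, and confirming that the $2^k$ ambiguity on the resistor side is exactly absorbed by the freedom in the dimer cover along the cycles. Once this local-to-global compatibility is pinned down, the homology computation and the equality of Newton polygons are formal.
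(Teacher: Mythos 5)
Your proposal is correct and takes essentially the same route the paper relies on: the paper cites this lemma from \cite{GK12} and it follows immediately from Temperley's bijection on the torus (Theorem \ref{temperley}, stated right after), since the weight- and homology-class-preserving bijection $F \mapsto M_F$ between pairs of dual OCRSFs and dimer covers identifies the two sets of lattice points, hence the two Newton polygons. One small correction to your setup: each white vertex of $\Gamma_G$ has \emph{four} black neighbors (the two endpoints and the two adjacent faces of the corresponding edge of $G$), not two, and it is exactly the choice among these four matchings that encodes both whether the primal or the dual edge is used and its orientation --- which your later discussion of the orientation bookkeeping does treat correctly.
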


Given a pair of dual OCRSFs $F=(\gamma,\gamma^\vee)$ on $G$, we can construct a dimer cover $M_F$ on $\Gamma_G$ using the rule: The oriented edge $e=\langle u,v \rangle$ is in $F$ if and only if the edge $\langle u,e\rangle$ is in $M_F$.

\begin{theorem}[Temperley's bijection on torus \cite{KPW00}]\label{temperley}
Let $(G,c)$ be a resistor network on a torus. $F \mapsto M_F$ is a weight preserving bijection from pairs of dual OCRSFs on $G$ to dimer covers on $\Gamma_G$ such that $[F]=[M_F]$ in $N$.
\end{theorem}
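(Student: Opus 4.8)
The plan is to establish Temperley's bijection by carefully unpacking the construction rule $F \mapsto M_F$ and verifying three separate claims: that $M_F$ is genuinely a dimer cover of $\Gamma_G$, that the map is a bijection, and that it is both weight- and homology-preserving. Recall that the vertices of $\Gamma_G$ are of three types: black vertices coming from vertices of $G$, black vertices coming from faces of $G$ (equivalently vertices of $G^\vee$), and white vertices coming from edges of $G$ (which are also in bijection with edges of $G^\vee$). The rule says an oriented edge $e = \langle u, v\rangle \in F$ produces the matching edge $\langle u, e\rangle$ in $M_F$, pairing the tail vertex $u$ of $G$ with the white vertex for $e$.

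**First I would verify the matching property.** Every vertex of $G$ must be covered exactly once: since $F = (\gamma, \gamma^\vee)$ restricted to $G$ is an OCRSF, each connected component has exactly one oriented cycle and the out-degree of every vertex is exactly one (each vertex has a unique outgoing edge in the forest-toward-cycle orientation). This gives a perfect matching between $G$-vertices and the white vertices whose corresponding edges are \emph{used and oriented out of that vertex}. The remaining white vertices — those corresponding to edges \emph{not} oriented as in $\gamma$ — must be matched to face-vertices of $G$, and here the dual OCRSF $\gamma^\vee$ provides precisely the needed pairing via the non-crossing condition. The crucial combinatorial input is that the edges of $G$ split into those in $\gamma$ and those crossed by $\gamma^\vee$, using the fact (stated in the excerpt) that $\gamma^\vee$ crosses no edge of $\gamma$, and a counting argument via Euler's formula showing $\Gamma_G$ has equal numbers of black and white vertices. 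I would argue that each white vertex is matched exactly once: toward its tail in $G$ if the edge lies in $\gamma$, and toward the appropriate face-vertex otherwise.

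**Next I would establish bijectivity** by constructing the inverse. Given a dimer cover $M$ of $\Gamma_G$, the edges matching $G$-vertices to white vertices recover an oriented subgraph of $G$ (orient each used edge away from its matched $G$-vertex), and one checks this recovers an out-degree-one orientation, hence an OCRSF $\gamma$; symmetrically the edges at face-vertices recover $\gamma^\vee$. The non-crossing property is automatic because a white vertex is matched to exactly one of its two adjacent black vertices, so the primal and dual selections never conflict on the same edge. The weight-preserving claim is then immediate from the definition of the edge weights on $\Gamma_G$: edges incident to $G$-vertices carry the conductance $c(e)$ while edges incident to face-vertices carry weight $1$, so $\mathrm{wt}(M_F) = \prod_{e \in \gamma} c(e) = \mathrm{wt}(\gamma) = \mathrm{wt}(F)$, matching exactly the definition $\mathrm{wt}(\gamma,\gamma^\vee) := \mathrm{wt}(\gamma)$.

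**The main obstacle will be the homology statement** $[F] = [M_F]$, which requires relating the two different homological bookkeeping conventions. On the resistor side, $[F] = \tfrac{1}{2}\sum_{\eta}[\eta]$ summed over cycles in $\gamma \cup \gamma^\vee$; on the dimer side, $[M_F]$ is computed relative to a reference matching as the projection to $H_1(\T,\Z)$ of the cycle $\omega_{M_F} - \omega_{M_0}$. I would fix compatible reference data — choosing $M_0 = M_{F_0}$ for a convenient reference pair $F_0$ — so that the difference $\omega_{M_F} - \omega_{M_{F_0}}$ decomposes into contributions along the cycles of $\gamma$ (contributing $[\eta]$) and the cycles of $\gamma^\vee$ (contributing via the dual cycles, which are homologous to $\pm[\eta]$). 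The factor of $\tfrac12$ reconciles because each pair of dual OCRSFs carries cycles in both $\gamma$ and $\gamma^\vee$ that are homologous, so the dimer cycle "sees" the homology class once while the averaged sum over both primal and dual cycles also returns the same class. Making this identification rigorous — tracking exactly how the Temperley edge-correspondence routes the net flux of $M_F - M_0$ through the torus and confirming the two-to-one correspondence of cycles accounts for the normalization — is the delicate step; the rest is bookkeeping and an appeal to the already-established Newton polygon coincidence from the preceding lemma.
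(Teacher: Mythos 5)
You should note at the outset that the paper does not prove this statement at all: it is quoted from \cite{KPW00}, so there is no internal proof to compare against, and your attempt has to be judged on its own merits. On that basis, three of your four claims are handled correctly: the matching property (out-degree one in $\gamma$ and in $\gamma^\vee$, the non-crossing duality condition preventing a white vertex from being claimed by both the primal and dual sides, and the Euler count $E=V+F$ forcing exactly-once coverage), bijectivity (a perfect matching decodes to an out-degree-one oriented subgraph of $G$ and of $G^\vee$, and such a functional graph is precisely an OCRSF with trees oriented toward their cycles), and weight preservation (immediate from the weights $c(e)$ and $1$ on $\Gamma_G$). One local slip: a white vertex of $\Gamma_G$ has \emph{four} black neighbours (the two endpoints of $e$ and the two faces adjacent to $e$), not two; the correct statement is that $M$ matches it to exactly one of the four, which is why the primal and dual selections never conflict.

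The genuine gap is the homology statement, which you yourself flag as ``the delicate step'' and then do not carry out --- and the heuristic you offer for the factor $\tfrac12$ is incorrect as stated. You say the dimer cycle ``sees the homology class once'' while the averaged sum over primal and dual cycles returns the same class. But the paper allows the dual cycles to be oriented independently (an OCRSF has $2^k$ duals), so $\gamma$ may have a cycle of class $\eta$ while $\gamma^\vee$ has its parallel cycle oriented with class $-\eta$, giving $[F]=0$; in that case the dimer difference cycle must be null-homologous, i.e.\ it sees the class zero times, not once. The actual mechanism is geometric: the matched edge $\langle u,e\rangle$ is the half of the segment $e$ running from $u$ to the crossing point with $e^\vee$, so $\omega_{M_F}$ consists of the ``first halves'' of the edges of $\gamma$ and of $\gamma^\vee$; integrality of the total class is restored precisely because primal and dual cycles come in parallel pairs, with cancellation when their orientations oppose. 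Moreover, the contribution of a single cycle of $\gamma$ to $\omega_{M_F}$ is not a closed $1$-chain (its boundary is supported on vertices and edge-midpoints), and the tree edges of $\gamma$ and $\gamma^\vee$ also contribute half-edges, so the decomposition of $\omega_{M_F}-\omega_{M_{F_0}}$ ``into contributions along the cycles'' that you propose does not exist at the chain level. One needs an actual homotopy/cancellation argument showing
\begin{equation*}
\omega_{M_F}-\omega_{M_{F_0}} \sim \tfrac{1}{2}\Bigl(\textstyle\sum_{\eta \subset \gamma\cup\gamma^\vee}\eta-\sum_{\eta_0 \subset \gamma_0\cup\gamma_0^\vee}\eta_0\Bigr)
\end{equation*}
in $H_1(\T,\Z)$, together with a consistent choice of reference matching and centering identifying the dimer and resistor Newton polygons; this is exactly the content of the torus case of \cite{KPW00}. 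Without it, $[F]=[M_F]$ is asserted rather than proved.
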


\subsection{Zig-zag paths and minimality for resistor networks}
An \textit{oriented zig-zag path} on a resistor network $G$ is a path that alternately turns maximally right or left at each vertex. Zig-zag paths on $G$ come in pairs with opposite orientations. We denote the set of zig-zag paths on $G$ by $\mathcal Z_G$. We say that $G$ is \textit{minimal} if the lift of any zig-zag path to the universal cover does not intersect itself and if the lifts of two different zig-zag paths intersect at most once.  \\
If $G$ is a minimal resistor network, associated to each $\alpha \in \mathcal Z_G$ is its homology class $[\alpha] \in H_1(\T,\Z)$. They correspond to integral primitive vectors on the boundary of the Newton polygon $N$ in cyclic order. \\
There is a natural bijection between $\mathcal Z_G$ and $Z_{\Gamma_G}$ that preserves the homology class.

\section{External OCRSFs}
We say that a pair of dual OCRSFs $F$ is \textit{external} if $[F]$ is a boundary lattice point of $N$. It is \textit{extremal} if $[F]$ is a vertex of $N$. We note that if $F=[(\gamma,\gamma^\vee)]$ is external, then the orientations of $\gamma$  and $\gamma^\vee$ are uniquely determined by $[F]$ and $[\gamma]=[\gamma^\vee]=[F]$. Therefore we can equivalently define external and extremal $OCRSFs$ on $G$ instead of pairs of dual $OCRSFs$. \\

For a vertex $v \in G$, we define the \textit{local zig-zag fan} $\Sigma_v$ at $v$ to be the complete fan of strongly convex rational polyhedral cones in $H_1(\T,\R)$ whose rays are generated by the homology classes of zig-zag paths through $v$ that turn maximally right at $v$.\\
The fan $\Sigma$ whose rays are generated by the homology classes of all zig-zag paths on $G$ is called the \textit{global zig-zag fan} of $G$. We have the natural map of fans $i_v:\Sigma \ra \Sigma_v$ for each $v \in G$. If $\sigma$ is a 2-dimensional cone in $\Sigma$, $i_v(\sigma)$ is contained in a unique two dimensional cone in $\Sigma_v$, which we shall denote by $\sigma_v$. $\sigma_v$ determines a unique edge $e$ adjacent to $v$ that is oriented away from $v$. Let $\gamma_{{\sigma}_v}$ be the 1-chain that is $1$ on $e$, $-1$ on $-e$ and $0$ on all other edges. We define
$$
\gamma_\sigma:=\sum_{v\in V(G)}\gamma_{\sigma_v}.
$$
From Temperley's bijection (Theorem \ref{temperley}) applied to Theorem \ref{extremaldimer}, we obtain:
\begin{lemma}\label{crsfextremal}
$\gamma_V:=\gamma_\sigma$ is the unique extremal OCRSF on $G$ such that $[\gamma_V]$ is the vertex $V$ of $N$ that corresponds to $\sigma$.
\end{lemma}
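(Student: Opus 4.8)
The plan is to transport the statement through Temperley's bijection (Theorem \ref{temperley}) and reduce it to the dimer statement already available, Theorem \ref{extremaldimer}. Precisely, I would show that the explicitly constructed chain $\gamma_\sigma$ on $G$, together with its mirror construction $\gamma_\sigma^\vee$ on $G^\vee$, is carried by Temperley's bijection to the explicit extremal dimer cover $\omega(\sigma)$ on $\Gamma_G$, i.e. $M_{(\gamma_\sigma,\gamma_\sigma^\vee)}=\omega(\sigma)$. Granting this, the lemma follows at once: Theorem \ref{extremaldimer} identifies $\omega(\sigma)$ as the \emph{unique} extremal dimer cover with class $V$, and since $F \mapsto M_F$ is a bijection with $[F]=[M_F]$, its preimage is the unique extremal pair of dual OCRSFs with class $V$; as the orientations and the dual of an extremal $F$ are forced, this pair is recorded by the single primal forest $\gamma_\sigma=\gamma_V$, and $[\gamma_V]=V$.

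The identity $M_{(\gamma_\sigma,\gamma_\sigma^\vee)}=\omega(\sigma)$ I would prove locally, since all the objects involved are assembled from per-vertex contributions and Temperley's rule is itself edge-by-edge. Recall that the black vertices of $\Gamma_G$ are of two kinds, the vertices of $G$ (primal) and the faces of $G$, i.e. the vertices of $G^\vee$ (dual), while the white vertices are the edges of $G$. Thus $\omega(\sigma)=\sum_{b \text{ black}}\omega(\sigma_b)$ splits into a primal part and a dual part, and I would match the primal part with $\gamma_\sigma$ and the dual part with $\gamma_\sigma^\vee$, each contribution $\omega(\sigma_b)$ against the corresponding $\gamma_{\sigma_b}$.

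The heart of the argument, and the step I expect to be the main obstacle, is this local matching at a primal black vertex $v$. Using the natural homology-preserving bijection $\mathcal Z_G \cong \mathcal Z_{\Gamma_G}$, I must identify the resistor-network local fan $\Sigma_v$, whose rays come from the zig-zags through $v$ turning maximally right at $v$, with the dimer local fan $\Sigma_v$ at the black vertex $v$, whose rays come from all zig-zags of $\Gamma_G$ through $v$; the delicate point is that the maximally-right condition in $G$ keeps only one turn at $v$, so I must verify that these two collections of rays genuinely coincide and hence that $i_v(\sigma)$ selects the same two-dimensional cone $\sigma_v$ in both pictures. Once the cones agree, $\sigma_v$ selects in the resistor picture a unique edge $e=\langle v,v'\rangle$ oriented away from $v$, and in the dimer picture (via Lemma \ref{span}) the unique white neighbour $w=e$ of $v$; Temperley's rule, that $\langle v,v'\rangle$ lies in the forest iff $\langle v,e\rangle$ lies in the matching, then matches $\gamma_{\sigma_v}$ with $\omega(\sigma_v)$, the tail-at-$v$ convention corresponding to the black-vertex-$v$ convention. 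Running the identical argument on $G^\vee$ at each dual black vertex produces $\gamma_\sigma^\vee$ and accounts for the remaining dimers of $\omega(\sigma)$.

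Finally, I would note that no separate verification is needed that $\gamma_\sigma$ (with $\gamma_\sigma^\vee$) is a bona fide OCRSF pair, e.g. that each edge of $G$ is selected exactly once: this is inherited from Theorem \ref{extremaldimer}, because the local correspondence identifies $(\gamma_\sigma,\gamma_\sigma^\vee)$ with the Temperley preimage of the genuine dimer cover $\omega(\sigma)$, and every Temperley preimage of a dimer cover is an OCRSF pair. With $M_{(\gamma_\sigma,\gamma_\sigma^\vee)}=\omega(\sigma)$ in hand, Theorems \ref{temperley} and \ref{extremaldimer} deliver both the existence and the uniqueness of the extremal OCRSF $\gamma_V=\gamma_\sigma$ together with $[\gamma_V]=V$, completing the proof.
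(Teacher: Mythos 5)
Your proposal is correct and takes essentially the same route as the paper: the paper obtains Lemma \ref{crsfextremal} precisely by applying Temperley's bijection (Theorem \ref{temperley}) to the dimer result (Theorem \ref{extremaldimer}), exactly as you do. The only difference is that the paper leaves the identification $M_{(\gamma_\sigma,\gamma_\sigma^\vee)}=\omega(\sigma)$ implicit (it states the lemma as an immediate consequence), whereas you spell out the local matching of the resistor and dimer zig-zag fans at each black vertex; this is a correct and welcome elaboration, not a different argument.
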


To a zig-zag path $\alpha \in Z_G$ we associate a 1-chain $\omega_\alpha$ that is $1$ on edges $e$ in $\alpha$ that are oriented in the same direction as $\alpha$ and $0$ on edges not in $\alpha$. If $\gamma$ is external, $[\gamma]$ lies on an edge $E$ of $N$, which corresponds to a family of zig-zag paths $\{\alpha_k\}$. Let $E=\langle V_1,V_2\rangle$, where $V_1,V_2$ are vertices of $N$ such that $V_2$ is the vertex after $V_1$ when the boundary of $N$ is traversed counterclockwise.\\

Using Temperley's bijection (Theorem \ref{temperley}), Theorem \ref{externaldimer} and the bijection between zig-zag paths on $G$ and $\Gamma_G$, we obtain:
\begin{lemma}\label{crsfexternal}
Let $A$ be a subset of the family of zig-zag paths $\{\alpha_k\}$ corresponding to $E$. The external OCRSFs on $E$ are of the form 
$$
\gamma_A:=\gamma_{V_1}+\sum_{\alpha_k \in A}\omega_{\alpha_k}.
$$
In particular, $\gamma_{V_2}=\gamma_{V_1}+\sum_{k}\omega_{\alpha_k}$, and the number of OCRSFs corresponding to a boundary lattice point of $N$ is a binomial coefficient.
\end{lemma}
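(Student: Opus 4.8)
The plan is to transport the dimer classification of Theorem~\ref{externaldimer} across Temperley's bijection, exactly as Lemma~\ref{crsfextremal} transports Theorem~\ref{extremaldimer}. First I would restrict Temperley's bijection (Theorem~\ref{temperley}) to the external locus. Since $F \mapsto M_F$ is weight preserving and satisfies $[F]=[M_F]$, it carries pairs of dual OCRSFs $F$ with $[F]$ on the edge $E$ bijectively to dimer covers $M$ on $\Gamma_G$ with $[M]$ on $E$; by the remark preceding this section, for external $F$ the orientations of $\gamma$ and $\gamma^\vee$ are forced, so I may identify external pairs with external OCRSFs and obtain a bijection between external OCRSFs on $E$ and external dimer covers on $E$.

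Next I would invoke Theorem~\ref{externaldimer}, which says that the external dimer covers on $E$ are exactly the $\omega_A=\omega_{V_1}+\sum_{\alpha_k\in A}\omega_{\alpha_k}$ as $A$ ranges over subsets of the zig-zag family $\{\alpha_k\}$ of $E$. It then remains to identify, for each $A$, the unique external OCRSF $\gamma$ with Temperley image $M_\gamma=\omega_A$, and to check that it equals the claimed $\gamma_A=\gamma_{V_1}+\sum_{\alpha_k\in A}\omega_{\alpha_k}$. This splits into two pieces: the base point, namely that the Temperley preimage of $\omega_{V_1}$ is $\gamma_{V_1}$, which is precisely Lemma~\ref{crsfextremal}; and the increment, namely that toggling the dimer cover by a single zig-zag $1$-form $\omega_{\alpha_k}$ on $\Gamma_G$ corresponds under $F\mapsto M_F$ to adding the zig-zag $1$-chain $\omega_{\alpha_k}$ on $G$. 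For the increment I would use the Temperley edge rule $\langle u,v\rangle\in F \iff \langle u,e\rangle\in M_F$ together with the homology preserving bijection $\mathcal Z_{\Gamma_G}\cong\mathcal Z_G$: the edges toggled by $\omega_{\alpha_k}$ are those incident to the white vertices (edges of $G$) lying on the matching zig-zag path of $G$, and toggling the matching there is exactly the change of tail vertex that advances the OCRSF edge along $\alpha_k$.

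Granting this dictionary, $A\mapsto\gamma_A$ enumerates the external OCRSFs on $E$, and the remaining assertions are formal. Taking $A$ to be the full family gives $M_\gamma=\omega_{V_2}$, whose preimage is $\gamma_{V_2}$ by Lemma~\ref{crsfextremal}, yielding $\gamma_{V_2}=\gamma_{V_1}+\sum_k\omega_{\alpha_k}$. Writing $p$ for the primitive edge vector of $E$ and $m=\#\{\alpha_k\}$, each $\omega_{\alpha_k}$ contributes $p$ to the homology class, so $[\gamma_A]=V_1+|A|\,p$; the external OCRSFs over the lattice point $V_1+jp$ are those with $|A|=j$, and there are $\binom{m}{j}$ of them, a binomial coefficient.

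The main obstacle is the increment step, i.e.\ checking that Temperley's (non-linear) bijection is compatible with the additive zig-zag structure on the two graphs. Everything else is a formal restriction and transport of Theorem~\ref{externaldimer}, but this compatibility requires the local, edge-by-edge analysis of the Temperley rule along a zig-zag path and its image under $\mathcal Z_{\Gamma_G}\cong\mathcal Z_G$ sketched above.
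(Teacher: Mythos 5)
Your proposal matches the paper's proof, which is exactly the one-line transport you describe: the paper derives Lemma~\ref{crsfexternal} by combining Temperley's bijection (Theorem~\ref{temperley}), Theorem~\ref{externaldimer}, and the homology-preserving bijection between zig-zag paths on $G$ and $\Gamma_G$. Your write-up simply makes explicit the base-point and increment checks that the paper leaves implicit, so it is the same argument, carried out in more detail.
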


\begin{corollary}\label{cyc}
Every external OCRSF is a union of cycles.
\end{corollary}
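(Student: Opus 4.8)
The plan is to translate ``union of cycles'' into a degree condition and then reduce, via Lemma~\ref{crsfexternal}, to the extremal case. First I would record the structural fact, intrinsic to any OCRSF, that orienting each non-cycle edge toward the unique cycle of its connected component (and each cycle edge along its chosen orientation) makes every vertex have out-degree exactly one; thus an OCRSF is a functional graph with exactly $|V(G)|$ edges. Consequently it is a disjoint union of cycles if and only if every vertex also has in-degree one, and since the in-degrees sum to $|V(G)|$ this is equivalent to the absence of a \emph{source}, i.e.\ a vertex of in-degree zero. A source is precisely a leaf of a tree hanging off a cycle, so the corollary is equivalent to the assertion that no external OCRSF has a source. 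By Lemma~\ref{crsfexternal} every external OCRSF has the form $\gamma_A=\gamma_{V_1}+\sum_{\alpha_k\in A}\omega_{\alpha_k}$, and each such $\gamma_A$ is again an external OCRSF, so I would argue by induction on $|A|$, adding one zig-zag path at a time.

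For the base case $A=\varnothing$ the configuration is the extremal OCRSF $\gamma_{V_1}$ of Lemma~\ref{crsfextremal}, and here I would exploit the \emph{uniqueness} asserted there. Suppose $\gamma_{V_1}$ had a source $u$; then $u$ is a tree leaf incident to a single edge $e$ of $\gamma_{V_1}$. Deleting $e$ isolates $u$ and leaves a cycle-rooted forest on the remaining vertices, and re-attaching $u$ through any other edge at $u$ (such an edge exists because a minimal network has no vertex of degree one) produces a cycle-rooted spanning forest $\gamma'$ with the same cycles, hence with $[\gamma']=V_1$. Thus $\gamma'$ is a second extremal OCRSF with homology class $V_1$, contradicting the uniqueness in Lemma~\ref{crsfextremal}. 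Therefore $\gamma_{V_1}$ has no source and is a union of cycles. I note that this argument is genuinely special to the extremal case, since at an interior boundary point there are $\binom{m}{j}$ distinct external OCRSFs and no uniqueness to exploit.

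For the inductive step I would analyze the passage $\gamma_A\mapsto\gamma_A+\omega_{\alpha^\ast}$, where $\alpha^\ast$ is one further zig-zag path in the family of the edge $E$. By minimality $\alpha^\ast$ lifts to a non-self-intersecting curve and meets every other zig-zag path, hence every cycle of $\gamma_A$, at most once (Lemma~\ref{span} identifies the local crossings with edges), so the sum of $1$-chains should be read as resolving these finitely many transverse crossings. The content of the step is to check that this resolution keeps all coefficients in $\{0,1\}$ and reconnects the cycles of $\gamma_A$ together with $\alpha^\ast$ into a new disjoint union of parallel cycles, creating no vertex of in-degree zero. I expect this chain-addition bookkeeping to be the main obstacle: one must show that minimality forces the only interactions between $\alpha^\ast$ and $\gamma_A$ to be clean rerouting at single crossings, so that no edge is traversed twice and no pendant edge is ever produced, which is exactly what prevents a source from appearing. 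Granting this, the induction completes the proof that every external OCRSF is a union of cycles.
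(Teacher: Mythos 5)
Your base case is correct, and it is a genuinely different argument from the paper's. The paper proves the extremal case constructively: it introduces the ``reversed'' local fans $-\Sigma_v$ generated by the zig-zag paths turning maximally \emph{left} at $v$, builds from them a second $1$-chain $\gamma'_\sigma$ in which every vertex has a unique \emph{incoming} edge, and then uses Lemma \ref{span} to show $\gamma'_{\sigma_v}=\gamma_{\sigma_u}$ across each edge $\langle u,v\rangle$, whence $\gamma'_\sigma=\gamma_\sigma$ and every vertex has in-degree one. Your argument replaces all of this by a contradiction that leans on the \emph{uniqueness} clause of Lemma \ref{crsfextremal}: a source can be detached and re-attached along a different edge (which exists, since minimality excludes degree-one vertices), and since the homology class of an OCRSF depends only on its cycles, the rewired forest is a second, distinct extremal OCRSF of class $V_1$. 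That is a valid and arguably more elementary route; its cost is that it consumes the uniqueness statement of Lemma \ref{crsfextremal} as a black box, whereas the paper's fan computation is what underlies that uniqueness in the first place.

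The inductive step, however, is a genuine gap: you explicitly ``grant'' the key claim rather than prove it, and moreover the difficulty you anticipate is a misdiagnosis of what remains to be shown. Verifying that $\gamma_{V_1}+\sum_{\alpha_k\in A}\omega_{\alpha_k}$ has coefficients in $\{0,1\}$, that no edge is traversed twice, and that the crossings resolve cleanly is precisely the content of Lemma \ref{crsfexternal}, which you are already invoking; carrying out your proposed crossing analysis would amount to re-proving that lemma. What actually remains after the base case is a one-line observation, and it needs no induction on $|A|$: each $\omega_{\alpha_k}$ is a closed loop, and $\gamma_{V_1}$ is a union of cycles by the base case, so the $1$-chain $\gamma_A=\gamma_{V_1}+\sum_{\alpha_k\in A}\omega_{\alpha_k}$ has zero boundary. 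But Lemma \ref{crsfexternal} says this chain \emph{is} (the $1$-chain of) the external OCRSF, and an OCRSF with a nonempty tree part has a leaf, which contributes $\pm 1$ to the boundary of its chain. Hence $\gamma_A$ has no tree part and is a union of cycles. This is exactly how the paper concludes (``external $=$ extremal $+$ cycles''), so your proof is salvageable, but as written its second half is missing, and the route you sketch for filling it is the wrong one.
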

\begin{proof}
Suppose $\gamma_\sigma$ is an external OCRSF and let $v$ be a vertex of $G$. By construction, there is a single outgoing edge from $v$. We show that there is also a single incoming edge. Consider the fan $-\Sigma_v$ whose rays are generated by homology classes of zig-zag paths that turn maximally left at $v$ and let $i_v':\Sigma \ra -\Sigma_v$ be the natural map. $i_v'(\sigma)$ is contained in a unique two dimensional cone $\sigma_v'$ which corresponds to a unique edge $e$ oriented towards $v$. Define the 1-chain $\gamma_{\sigma_v}'$ to be $1$ on $e$ and $0$ on all other edges and define the 1-chain 
$$
\gamma_\sigma':=\sum_{v\in V(G)}\gamma_{\sigma_v}'.
$$
Let $e=\langle u,v \rangle$ be an edge in $G$ and let $\alpha_1$ and $\alpha_2$ be the two zig-zag paths through $e$ that turn maximally left at $v$. Then $\alpha_1$ and $\alpha_2$ turn maximally right at $u$ and therefore using Theorem \ref{span}, we have $\sigma'_v=\sigma_u$ which implies $\gamma_{\sigma_v}'=\gamma_{\sigma_u}$. Summing over all vertices, we get $\gamma_{\sigma}'=\gamma_{\sigma}$. It is clear from the definition of $\gamma_{\sigma}'$ that every vertex has a unique incoming edge. It follows that $\gamma_\sigma$ is a union of cycles.\\
By Lemma \ref{crsfexternal}, every external OCRSF is obtained from an extremal OCRSF $\gamma_V$ by adding cycles corresponding to some zig-zag paths and therefore is also a union of cycles. 

\end{proof}

\section{Spectral data}
A convex integral polygon $N$ determines a toric surface $\mathcal N$ along with an ample line bundle $\mathcal L$ on it. The global sections of $\mathcal L$ can be canonically identified with Laurent polynomials with Newton polygon $N$. Let $|\mathcal{L}|$ be the linear system of curves on $\mathcal N$ given by the vanishing loci of global sections of $\mathcal L$. Let $g=\text{number of interior lattice points in }N-1$. \\
Let $\mathcal{S}_N$ be the moduli space of triples $(C,S,\nu)$ such that $C$ is a curve in $|\mathcal{L}|$, $S$ is a degree $g$ effective divisor on $C$ and $\nu$ is a parameterization of the points at infinity of $C$. Let $G$ be a minimal resistor network associated to $N$ and $v$ a vertex of $G$. We have a natural rational map 
$$\rho_{G,v}: \mathcal R_N^0 \ra \mathcal S_N,$$
described on the affine chart $\mathcal R_G$ as follows:\\
$C_0$ is the spectral curve $\{(z,w) \in (\mathbb{C}^*)^2:\text{det }\Delta(z,w)=0\}$. Let $i:C_0 \hookrightarrow (\mathbb{C}^*)^2$ denote the inclusion. The Laplacian sits in the following exact sequence on $(\mathbb{C}^*)^2$:
\begin{equation}\label{es1}
\bigoplus_{v \in V} \mathcal{O}_{(\C^*)^2} \xrightarrow[]{\Delta} \bigoplus_{v \in V}\mathcal{O}_{(\C^*)^2} \ra \text{Coker }\Delta \ra 0.
\end{equation}
\begin{lemma}\label{linebundle}
$i^*\text{Coker }\Delta$ is a line bundle on $C_0$. 
\end{lemma}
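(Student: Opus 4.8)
The plan is to translate the statement into a pointwise rank condition and then prove that condition. Set $n=|V(G)|$, so $\Delta$ is an $n\times n$ matrix over $\C[z^{\pm1},w^{\pm1}]$. Since $\det\Delta$ is invertible away from $C_0$, the sheaf $\mathrm{Coker}\,\Delta$ is supported on $C_0$ and is annihilated by $\det\Delta$ (because $\mathrm{adj}(\Delta)\,\Delta=\det\Delta\cdot I$), so it is an $\mathcal O_{C_0}$-module. Working in the local ring $\mathcal O_{(\C^*)^2,p}$ at a point $p\in C_0$, elementary row and column operations over $\mathcal O_{(\C^*)^2,p}$ (which do not change the cokernel) reduce $\Delta$ to $\mathrm{diag}(1,\dots,1,\delta)$ with $\delta=(\mathrm{unit})\cdot\det\Delta$ exactly when $\Delta(p)$ has corank $1$; in that case $(i^*\mathrm{Coker}\,\Delta)_p\cong\mathcal O_{(\C^*)^2,p}/(\det\Delta)=\mathcal O_{C_0,p}$ is free of rank $1$. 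Conversely, if the corank is $\ge 2$ the fiber is at least two-dimensional and the sheaf is not locally free of rank $1$ there. Thus the lemma is equivalent to
\[
\dim_{\C}\ker\Delta(z,w)=1\qquad\text{for every }(z,w)\in C_0 .
\]
This must hold even at the singular points of $C_0$ (e.g.\ the node at $(1,1)$), where corank $2$ would produce the push-forward of $\mathcal O$ from the normalization rather than a line bundle.

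First I would dispatch the smooth locus for free. By Jacobi's formula $d(\det\Delta)=\mathrm{tr}\big(\mathrm{adj}(\Delta)\,d\Delta\big)$, and if $\Delta(p)$ had corank $\ge 2$ then every $(n-1)\times(n-1)$ minor vanishes, so $\mathrm{adj}(\Delta)(p)=0$, forcing $d(\det\Delta)(p)=0$; hence $p$ would be a singular point of $C_0$. Therefore the corank is automatically $1$ at every smooth point, $i^*\mathrm{Coker}\,\Delta$ is already a line bundle on the smooth locus, and the entire problem reduces to the finitely many singular points of $C_0$.

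The remaining, and main, step is to rule out corank $\ge 2$ at the singular points, equivalently to show that the $(n-1)\times(n-1)$ minors of $\Delta$ (the entries of $\mathrm{adj}(\Delta)$) have no common zero on $C_0$. The point $(1,1)$ is immediate: there the connection is trivial and $\Delta$ is the ordinary weighted Laplacian of the connected graph $G$, whose kernel is spanned by the constant function, so the corank equals $1$. For a general point I would use the combinatorial description of the cofactors: a principal cofactor of $\Delta$ (deleting the row and column of a vertex $v_0$) is the determinant of the Laplacian grounded at $v_0$, which by the matrix-tree expansion underlying Theorem \ref{pfnlap} is a weighted generating function of cycle-rooted spanning forests. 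The external and extremal OCRSF analysis of Section 4 (Lemmas \ref{crsfextremal}, \ref{crsfexternal} and Corollary \ref{cyc}) identifies the boundary (leading) coefficients of these cofactors relative to $N$ and shows they are nonzero; this controls the cofactors near the toric boundary and lets one argue that they cannot all vanish simultaneously at a point of $C_0$.

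The hard part is precisely this last non-vanishing at the singular points: a priori the cofactors could share a zero on $C_0$, and that is exactly where the corank would jump. An alternative route that sidesteps the direct combinatorics is to transfer the statement through Temperley's bijection (Theorem \ref{temperley}): $\Delta$ is a Schur complement of the dimer Kasteleyn operator $K$ on $\Gamma_G$, the two share the same characteristic polynomial and hence the same spectral curve $C_0$, and $\mathrm{Coker}\,K$ is a line bundle by the dimer spectral theory of \cite{GK12} and \cite{Fock15}; one then checks that passing to the Schur complement alters the cokernel only by a contribution supported away from $C_0$, so the line-bundle property descends to $i^*\mathrm{Coker}\,\Delta$.
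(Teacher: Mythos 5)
The parts of your argument that are complete coincide with the paper's proof, just phrased in different language: your local Smith-normal-form reduction is the paper's ``a coherent sheaf of constant fiber dimension on a reduced curve is locally free''; your Jacobi-formula argument at smooth points is a self-contained replacement for the paper's citation of \cite{CT79}, Theorem 2.2; and your treatment of $(1,1)$ (trivial connection, kernel of the ordinary Laplacian spanned by constants) is exactly the paper's second sentence.

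The genuine gap is the step you yourself flag as ``the hard part'': ruling out corank $\ge 2$ at singular points of $C_0$ other than $(1,1)$. Neither of your escape routes is a proof. The combinatorial route ends at ``lets one argue,'' and the Schur-complement route rests on the claim that passing from the Kasteleyn operator to $\Delta$ changes the cokernel only by a contribution supported away from $C_0$; that claim is doubtful, since the paper's concluding conjecture (dimer divisor $=S+(1,1)$) indicates the two cokernels differ precisely at the node $(1,1)$, which lies \emph{on} $C_0$, so the transfer would break down exactly where you need it. What you are missing is that no such argument is required: Lemma \ref{linebundle} is only used to define the \emph{rational} map $\rho_{G,v}$, so it suffices to prove it on a dense open subset of $\mathcal{R}_G$, and generically the node at $(1,1)$ is the \emph{only} singular point of $C_0$ --- by Theorem \ref{harnackthm}, for positive real conductances $C_0$ is a simple Harnack curve whose sole degenerate oval is the node at $(1,1)$, and this persists on a dense subset of complex conductances. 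With that observation, your two completed cases (smooth points and $(1,1)$) exhaust $C_0$ and the proof closes; this genericity is also what the paper uses, silently, when it asserts constant fiber dimension after checking only those two cases. Note that the same genericity is already needed in your $(1,1)$ step: for arbitrary \emph{complex} conductances the kernel of $\Delta(1,1)$ can be larger than the constants (it is one-dimensional exactly when the spanning-tree polynomial, i.e.\ the principal cofactor of $\Delta(1,1)$, is nonzero), so that step, too, is only a generic statement.
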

\begin{proof}
$i^*\text{Coker }\Delta$ has one dimensional fibers over the non-singular points of  $C_0$(see \cite{CT79} Theorem 2.2). The fiber of $i^*\text{Coker }\Delta$ at $(1,1)$ is the space of harmonic functions on $G$ which is one dimensional because the only harmonic functions are constant. Since $C_0$ is reduced and $i^*\text{Coker }\Delta$ is a coherent sheaf of constant fiber dimension, it is locally free.
\end{proof}

The image of the section $\delta_{v}$ gives a section of $i^*\text{Coker }\Delta$. $S$ is the divisor of zeroes of this section.
$\nu$ is the parameterization of the points at infinity by zig-zag paths on $G$ such that the coordinate of the point at infinity associated to a zig-zag path is given by the monodromy around that zig-zag path. \\ 
Let $W \subset |\mathcal L|$ be the linear system of curves defined by sections $P(z,w)$ of $\mathcal L$ satisfying the following:
\begin{itemize}
\item $P(1,1)=0$ and the point $(1,1)$ is a node.
\item $\sigma:(z,w) \mapsto (\frac{1}{z},\frac{1}{w})$ is an involution on $\{P(z,w)=0\}$.
\end{itemize}

Let $\mathcal{S}_N'$ be the moduli space of triples $(C,S,\nu)$ such that $C$ is a curve in $W$, $S$ is a degree $g$ effective divisor on $C \setminus (1,1)$ satisfying 
\begin{equation}\label{divcondition}
S+\sigma(S)-q_1-q_2 \equiv K_{\hat{C}},
\end{equation}
where $\hat{C}$ is the normalization of $C$ and $\nu$ is a parameterization of the points at infinity.

\begin{theorem}
$\rho_{G,v}(\mathcal{R}^0_N) \subseteq \mathcal S_N'$.
\end{theorem}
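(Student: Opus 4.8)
The plan is to verify, for an arbitrary point of $\rho_{G,v}(\mathcal R_N^0)$, the three conditions defining $\mathcal S_N'$: that $C\in W$, that $S$ has degree $g$ and is supported away from $(1,1)$, and that the linear equivalence \eqref{divcondition} holds. For $C\in W$ I would check three things. First, $P(1,1)=0$, because at the trivial connection every monodromy satisfies $m(\eta)=1$, so by Theorem \ref{pfnlap} each summand carries a factor $1-m(\eta)=0$ (every OCRSF contains at least one cycle). Second, $\sigma$ is an involution on $C$ because the resistor Laplacian satisfies $\Delta(c,i)^{T}=\Delta(c,\sigma(i))$: the diagonal of $\Delta$ does not depend on $i$, and the off-diagonal entry $-c(v,v')\,i_{v'v}$ becomes $-c(v,v')\,i_{v'v}^{-1}$ under transposition, which is exactly the entry of $\Delta$ at the inverse connection $\sigma(i)=(1/z,1/w)$; hence $P(z,w)=\det\Delta=\det\Delta^{T}=P(1/z,1/w)$, so $C$ is $\sigma$-invariant.

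Third, and most delicately, $(1,1)$ must be a node. Expanding $P$ about $(1,1)$ via $z=e^{x}$, $w=e^{y}$, the linear term vanishes because orientation-reversal pairs each single-cycle OCRSF $\gamma$ with $\bar\gamma$ of equal weight and opposite homology, so first-order contributions cancel. The quadratic term then has the shape $Q(x,y)=\sum_{d}\kappa_d\,(a_dx+b_dy)^2$, a sum of squares of the linear forms attached to the primitive cycle-homology directions $d=(a_d,b_d)$ (single cycles contribute through $1-e^{a_dx+b_dy}$, and the parallel two-cycle terms contribute the same squares). Since $N$ is two-dimensional these directions span $\R^2$, so $Q$ has rank two and $(1,1)$ is an ordinary double point; checking non-degeneracy of $Q$ in the general complex setting is where care is needed. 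Moreover $Q$ is even, so $\sigma\colon(x,y)\mapsto(-x,-y)$ preserves each of the two branches and therefore fixes the two points $q_1,q_2$ of $\hat C$ lying over the node.

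Next I would analyze the divisor. By Lemma \ref{linebundle}, $\mathcal F=i^{*}\mathrm{Coker}\,\Delta$ is a line bundle and $S=\mathrm{div}(\delta_v)$, where $\delta_v$ is the image of $e_v$. Dualizing, $\mathcal F\cong(\ker\Delta^{T})^{*}$ and $\delta_v$ becomes the $v$-coordinate functional, so $S=\mathrm{div}(\phi_v)$ for $\phi$ a generator of $\ker\Delta^{T}$. The transpose identity above gives $\ker\Delta^{T}|_{p}=\ker\Delta|_{\sigma(p)}$, i.e.\ $\phi=\sigma^{*}\psi$ where $\psi$ generates $\ker\Delta$; hence $\sigma(S)=\mathrm{div}(\psi_v)$ and $S+\sigma(S)=\mathrm{div}(\psi_v\phi_v)$. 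At $(1,1)$ both kernels are the constants, so $\phi_v(1,1)\neq 0$, and the same computation along each branch shows $S$ avoids $(1,1)$ and the points $q_1,q_2$, placing $S$ on $C\setminus(1,1)$ as required.

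The crux is then to identify the line bundle of which $\psi_v\phi_v$ is a section. Writing the adjugate in its rank-one form $\mathrm{adj}\,\Delta=\psi\,\phi^{T}$ and using Jacobi's formula $dP=\phi^{T}(d\Delta)\psi$, I would match $\psi_v\phi_v$ (the $(v,v)$-cofactor restricted to $C$) with the Poincar\'e residue of the toric canonical form $\tfrac{1}{P}\tfrac{dz}{z}\wedge\tfrac{dw}{w}$; by adjunction on the toric surface $\mathcal N$ this residue is a section of $\nu^{*}\omega_C=\omega_{\hat C}(q_1+q_2)$, the extra points arising from the node. This yields the self-duality isomorphism $\mathcal E\otimes\sigma^{*}\mathcal E\cong(\nu^{*}\omega_C)^{-1}$, equivalently that $\psi_v\phi_v$ is a global section of $\nu^{*}\omega_C$, whence
$$S+\sigma(S)=\mathrm{div}(\psi_v\phi_v)\equiv K_{\hat C}+q_1+q_2,$$
which is precisely \eqref{divcondition}; comparing degrees then gives $\deg S=g$ since $\deg\sigma(S)=\deg S$ and $\deg\nu^{*}\omega_C=2g$. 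The main obstacle is exactly this final identification: establishing the isomorphism with $\nu^{*}\omega_C$ with the correct $q_1+q_2$ correction, which demands a careful local study at the node $(1,1)$ to verify that the residue differential acquires simple poles at $q_1,q_2$ while $\psi_v\phi_v$ stays finite and nonzero there.
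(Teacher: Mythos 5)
Your skeleton for the divisor condition is in fact the paper's: the symmetry $\Delta(z,w)=\Delta(1/z,1/w)^{T}$, vanishing of $P$ at $(1,1)$ from harmonicity of constants, the rank-one adjugate factorization giving $S+\sigma(S)=\mathrm{div}_{C_0}Q$ for the $(v_0,v_0)$-cofactor $Q$, and comparison of $Q$ with the residue form $\omega=\frac{Q\,dz}{zw\,\partial P/\partial w}$. But you have misplaced the difficulty, and the step you wave through ``by adjunction on the toric surface'' is exactly where the real content lies. Adjunction/Poincar\'e residue only identifies the divisor of $\omega$ \emph{away from the toric boundary} as $S+\sigma(S)-q_1-q_2$; a priori $Q$ can blow up on the boundary of $\mathcal N$, in which case $\omega$ acquires poles at the points at infinity and $S+\sigma(S)-q_1-q_2$ differs from $K_{\hat C}$ by a nonzero boundary divisor, so \eqref{divcondition} fails. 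The paper rules this out with a combinatorial input your proposal never touches: every external OCRSF is a union of cycles (Corollary \ref{cyc}), hence passes through every vertex including $v_0$; since $Q$ is the CRSF partition function of $G$ with $v_0$ deleted, its Newton polygon is \emph{strictly} contained in $N$, and the lattice-area formula for orders of vanishing then shows $\omega$ has no poles at infinity. The degree count ($\deg(S+\sigma(S)-q_1-q_2)=2g-2=\deg K_{\hat C}$) then forces $\omega$ to have no zeros at infinity either. By contrast, the part you call ``the main obstacle''---the local study at the node---is the one-line part: the node makes $\partial P/\partial w$ vanish simply at $(1,1)$, giving $\omega$ simple poles at $q_1,q_2$, while $Q(1,1)\neq 0$ by the matrix-tree theorem. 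Without the argument at infinity your proof asserts, rather than proves, the key linear equivalence.

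A secondary gap: you correctly flag that non-degeneracy of the Hessian at $(1,1)$ is unclear for complex conductances, but you do not resolve it, and your proposed sum-of-squares form can genuinely degenerate over $\mathbb C$. The paper's resolution is to quote Theorem \ref{harnackthm} (for positive real conductances the curve is simple Harnack and $(1,1)$ is a node) and then observe that non-vanishing of the Hessian is a Zariski-open condition, which suffices for a statement about a rational map. Your computation can replace the citation at positive real points (there the form $-\sum wt(\gamma)(i_\gamma x+j_\gamma y)^2$ over single-cycle CRSFs is negative definite of rank $2$ because the cycle directions span), but you must still add the openness/genericity step to cover complex points. Also a detail: a CRSF class with $k\geq 2$ parallel cycles contributes $(2-e^{u}-e^{-u})^{k}=O(u^{2k})$ after summing over orientations, so such terms do \emph{not} contribute to the Hessian, contrary to your parenthetical; only single-cycle CRSFs matter there.
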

The rest of this section is devoted to the proof of this theorem. 
Consider the following commuting diagram:
\begin{center}
\begin{tikzcd}
\hat{C_0} \arrow["\phi"]{r}{} \arrow[hookrightarrow]{d}[swap]{}
 &C_0 \arrow[hookrightarrow,"i"]{r}{} \arrow[hookrightarrow]{d}[swap]{}
 &(\mathbb{C}^*)^2 \arrow[hookrightarrow]{d}{} \\
 \hat{C} \arrow["\pi"]{r}{}
 &C\arrow[hookrightarrow,swap]{r}{}
 &\mathcal N
\end{tikzcd},
\end{center}
where $\phi$ and $\pi$ are the normalization maps. We pull back (\ref{es1}) using $\phi^* i^*$ and use right-exactness of pullback to get the following exact sequence on $\hat{C_0}$:
\begin{equation}\label{es2}
\bigoplus_{v \in V} \mathcal{O}_{\hat{C_0}} \xrightarrow[]{\phi^* i^*\Delta} \bigoplus_{v \in V}\mathcal{O}_{\hat{C_0}} \ra \text{Coker }\phi^* i^*\Delta \ra 0.
\end{equation}

\begin{theorem}[\cite{K17}]\label{harnackthm}
For the space $\mathcal{R}^0_N(\mathbb{R}_{>0})$ of positive real valued points of $\mathcal{R}^0_N$, $(C_0,S,\nu)\in \mathcal S_N'$. Moreover $C_0$ is a simple Harnack curve.
\end{theorem}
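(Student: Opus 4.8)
The plan is to verify the three defining conditions of $\mathcal S_N'$ in turn: membership $C_0\in W$, the degree and position of $S$, and the linear equivalence (\ref{divcondition}). The first is cheap and the last is the real work. The two conditions cutting out $W$ I would dispatch using Theorem \ref{pfnlap}. That $P(1,1)=0$ is immediate: at the trivial connection every monodromy $m(\eta)$ equals $1$, and since every OCRSF contains at least one cycle, each summand of $\det\Delta$ carries a factor $(1-m(\eta))=0$. For the involution I would note that inverting the connection transposes the Laplacian: the off-diagonal entry of $\Delta(c,i)$ is $-c(v,v')\,i_{v'v}$ and the diagonal is connection-independent, so $\Delta(c,i^{-1})=\Delta(c,i)^{T}$ and hence $P(1/z,1/w)=\det\Delta(c,i^{-1})=\det\Delta(c,i)=P(z,w)$, so $\sigma$ preserves $C_0$.

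It remains, for $W$, to show $(1,1)$ is a node. In logarithmic coordinates $z=e^{u},w=e^{v}$ the identity $P(u,v)=P(-u,-v)$ forces $(1,1)$ to be a critical point, so the point is nondegeneracy of the Hessian $Q$. I would compute $Q$ from Theorem \ref{pfnlap} by extracting the degree-$2$ part of $\prod_{\eta}(1-m(\eta))$: an OCRSF with $k$ cycles contributes at order $k$, the two-cycle OCRSFs cancel in pairs under reversal of one cycle's orientation (which negates one factor and fixes the weight), and what survives is $Q(u,v)=-\sum_{T}\mathrm{wt}(T)\,\langle[\eta_T],(u,v)\rangle^{2}$, summed over cycle-rooted spanning trees $T$ with unique cycle $\eta_T$. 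Thus $(1,1)$ is a node exactly when the classes $[\eta_T]$ span $H_1(\T,\R)$ and the weighted form is nondegenerate; for positive real conductances this is negative definite, recovering the real node of Theorem \ref{harnackthm}, and for general complex conductances it holds away from a proper subvariety, consistent with the birational statement of Theorem \ref{thm1}.

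For the divisor, Lemma \ref{linebundle} already makes $i^{*}\mathrm{Coker}\,\Delta$ a line bundle $F$ with section $\delta_v$ whose zero divisor is $S$; it avoids $(1,1)$ because the fiber there is $\C^{V}/\mathrm{Im}\,\Delta_0\cong\C$ via $f\mapsto\sum_v f(v)$ (with $\mathrm{Im}\,\Delta_0=\mathbf 1^{\perp}$) and $[\delta_v]\mapsto 1\neq 0$. For the equivalence (\ref{divcondition}) the plan is to dualize (\ref{es1}). Since $\Delta$ is injective as a sheaf map, $F$ has projective dimension one, so $\mathcal{E}xt^1(F,\omega_{\mathcal N})\cong\mathrm{Coker}\,\Delta^{T}\otimes\omega_{\mathcal N}$; on the other hand Grothendieck duality for $C\hookrightarrow\mathcal N$ gives $\mathcal{E}xt^1(F,\omega_{\mathcal N})\cong F^{\vee}\otimes\omega_{C}$ for a line bundle on the Gorenstein curve $C$. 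Combining yields $F\otimes\mathrm{Coker}\,\Delta^{T}\cong\omega_{C}$; the transpose identity $\Delta^{T}=\sigma^{*}\Delta$ identifies $\mathrm{Coker}\,\Delta^{T}\cong\sigma^{*}F$, with divisor $\sigma(S)$; and pulling back along the normalization $\pi:\hat C\to C$, where $\pi^{*}\omega_{C}=\omega_{\hat C}(q_1+q_2)$, gives $S+\sigma(S)-q_1-q_2\equiv K_{\hat C}$. Taking degrees, $2\deg S=\deg\omega_{\hat C}+2=2g$, so $\deg S=g$, which also settles the degree claim.

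I expect the main obstacle to be this last step. The presentation (\ref{es1}) lives on the torus $(\C^*)^2$, and to run the duality on the compact toric surface $\mathcal N$ one must twist the two copies of $\mathcal{O}^{V}$ by the correct line bundles so that $\Delta$ becomes a morphism of sheaves whose determinant cuts out a member of $|\mathcal L|$; only with these twists chosen correctly does the duality target come out to be exactly $\omega_{C}$ rather than $\omega_{C}$ tensored with a spurious boundary bundle. Controlling the Gorenstein point at the node—so that $\mathcal{E}xt^1(F,\omega_{\mathcal N})\cong F^{\vee}\otimes\omega_{C}$ remains valid there and the two preimages $q_1,q_2$ enter precisely through $\pi^{*}\omega_{C}=\omega_{\hat C}(q_1+q_2)$—is the other delicate ingredient, and it is where the node established in the second paragraph is genuinely used.
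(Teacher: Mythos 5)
This statement is not proved in the paper at all: it is quoted verbatim from \cite{K17} and used as a black box (its only role is to guarantee that $(1,1)$ is a node for positive conductances, whence for an open set of complex conductances). So your proposal must be measured against the content of Kenyon's theorem itself, and there it has a genuine gap: you never prove that $C_0$ is a \emph{simple Harnack curve}, which is the heart of the statement. Your Hessian computation is fine as far as it goes --- the two-cycle OCRSF contributions do cancel in pairs, the surviving quadratic part is $-\sum_T \mathrm{wt}(T)\langle [\eta_T],(u,v)\rangle^2$, and for positive conductances this is negative definite, so $(1,1)$ is a real node --- but this is purely local information at a single point. Harnackness is a global condition on the real locus: by Mikhalkin's characterization \cite{M} it amounts to the map from $C_0(\C)$ to the amoeba being at most $2$-to-$1$, equivalently to the real locus having the maximal number of ovals in the prescribed position relative to the tentacles. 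Establishing this for the Laplacian spectral curve requires positivity arguments controlling the zeros of $P$ on \emph{every} torus $\{|z|=r_1,|w|=r_2\}$ (the resistor-network analogue of the Kenyon--Okounkov--Sheffield argument for dimers), and nothing in your proposal engages with that. A proof that stops at the node at $(1,1)$ proves a strictly weaker statement than the one quoted.

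The rest of your proposal does not really belong to this theorem: the verifications $P(1,1)=0$, $P(z,w)=P(1/z,1/w)$ via $\Delta(z,w)=\Delta(1/z,1/w)^T$, and the vanishing of first derivatives at $(1,1)$ are exactly the steps the paper carries out separately (and without invoking \cite{K17}) in its proof of the subsequent theorem $\rho_{G,v}(\mathcal{R}^0_N)\subseteq\mathcal{S}_N'$, so there you agree with the paper. For the divisor relation (\ref{divcondition}) your Grothendieck--duality route is genuinely different from the paper's, which factors $\mathrm{adj}\,\Delta(z,w)=U(z,w)V(z,w)^T$ to get $\mathrm{div}_{C_0}Q=S+\sigma(S)$ and then analyses the $1$-form $Q\,dz/\bigl(zw\,\partial P/\partial w\bigr)$ at the points at infinity using Corollary \ref{cyc}. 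Duality for determinantal curves in the spirit of \cite{CT79} could plausibly be made to work, but, as you yourself concede, your sketch is incomplete precisely where the paper's explicit argument does the work: without the correct twists at infinity (which the paper pins down via the external-OCRSF statement, Corollary \ref{cyc}, showing the Newton polygon of $Q$ is strictly inside $N$), the duality only determines $\mathrm{Coker}\,\Delta^T\otimes F$ up to a line bundle supported on the boundary divisor of $\mathcal{N}$, which is not enough to conclude (\ref{divcondition}).
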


$P(z,w)=P(\frac{1}{z},\frac{1}{w})$ follows from $\Delta(z,w)=\Delta(\frac{1}{z},\frac{1}{w})^T$. $P(1,1)=0$ follows from the observation that the constant functions are discrete harmonic that is they are in the kernel of $\Delta(1,1)$. 
Differentiating the expression $P(z,w)=\sum_{\text{CRSFs }\gamma}wt(\gamma)(2-z^iw^j-z^{-i}w^{-j})$(where $(i,j)=[\gamma]$), we see that 

$$
\frac{\partial{P}(1,1)}{\partial z}=\frac{\partial{P}(1,1)}{\partial w}=0,
$$

so $(1,1)$ is a singular point. For all positive real points, Theorem \ref{harnackthm} tells us that $(1,1)$ is a node. Since nodes are characterized by non-vanishing of the Hessian, an open condition, $(1,1)$ is a node for all points in an open subset of $\mathcal{R}^0_N$.\\
\begin{lemma}
Let $\hat{C}$ be the normalization of $C$ and let $q_1,q_2 \in \hat{C}$ be the two points in the fiber over the node $(1,1)$. The divisor $S$ satisfies 

$$
S+\sigma(S)-q_1-q_2 \equiv K_{\hat{C}}.
$$
\end{lemma}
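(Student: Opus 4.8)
The plan is to promote the self-adjointness $\Delta(z,w)^{T}=\Delta(1/z,1/w)$ to an isomorphism of line bundles on $\hat{C}$, and then read off the divisor relation by comparing zero divisors of sections. Write $\mathcal F=\mathrm{Coker}\,\Delta$ and $L=i^{*}\mathcal F$; by construction the section $\delta_{v}$ of $L$ has zero divisor $S$, so $L\cong\mathcal O_{C_{0}}(S)$ on the finite part. Since $\Delta$ is injective, I would dualize the resolution (\ref{es1}) by applying $\mathcal{H}om(-,\mathcal O)$; because $\mathcal F$ is torsion, $\mathcal{H}om(\mathcal F,\mathcal O)=0$ and the long exact sequence collapses to $\mathcal{E}xt^{1}(\mathcal F,\mathcal O)\cong\mathrm{Coker}\,\Delta^{T}$. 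The symmetry of $\Delta$ identifies the map $\Delta^{T}$ with $\sigma^{*}\Delta$, so $\mathcal{E}xt^{1}(\mathcal F,\mathcal O)\cong\sigma^{*}\mathcal F$. This is the engine: it trades the transpose for the geometric involution $\sigma$, exactly as the sequence (\ref{es2}) pulled back to $\hat{C_{0}}$ is set up to exploit.

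I would then run this computation on the compact toric surface $\mathcal N$, regarding $\Delta$ as a map of the toric line bundles attached to the Newton polygon and extending $L$ to the compactified curve $C=\pi(\hat{C})$. As a reduced curve on the smooth surface $\mathcal N$, $C$ is Gorenstein, so Grothendieck duality for the embedding $\iota\colon C\hookrightarrow\mathcal N$ gives $\mathcal{E}xt^{1}_{\mathcal N}(\iota_{*}L,\mathcal O_{\mathcal N})\cong\iota_{*}\bigl(L^{-1}\otimes\omega_{C}\otimes\omega_{\mathcal N}^{-1}|_{C}\bigr)$, with $\omega_{C}=(\omega_{\mathcal N}\otimes\mathcal O_{\mathcal N}(C))|_{C}$ by adjunction. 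Combined with the previous paragraph this yields $\sigma^{*}L\cong L^{-1}\otimes\omega_{C}\otimes\omega_{\mathcal N}^{-1}|_{C}$. Pulling back to the normalization and using the two structural identities $\pi^{*}\omega_{C}\cong\omega_{\hat{C}}(q_{1}+q_{2})$ for a one-nodal curve and $\omega_{\mathcal N}\cong\mathcal O_{\mathcal N}(-\partial\mathcal N)$ for a toric surface converts the relation into $\hat{L}\otimes\sigma^{*}\hat{L}\cong\omega_{\hat{C}}(q_{1}+q_{2}+D_{\infty})$, where $\hat L=\phi^{*}L$ and $D_{\infty}=\pi^{*}(C\cdot\partial\mathcal N)$ is the divisor at infinity. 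Note that the term $q_{1}+q_{2}$ is produced precisely by the discrepancy between the dualizing sheaf of the nodal $C$ and the canonical sheaf of $\hat{C}$, which is the conceptual source of the node correction in the statement; that $\{q_{1},q_{2}\}$ is $\sigma$-stable (since $\sigma(1,1)=(1,1)$) is what makes the resulting identity $\sigma$-symmetric.

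It then remains to match divisors of sections. Writing $\hat{L}\cong\mathcal O_{\hat{C}}(S+E_{\infty})$, where $E_{\infty}$ records the boundary behaviour of $\delta_{v}$ across the points at infinity, the bundle identity becomes $S+\sigma(S)+E_{\infty}+\sigma(E_{\infty})\equiv K_{\hat{C}}+q_{1}+q_{2}+D_{\infty}$, and the theorem reduces to the single boundary identity $E_{\infty}+\sigma(E_{\infty})=D_{\infty}$. I expect this to be the main obstacle. Establishing it requires the explicit leading-order behaviour of $\Delta$, and hence of $\mathrm{Coker}\,\Delta$ and $\delta_{v}$, along each toric boundary divisor. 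This is exactly the information supplied by the external-OCRSF analysis: the leading coefficients of $P$ along an edge $E$ of $N$, and the cokernel there, are governed by the extremal and external forests of Lemmas \ref{crsfextremal} and \ref{crsfexternal}. Together with the identification (through $\nu$) of the points at infinity with zig-zag paths and the fact that $\sigma$ reverses the orientation of a zig-zag path, this pairs $p_{\alpha}$ with $p_{\bar\alpha}$ and forces $E_{\infty}$ and $\sigma(E_{\infty})$ to sum to the full intersection $D_{\infty}$; transversality of $C$ with $\partial\mathcal N$, guaranteed by minimality and the Harnack property, keeps the relevant multiplicities under control.

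Finally, as an independent cross-check and a route that bypasses the boundary bookkeeping, I would note the continuity argument already foreshadowed by the openness of the node condition. By Theorem \ref{harnackthm} the triple $(C_{0},S,\nu)$ lies in $\mathcal S_{N}'$ for every positive real network, so the class $S+\sigma(S)-q_{1}-q_{2}-K_{\hat{C}}\in\mathrm{Pic}^{0}(\hat{C})$ vanishes on $\mathcal R_{N}^{0}(\R_{>0})$. This class varies holomorphically in a family of Prym/Jacobian varieties over the open locus of $\mathcal R_{N}^{0}$ on which $(1,1)$ is a node, and the positive real locus is a totally real subvariety of full dimension there; hence the identity theorem forces the class to vanish identically, just as in the Hessian argument used above for the node. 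I would present the sheaf-theoretic computation as the main proof and retain the density argument as confirmation.
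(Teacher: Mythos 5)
Your approach is genuinely different from the paper's: where you run coherent duality abstractly ($\mathcal{E}xt^{1}(\mathcal F,\mathcal O)\cong \mathrm{Coker}\,\Delta^{T}\cong\sigma^{*}\mathcal F$, then Grothendieck duality and adjunction on $\mathcal N$), the paper works with an explicit meromorphic $1$-form: it takes $Q(z,w)$ to be the minor of $\Delta$ with the row and column of $v_{0}$ removed, factors $\mathrm{adj}\,\Delta=U V^{T}$ to get $\mathrm{div}_{C_{0}}Q=S+\sigma(S)$, and studies $\omega=Q\,dz/(zw\,\partial P/\partial w)$, whose simple poles at $q_{1},q_{2}$ come from the node. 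Your formal skeleton is sound: the transpose-to-$\sigma$ trade, $\pi^{*}\omega_{C}\cong\omega_{\hat C}(q_{1}+q_{2})$, and $\omega_{\mathcal N}\cong\mathcal O_{\mathcal N}(-\partial\mathcal N)$ are all correct (modulo routine care with the quotient singularities of $\mathcal N$ at the torus-fixed points, which $C$ avoids), and the paper's $\omega$ is precisely the concrete incarnation of your isomorphism $L\otimes\sigma^{*}L\cong\omega_{C}\otimes\omega_{\mathcal N}^{-1}|_{C}$. Up to that point the two routes carry the same content.

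The gap is exactly where you predicted it, and your sketch does not close it. On $(\mathbb{C}^{*})^{2}$ the trade $\mathcal{E}xt^{1}\cong\sigma^{*}\mathcal F$ is clean because the resolution uses trivial bundles; on $\mathcal N$ any extension of $\Delta$ requires choosing toric twists, the dual of the twisted map agrees with $\sigma^{*}\Delta$ only up to those twists, and that discrepancy is precisely your $E_{\infty}$. Asserting that the pairing $p_{\alpha}\leftrightarrow p_{\bar\alpha}$ of points at infinity under $\sigma$ together with ``transversality'' forces $E_{\infty}+\sigma(E_{\infty})=D_{\infty}$ is not an argument: a priori $E_{\infty}$ could have poles at some points at infinity compensated by zeros at others, and Lemmas \ref{crsfextremal}, \ref{crsfexternal} do not by themselves bound it. What closes the analogous step in the paper is a two-part mechanism you never supply: (i) effectivity, from Corollary \ref{cyc} --- $Q$ is the CRSF partition function of $G$ with $v_{0}$ deleted, so its Newton polygon is \emph{strictly} contained in $N$, whence $\omega$ has no poles at infinity; and (ii) the degree count $\deg\bigl(S+\sigma(S)-q_{1}-q_{2}\bigr)=2g-2=\deg K_{\hat C}$, which then forces the orders at infinity to sum to zero and hence vanish individually. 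You would need the same effectivity-plus-degree argument, or the local block analysis of $\Delta$ at each point at infinity (which the paper carries out later, when proving that $\delta_{v_{0}}$ has no zeros or poles at infinity for the sequence (\ref{es3})), to pin down $E_{\infty}$. Your fallback continuation argument is legitimate in outline --- Theorem \ref{harnackthm} already asserts (\ref{divcondition}) on the positive real locus, and a holomorphic section of the relative $\mathrm{Pic}^{0}$ over the one-node locus vanishing on a maximal-dimensional totally real submanifold vanishes identically --- but it outsources the substance to \cite{K17} and still requires setting up that relative Picard family; as written, both routes are plans rather than proofs at the one step that carries the real content.
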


\begin{proof}
Let $Q(z,w)$ be the minor of $\Delta(z,w)$ with the row and column corresponding to $v_0$ removed. Consider the meromorphic 1-form
$$\omega=\frac{Q(z,w)dz}{zw\frac{\partial P(z,w)}{\partial w}}.$$
For smooth $(z,w) \in C$, we have $\text{corank }\Delta(z,w)=1$. Therefore we can write $\text{adj }\Delta(z,w)=U(z,w)V(z,w)^T$ for some $U(z,w) \in \text{Ker }\Delta(z,w),V(z,w) \in \text{Coker }\Delta(z,w)$. By definition, $S$ is the set of points in $C_0$ where the component $V(z,w) \cdot \delta_{v_0}$ of $V(z,w)$ vanishes. We have $\text{Ker }\Delta(z,w)\cong \text{Coker }\Delta(z,w)^T=\text{Coker }\Delta(\frac{1}{z},\frac{1}{w})$, so $\sigma(S)$ are the points where the component $U(z,w) \cdot \delta_{v_0}$ vanishes. Since $Q(z,w)= (U(z,w) \cdot \delta_{v_0})( V(z,w) \cdot \delta_{v_0})$, we have
$$
\text{div}_{C _0} Q(z,w)=S+\sigma(S),
$$

Since $C$ has a node at $(1,1)$, $\frac{\partial P(z,w)}{\partial w}$ has a simple zero at $(1,1)$ and so $\omega$ has simple poles at $q_1,q_2$. Therefore, the divisor of $\omega$ on the complement of the points at infinity is $S+\sigma(S)-q_1-q_2$, which has degree $2g-2$. It remains to identify the zeros and poles of $\omega$ at the points at infinity.\\
The order of vanishing of the 1-form
$$
\omega_{ij}:=\frac{z^{i-1}w^{j-1}dz}{\frac{\partial P(z,w)}{\partial w}}
$$
at the point at infinity corresponding to the primitive integral edge $E$ is given by the twice the signed area of the triangle formed by $E$ and the point $(i,j)$ minus one (where area is positive for points $(i,j)$ inside $N$). $Q(z,w)$ is the partition function of CRSFs on the graph $G'$ obtained from $G$ by deleting the vertex $v_0$. By Corollary \ref{cyc}, the Newton polygon of $Q(z,w)$ is strictly contained in $N$. Therefore the order of vanishing of $\omega$ must be non-negative at all points at infinity, that is $\omega$ has no poles at these points. The divisor of $\omega$ on the complement of the points at infinity has degree $2g-2$, which is the degree of the canonical class. Therefore $\omega$ must have an equal number of zeroes and poles at the points at infinity and therefore $\omega$ has no zeroes at infinity either.
\end{proof}

\subsection{Discrete Abel-Prym map}

Let $\mathcal Z=\{\alpha_1,...,\alpha_{2n}\}$ be an enumeration of oriented zig-zag paths in $G$ such that $\nu(\alpha_i)$ correspond to the primitive integral edges of the Newton polygon in cyclic order. We have $\sigma(\alpha_i)=\alpha_{n+i}$. 
Define $d':V(\tilde{G})\cup F(\tilde{G})  \ra \mathbb Z^{\mathcal Z}
$ as follows:\\
Set $d'(v)=0$ for some vertex $v$. For any vertex or face $u$, let $\tilde{\gamma}$ be a path from $v$ to $u$ in $\tilde{G}$ and let $\gamma$ be its image under the universal covering map $\tilde{G} \ra G$. Let  
$$
d'(u)=d'(v)+\sum_{\alpha \in \mathcal Z} ([\gamma],[\alpha])_\mathbb T \alpha, 
$$
where $(\cdot,\cdot)_\mathbb T$ is the intersection pairing on $H_1(\mathbb T,\Z)$. \\
Define the inclusion 
\begin{align*}
H_1(\T,\Z) &\hookrightarrow \Z^{\mathcal Z}\\
h &\mapsto \sum_{\alpha \in \mathcal Z}(h,[\alpha])_\T\alpha.
\end{align*}
Abusing notation, we will denote the homology class $h$ and its image in $\Z^{\mathcal Z}$ by the same letter $h$. Observe that $d'$ is equivariant with respect to the $H_1(\T,\Z)$ action, that is, 
$$
d'(h \cdot u)=h \cdot d'(u),
$$
for all $u \in V(\tilde{G})\cup F(\tilde{G})$. Define the discrete Abel map(\cite{Fock15}) $d:V(\tilde{G})\cup F(\tilde{G})  \ra Cl(\hat{C})$ as the composition $ \nu \circ d'$. For a homology class $h=(i,j)$ we have $\text{div}_{\hat{C}}z^iw^j=\nu(h)$, so $d$ descends to a well defined map $d:V(\tilde{G})\cup F(\tilde{G})  \ra Cl(\hat{C})$. We also the define the \textit{discrete Abel-Prym map }
 
\begin{align*}
d_P:  V(\tilde{G})\cup F(\tilde{G}) &\ra \text{Pr}(\hat{C},\sigma) \\
d_P&=\frac{1}{2} I_P \circ  d.
\end{align*}

The discrete Abel map provides us the following consistent way to extend (\ref{es2}):
\begin{equation}\label{es3}
\bigoplus_{v \in V} \mathcal{O}_{ \hat C}(d(v)-\sum_{\alpha_i \in \mathcal Z:v \in \alpha}\alpha_i -d(v_0)) \xrightarrow[]{\phi^*i^*\Delta} \bigoplus_{v \in V}\mathcal{O}_{\hat C}(d(v)-d(v_0)) \ra \text{Coker }\phi^*i^*\Delta \ra 0.
\end{equation}
We wish to identify $\text{Coker }\phi^*i^*\Delta$. 
\begin{lemma}
The divisor of the image of the section $\delta_{v_0}$ in $\text{Coker }\phi^*i^*\Delta$ in (\ref{es3}) restricted to $C_0$ is $S$.
\end{lemma}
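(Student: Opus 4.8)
The plan is to exploit the fact that the twists introduced in (\ref{es3}) are concentrated entirely at the points at infinity, so that over the finite curve $C_0$ the twisted sequence is indistinguishable from (\ref{es2}); the assertion then collapses to the definition of $S$. The first observation is that every divisor class appearing as a twist in (\ref{es3}) is supported at the points at infinity of $\hat{C}$. Indeed $d=\nu \circ d'$ takes values in the subgroup of $Cl(\hat{C})$ generated by the classes $\nu(\alpha)$, $\alpha \in \mathcal{Z}$, each of which is a point at infinity; hence both $d(v)-d(v_0)$ and $d(v)-\sum_{\alpha_i \ni v}\alpha_i-d(v_0)$ are supported away from $\hat{C_0}=\hat{C}\setminus\{\text{points at infinity}\}$. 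Consequently each summand $\mathcal{O}_{\hat{C}}(\,\cdot\,)$ restricts to $\mathcal{O}_{\hat{C_0}}$ through its constant section $1$, which has neither zeros nor poles on $\hat{C_0}$.

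Next I would check that, under these canonical trivializations, the restriction to $\hat{C_0}$ of the twisted morphism $\phi^* i^*\Delta$ of (\ref{es3}) is exactly the morphism of (\ref{es2}). This is the role of the consistent extension preceding (\ref{es3}): the entries of $\phi^* i^*\Delta$ are the same pulled-back functions in the two sequences, and trivializing by $1$ introduces no extra factor over $\hat{C_0}$. It follows that $\text{Coker }\phi^* i^*\Delta$ of (\ref{es3}), restricted to $\hat{C_0}$, is canonically isomorphic to $\text{Coker }\phi^* i^*\Delta$ of (\ref{es2}), and that the section $\delta_{v_0}$ of (\ref{es3})---the image of $1$ in the trivial summand $\mathcal{O}_{\hat{C}}(d(v_0)-d(v_0))=\mathcal{O}_{\hat{C}}$---is carried to the section $\delta_{v_0}$ of (\ref{es2}).

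Finally I would invoke right-exactness of pullback, giving $\text{Coker }\phi^* i^*\Delta=\phi^* i^*\text{Coker }\Delta$, a line bundle by Lemma \ref{linebundle}; under this identification the restricted $\delta_{v_0}$ is the $\phi^*$-pullback of the image of $\delta_{v_0}$ in $i^*\text{Coker }\Delta$, whose divisor of zeroes is $S$ by definition. Since $S$ lies in the smooth locus $C_0\setminus(1,1)$, the normalization $\phi$ identifies it with a divisor on $\hat{C_0}$, and the previous paragraph shows that the finite part of the divisor of $\delta_{v_0}$ in $\text{Coker }\phi^* i^*\Delta$ agrees with it. The remaining zeros and poles of $\delta_{v_0}$ sit at the points at infinity and are invisible to the restriction to $C_0$, so the divisor restricted to $C_0$ is precisely $S$.

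The step I expect to be the main obstacle is the second one: confirming that the twists dictated by the discrete Abel map exactly match the orders of vanishing at infinity of the monomial entries of $\Delta$, so that (\ref{es3}) is a genuine complex of line bundles restricting to (\ref{es2}) over $\hat{C_0}$. Once this bookkeeping at infinity is secured, the divisor statement over $C_0$ is immediate, because all twisting is concentrated there.
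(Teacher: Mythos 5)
Your proof is correct and takes essentially the same approach as the paper: the paper's one-line proof (``the pullback $\phi^* i^*$ preserves zeroes and poles of sections'') relies implicitly on exactly the bookkeeping you spell out, namely that all twists in (\ref{es3}) are supported at the points at infinity, so over $\hat{C_0}$ the sequence (\ref{es3}) is canonically identified with (\ref{es2}) and the divisor of the image of $\delta_{v_0}$ there is $S$ by definition.
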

\begin{proof}
The pullback $\phi^* i^*$ preserves zeroes and poles of sections.
\end{proof}
So we only have to identify the zeroes and poles of the image of $\delta_{v_0}$ at the points at infinity.
\begin{lemma}
$\delta_{v_0}$ has no zeroes or poles at infinity. 
\end{lemma}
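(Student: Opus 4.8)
The plan is to treat poles and zeros by different means: the absence of poles follows from a global regularity argument, while the absence of zeros at the points at infinity is a pointwise statement about the fiber of the cokernel, where the combinatorics of Corollary \ref{cyc} does the real work. For the poles, observe that the $v_0$-summand of the target bundle in (\ref{es3}) is $\mathcal{O}_{\hat C}(d(v_0)-d(v_0))=\mathcal{O}_{\hat C}$, so $\delta_{v_0}$ is the global regular section equal to $1$ in the $v_0$-slot and $0$ in the others. Since the cokernel projection $\bigoplus_v\mathcal{O}_{\hat C}(d(v)-d(v_0))\to\text{Coker }\phi^*i^*\Delta$ is a surjection of $\mathcal{O}_{\hat C}$-modules on the smooth curve $\hat C$, it carries global regular sections to global regular sections; hence the image of $\delta_{v_0}$ is holomorphic on all of $\hat C$, and in particular has no poles at the points at infinity.

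For the zeros, the previous lemma gives that the finite zero divisor of this image is $S$, of degree $g$, so together with the no-pole statement we have $\text{div}(\delta_{v_0})=S+D_\infty$ with $D_\infty\ge 0$ supported at infinity, and it remains to show $D_\infty=0$. I would argue pointwise: fix a point at infinity $p_\alpha=\nu(\alpha)$ on a boundary edge $E=\langle V_1,V_2\rangle$ of $N$ and choose a local coordinate. The fiber $(\text{Coker }\phi^*i^*\Delta)_{p_\alpha}$ is one-dimensional, spanned dually by a covector $\xi_\alpha$ annihilating the image of the fiber map $\Delta_\alpha:=(\phi^*i^*\Delta)_{p_\alpha}$, and the value of the image of $\delta_{v_0}$ in this fiber is exactly $\xi_\alpha(v_0)$; thus $p_\alpha\notin D_\infty$ is equivalent to $\xi_\alpha(v_0)\ne 0$. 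The key point is that the discrete Abel map twist in (\ref{es3}) is tailored so that, in the induced trivialization at $p_\alpha$, the order $\text{ord}_{p_\alpha}\Delta_{v'v}$ of each matrix entry is governed by the intersection pairing $(\,\cdot\,,[\alpha])_\T$ corrected by the term $\sum_{\alpha_i\ni v}\alpha_i$, so that only the entries surviving to leading order remain. These surviving entries assemble into an operator built from the external OCRSFs on $E$ (Lemma \ref{crsfexternal}), every one of which is a disjoint union of oriented cycles covering all of $V(G)$ by Corollary \ref{cyc}. Consequently $\Delta_\alpha$ has corank one, and its cokernel covector $\xi_\alpha$ is constant along each cycle and nonvanishing on every vertex; in particular $\xi_\alpha(v_0)\ne 0$. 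This forces $D_\infty=0$ and completes the argument.

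The hard part will be the identification of the frozen operator $\Delta_\alpha$ and the verification that its cokernel covector is nowhere zero. This requires (i) computing the orders $\text{ord}_{p_\alpha}\Delta_{v'v}$ and checking that the Abel-map twist exactly cancels the monomial contributions, so that the leading operator is the incidence operator of a union-of-cycles OCRSF on $E$ and nothing larger, using Lemmas \ref{crsfextremal} and \ref{crsfexternal} to pin down which edge at each vertex is selected; and (ii) deducing from the cycle structure that $\ker$ and $\text{coker}$ of $\Delta_\alpha$ are one-dimensional with nonvanishing entries. A useful consistency check is a degree count: since the zig-zag paths pair up under $\sigma$ with opposite homology, each $d(v)$ has degree zero, so $\deg\mathcal{O}_{\hat C}(d(v)-d(v_0))=0$ and the target bundle in (\ref{es3}) has degree $0$; the Euler-characteristic computation on (\ref{es3}) should then return $\deg(\text{Coker }\phi^*i^*\Delta)=g=\deg S$, confirming that no degree is left over for $D_\infty$.
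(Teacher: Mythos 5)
Your reduction is sound up to a point: the image of $\delta_{v_0}$ is a regular global section of the cokernel sheaf, so the absence of poles is indeed automatic once one knows the cokernel is a line bundle near infinity, and nonvanishing at a point at infinity $\nu(\alpha)$ is equivalent to nonvanishing of the $v_0$-entry of the cokernel covector of the frozen operator there. The genuine gap is the mechanism you propose for that nonvanishing (and for the corank-one claim it rests on). The frozen operator at $\nu(\alpha)$ is not ``an operator built from the external OCRSFs on $E$'', and its cokernel covector is not constant along cycles. In fact your claimed structure is internally inconsistent: a weighted incidence operator of a disjoint union of oriented cycles covering $V(G)$ is a weighted permutation matrix, hence invertible; while a Laplacian-type operator supported on such a union of cycles has corank equal to the number of cycles when the monodromies around them are trivial (and external OCRSFs generally consist of several cycles), and is invertible when they are not. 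Either way one cannot conclude ``corank one with a covector that is constant and nonvanishing on every cycle''. Lemma \ref{crsfexternal} and Corollary \ref{cyc} control which monomials of $\det \Delta$ (and of its minors) survive on the boundary of $N$ --- in the paper they are used in the canonical-class lemma, via the Newton polygon of $Q$ --- but they do not determine the matrix structure of $\Delta$ at $\nu(\alpha)$, which is what any covector computation needs.

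What the paper actually does at $\nu(\alpha)$: order the vertices so that those on the zig-zag path $\alpha$ come first; in the trivialization induced by the discrete Abel twist, the frozen operator is block upper-triangular, $\begin{pmatrix}\Delta_1 & B\\ 0 & \Delta_2\end{pmatrix}$, where $\Delta_1$ is the Laplacian restricted to $\alpha$ with the monodromy evaluated at the point at infinity (singular precisely because we sit at $\nu(\alpha)$, with generically one-dimensional kernel) and $\Delta_2$ is the Laplacian on the remaining vertices, generically invertible. This yields corank one (hence, with Lemma \ref{linebundle}, the line-bundle property at infinity), and the cokernel covector is $(v,-(\Delta_2^*)^{-1}B^*v)$ with $v\in\ker\Delta_1^*$; its entries are limits of cofactors of $\Delta$ and hence are generically nonzero, which is exactly the nonvanishing needed in the $v_0$-slot. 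Note that this is a genericity statement about rational functions of the conductances, not a combinatorial identity --- for special conductances cofactors do vanish --- so no structural ``constant along cycles'' argument can replace it. Finally, your degree-count consistency check is circular: on the spectral curve $\phi^*i^*\Delta$ has generic corank one, so it has a kernel line bundle $K$, and the degree of the cokernel equals the degree of the target bundle minus that of the source bundle plus $\deg K$; computing $\deg K$ is equivalent to the problem you are trying to solve.
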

\begin{proof}
Let $\alpha$ be an oriented zig-zag path. Let $x$ be a local parameter in a neighborhood $U$ of $\alpha$ disjoint from the other points at infinity with a simple zero at $\alpha$. We trivialize the line bundles in (\ref{es3}) as follows:
\begin{align*}
\mathcal O(-k \alpha )(U) & \xrightarrow[]{\cong}\mathcal O(U)\\
f & \mapsto x^{-k} f
\end{align*}
Let $z=a x^m + O(x^{m+1})$ and $w=b x^n+O(x^{n+1})$ be the expansions in the local coordinate $x$. Let us order the vertices so that the vertices on the zig-zag path appear first. Then the Laplacian matrix at $\alpha$ has the following block form:
$$
\Delta=\begin{pmatrix} 
\Delta_1 & B \\
0 & \Delta_2 
\end{pmatrix}+O(x),
$$
where $\Delta_1$ is the restriction of the Laplacian to the zig-zag path $\alpha$ and $\Delta_2$ is the restriction to the rest of the graph, and where $z$ and $w$ are replaced with $a$ and $b$ respectively. Since we are at $\alpha$, $\Delta_1$ is singular. Generically $\text{dim Ker }\Delta_1=1$ and $\Delta_2$ is invertible. In particular,  the fiber of $\text{Coker }\phi^*i^*\Delta$ at $\alpha$ is one dimensional. Combined with Lemma \ref{linebundle}, we get that $\text{Coker }\phi^*i^*\Delta$ is a line bundle. \\

Let $v \in \text{Ker }\Delta_1^*$. Then we have
$$
\text{Ker }\Delta^*=(v,-(\Delta_2^*)^{-1}B^*v)+O(x).
$$
Since generically none of the entries in $\text{Ker }\Delta^*$ is $0$, and since these entries are the cofactors of $\Delta$, we see that $\delta_{v_0}$ has no poles or zeros at $\alpha$. Since $\alpha$ was arbitrary, $\delta_{v_0}$ has no zeroes or poles at infinity.
\end{proof}
\begin{corollary}
$\text{Coker }\phi^*i^*\Delta=\mathcal{O}(S)$. For any other vertex $v$, let $S_v$ denote the divisor of the image of the rational section $\delta_v$ restricted to $\hat{ C_0}$. Then we have 
$$
\text{div}_{\hat{C}} \delta_v =S_v+ d(v)-d(v_0) \equiv S.
$$
\end{corollary}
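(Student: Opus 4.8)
The plan is to deduce both assertions from the two preceding lemmas together with the precise twisting built into the exact sequence (\ref{es3}).

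First I would establish $\text{Coker }\phi^*i^*\Delta \cong \mathcal{O}(S)$. The crucial observation is that the $v_0$-th summand of the target of (\ref{es3}) is $\mathcal{O}_{\hat C}(d(v_0)-d(v_0)) = \mathcal{O}_{\hat C}$, so the basis vector $\delta_{v_0}$ is a genuine (non-rational) global section, and its image in the cokernel is holomorphic. The first of the two preceding lemmas gives that its divisor on the finite part $C_0$ is exactly $S$, while the second gives that it has no zeros or poles at the points at infinity. Hence $\text{div}_{\hat C}\delta_{v_0} = S$ is effective, and since the cokernel is a line bundle (by Lemma \ref{linebundle} and the local computation in the preceding lemma) admitting a global section with divisor $S$, it is isomorphic to $\mathcal{O}(S)$.

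Next, for an arbitrary vertex $v$, I would compute $\text{div}_{\hat C}\delta_v$ by splitting it into its finite and infinite parts. Here $\delta_v$ is the canonical rational section $1$ of the $v$-th summand $\mathcal{O}_{\hat C}(d(v)-d(v_0))$, and its image in $\text{Coker }\phi^*i^*\Delta = \mathcal{O}(S)$ is a rational section. Over $\hat{C_0}$ the bundles $\mathcal{O}_{\hat C}(d(v)-d(v_0))$ are trivial, since $d(v)-d(v_0)$ is supported at infinity; there (\ref{es3}) restricts to (\ref{es2}) and the image of $\delta_v$ has divisor $S_v$ by definition. At the points at infinity, the canonical section $1$ of $\mathcal{O}_{\hat C}(d(v)-d(v_0))$ carries the divisor $d(v)-d(v_0)$, and I would argue, via the same local analysis at each zig-zag path $\alpha$ used in the preceding lemma, that projecting to the cokernel introduces no further zeros or poles there: the relevant entries of $\text{Ker }\Delta^*$ are cofactors of $\Delta$ which generically do not vanish, and the source twist $-\sum_{\alpha_i \ni v}\alpha_i$ is matched exactly to the local orders of $\Delta$. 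This yields $\text{div}_{\hat C}\delta_v = S_v + d(v)-d(v_0)$.

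Finally, the linear equivalence is immediate: $\delta_v$ and $\delta_{v_0}$ are both rational sections of the single line bundle $\mathcal{O}(S)$, so their divisors are linearly equivalent, giving $S_v + d(v)-d(v_0) \equiv S$. The main obstacle is the infinity computation of the previous step: one must verify that the intersection-pairing definition of the discrete Abel map $d$ encodes precisely the orders of vanishing of the image of $\delta_v$ at each point at infinity, so that the twist $d(v)-d(v_0)$ absorbs exactly the behaviour of the cokernel map there and leaves no spurious zeros or poles. This is where the consistency of the extension (\ref{es3}) — and hence the construction of $d$ through zig-zag intersection numbers — is genuinely being used.
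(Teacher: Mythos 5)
Your proposal is correct and takes essentially the same approach as the paper: the corollary is stated there without a separate proof precisely because it follows from the two preceding lemmas in the way you describe, namely the image of $\delta_{v_0}$ is a global section of the cokernel line bundle with divisor $S$ (giving $\text{Coker }\phi^*i^*\Delta\cong\mathcal{O}(S)$), and for a general vertex the divisor of $\delta_v$ splits into the finite part $S_v$ plus the twist $d(v)-d(v_0)$ at infinity, since the cofactor non-vanishing argument in the second lemma applies to every entry of $\text{Ker }\Delta^*$, not just the $v_0$-th. The final linear equivalence is, as you say, immediate from all the $\delta_v$ mapping to rational sections of the single line bundle $\mathcal{O}(S)$.
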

Let $e=\frac{1}{2}\pi_1(I(S)-I(q_1)-I(q_2)-\pi^*\Delta_C)+d_P(v_0)$. Define for each vertex $v\in G$,
$$
\psi_v(x):=\frac{\eta(x+d_P(v)-e)}{\eta(d_P(v)-e)}E_{d(v)-d(v_0)}(x).
$$
By Theorem \ref{prt}, $\psi_v$ is a rational section of $\mathcal{O}(S)$ with divisor $S_v+d(v)-d(v_0)$.
\begin{lemma}\label{lemcoker}
The cokernel map is given by $\delta_v \mapsto  \psi_v$.
\end{lemma}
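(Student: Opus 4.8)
The plan is to compare the two families of sections divisor-by-divisor and then remove the remaining scalar ambiguity using the defining relation of the cokernel. By the preceding corollary, the image of $\delta_v$ under the cokernel map is a rational section of $\mathcal{O}(S)=\text{Coker }\phi^*i^*\Delta$ with divisor $S_v+d(v)-d(v_0)$, and by Theorem \ref{prt} the section $\psi_v$ is a rational section of the same line bundle with the same divisor. Two rational sections of a line bundle on the integral projective curve $\hat C$ with equal divisors differ by a nonzero constant, so I would write $\text{image}(\delta_v)=\lambda_v\psi_v$ with $\lambda_v\in\C^*$. Because the cokernel map is canonical only up to the $\C^*$ of automorphisms of $\mathcal{O}(S)$, the lemma is \emph{equivalent} to the assertion that all $\lambda_v$ are equal: a single global rescaling then identifies the cokernel map with $v\mapsto\psi_v$.

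To prove that $\lambda_v$ is independent of $v$, I would exploit exactness of (\ref{es3}). Since the quotient map composed with $\phi^*i^*\Delta$ vanishes, for every vertex $j$ one gets the relation
\begin{equation*}
\sum_v \Delta_{vj}(z,w)\,\text{image}(\delta_v)=0,
\end{equation*}
so $(\text{image}(\delta_v))_v$ generates the generically one-dimensional left kernel of $\Delta(z,w)$ at each point of $\hat C_0$ (this being exactly what it means for $\mathcal{O}(S)$ to be the cokernel, compatibly with the factorization $\text{adj }\Delta=UV^T$ used earlier). The crux is to verify that the theta sections satisfy the identical relation
\begin{equation*}
\sum_v \Delta_{vj}(z,w)\,\psi_v=0\qquad\text{for every vertex }j,
\end{equation*}
i.e.\ that $(\psi_v)_v$ is itself a section of the left kernel. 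Granting this, $(\psi_v)$ and $(\lambda_v\psi_v)$ both lie in the one-dimensional left kernel at a generic point, hence are proportional by a single rational function $\mu(z,w)$; comparing components forces $\mu\equiv\lambda_v$ for all $v$, so $\mu$ is constant and the $\lambda_v$ coincide.

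The main obstacle is establishing the second displayed relation for $\psi_v$. Expanded at a vertex $j$ it is a local identity whose coefficients are the conductances $c(j,v)$ together with the connection/monodromy factors, and it must be matched against the ratios of Prym theta functions and the prime-form factor $E_{d(v)-d(v_0)}$ defining $\psi_v$. This is precisely where an addition theorem for the theta function $\eta$ on $\text{Pr}(\hat C,\sigma)$ enters, and I expect it to follow from Fay's identity (\cite{Fay89}), as the spider and $Y-\Delta$ moves do elsewhere in the paper; in carrying it out one must track the orders at the points at infinity (governed by $E_{d(v)-d(v_0)}$ and the parameterization $\nu$) so that both sides have matching divisors before constants are compared.

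As an alternative, more elementary way to pin the constants that sidesteps the full strength of Fay's identity, I would evaluate at the node. The fiber of the cokernel over $(1,1)$ is the one-dimensional space of harmonic functions on $G$; since the column sums of $\Delta(1,1)$ vanish (conductances are symmetric and the monodromies are trivial there), the left kernel is spanned by the all-ones covector, and pairing against it sends every basis section $\delta_v$ to the \emph{same} element of the cokernel fiber. Thus $\text{image}(\delta_v)$ takes a $v$-independent value at the preimages of the node. Verifying that the normalizing denominator $\eta(d_P(v)-e)$ makes $\psi_v$ take a $v$-independent value there as well would immediately give $\lambda_v=\lambda_{v_0}$ and finish the proof.
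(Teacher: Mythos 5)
Your opening reduction coincides with the paper's. By the corollary preceding the lemma, the image of $\delta_v$ is a rational section of $\text{Coker }\phi^*i^*\Delta\cong\mathcal{O}(S)$ with divisor $S_v+d(v)-d(v_0)$, and $\psi_v$ has the same divisor, so the two are proportional; the paper reaches the same point via Riemann--Roch (each component of the cokernel map is a global section of $\mathcal{O}(S_v)$, a generically one-dimensional space). In both treatments the lemma thus reduces to showing that the constants $\lambda_v$ all agree.

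Your primary route for doing this is circular. The relation you single out as the crux, $\sum_v\Delta_{vj}(z,w)\,\psi_v=0$ with $\Delta$ the Laplacian of the \emph{given} network $(G,c)$, is exactly the transpose of the statement being proved, and it does not follow from Fay's identity. Specializing Theorem \ref{fqi} as the paper does ($z=x$, $t=d_P(u)-e$, $x_k=\alpha_k$) produces a linear relation among $\psi_u$ and the $i_{v_k,u}^{-1}\psi_{v_k}$ whose coefficients are the theta-function conductances of (\ref{invmap}) built from $(C,S,\nu)$ --- not the given conductances $c$. Transferring the relation to $\Delta(c)$ requires knowing that $c$ agrees (up to scale) with those theta-function conductances, which is precisely part 2 of the inverse spectral theorem; and in the paper that statement is proved \emph{using} Lemma \ref{lemcoker} together with the uniqueness of the quadrisecant coefficients. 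So route (a) presupposes the theorem this lemma is meant to feed into. (It could be repaired, but only by proving the two statements simultaneously: exactness gives that $(\lambda_v\psi_v)_v$ spans the left kernel of $\Delta(c)$, Fay gives that $(\psi_v)_v$ spans that of the theta-function Laplacian, and comparing the two relations vertex by vertex shows $\lambda$ is harmonic for $c$, hence constant --- none of which is in your sketch.)

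Your ``alternative'' route (b) is, in fact, the paper's actual proof: the cokernel fiber over the node is the one-dimensional space of harmonic functions, pairing with the all-ones covector sends every $\delta_v$ to the same element there, and the paper fixes the scaling by exactly the requirement that the cokernel map take the value $(1,1,\dots,1)$ at $q_1,q_2$, the denominators $\eta(d_P(v)-e)$ being the matching normalization of the $\psi_v$. The verification you leave open --- that the $\psi_v$ take a $v$-independent value over the node --- is asserted at the same level of detail in the paper. So you should promote (b) to the main argument and discard (a).
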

\begin{proof}
If $D$ is a generic degree $g$ effective divisor, the Riemann-Roch theorem tells us that $H^0(\hat{C},\mathcal O(D))$ is 1-dimensional. The cokernel map in (\ref{es3}) is given by  a collection of global sections of $\text{Hom}(\mathcal O(d(v)-d(v_0)),\mathcal O(S)) \cong \mathcal O(S+d(v_0)-d(v))\cong \mathcal O(S_v)$, and therefore uniquely determined up to scaling each component once we specify the image of $\delta_v$ for all $v$. The scaling is fixed by the requirement that the cokernel at $q_0$ and $q_1$ should be $(1,1,...,1)$. 
\end{proof}

\section{Inverse spectral map}
We now describe the normalization map $\pi$ explicitly.
\begin{lemma}
The following diagram commutes:
\begin{center}
\begin{tikzcd}
\hat{C}
\arrow[dashed,bend left]{drr}{x \mapsto (E_{(1,0)}(x),E_{(0,1)}(x))}
\arrow[bend right,swap]{ddr}{\pi}
\arrow[dashed]{dr}{} & & \\
& C_0 \arrow[hookrightarrow]{r}{} \arrow[hookrightarrow]{d}[swap]{}
& (\mathbb{C}^*)^2 \arrow[hookrightarrow]{d}{} \\
& C \arrow[hookrightarrow,swap]{r}{}
& \mathcal N
\end{tikzcd}
\end{center}
\end{lemma}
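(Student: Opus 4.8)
The plan is to identify the two rational functions $E_{(1,0)}$ and $E_{(0,1)}$ appearing in the bent arrow with the pullbacks under $\phi$ of the torus coordinates $z$ and $w$. Once this is done the upper part of the diagram commutes tautologically, and the lower part is exactly the commuting square relating the normalizations $\phi$, $\pi$ to the inclusions $C_0 \hra C$ and $(\C^*)^2 \hra \mathcal N$ established earlier.

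First I would record divisors. Pulled back to $\hat{C}$ via $\phi$, the coordinate $z$ is a rational function whose zeros and poles lie only at the points at infinity, and by the identity $\text{div}_{\hat{C}} z^i w^j = \nu(h)$ for $h=(i,j)$ we have $\text{div}_{\hat{C}} z = \nu((1,0))$ and $\text{div}_{\hat{C}} w = \nu((0,1))$. On the other hand, for an integral homology class $h$ the divisor class $\nu(h)$ is principal, so $E_h$ is a genuine single-valued rational function on $\hat{C}$ with $\text{div}_{\hat{C}} E_h = \nu(h)$; this is the divisor normalization of the $E$-functions underlying Theorem \ref{prt}. In particular $E_{(1,0)}$ and $z$, and likewise $E_{(0,1)}$ and $w$, have identical divisors on $\hat{C}$.

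Next I would compare them. Since $E_{(1,0)}/z$ is holomorphic and nowhere vanishing on the compact curve $\hat{C}$, it is a nonzero constant, and similarly for $E_{(0,1)}/w$. To pin these constants to $1$ I would evaluate at one of the points $q_1, q_2$ lying over the node: because the node sits at $(1,1)$ we have $z(q_i)=w(q_i)=1$, while the normalization convention for the $E$-functions (the same convention that fixes the scaling in Lemma \ref{lemcoker}) gives $E_{(1,0)}(q_i)=E_{(0,1)}(q_i)=1$. Hence $E_{(1,0)} = z\circ\phi$ and $E_{(0,1)} = w\circ\phi$ as rational functions on $\hat{C}$. Consequently the bent dashed arrow $x \mapsto (E_{(1,0)}(x),E_{(0,1)}(x))$ equals $x \mapsto (z(\phi(x)),w(\phi(x)))$, which is $\phi$ followed by $C_0 \hra (\C^*)^2$, giving the upper triangle. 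The remaining dashed arrow $\hat{C} \dashrightarrow C_0$ is $\phi$ itself, and the relation $\pi = (C_0 \hra C)\circ \phi$ is precisely the content of the earlier normalization square, so the lower triangle commutes as well.

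I expect the only genuine obstacle to be the normalization step. Everything else reduces to the principle that two rational functions with the same divisor on a compact Riemann surface are proportional, together with the universal property of normalization; but to conclude that the proportionality constants are exactly $1$ rather than some spurious scalar, one must check that the precise definition of $E_h$ — in its interplay with the theta factor $\eta$ and the marked points $q_1, q_2$ over the node — really does normalize these functions to take the value $1$ there. That bookkeeping is where care is needed.
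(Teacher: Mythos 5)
Your proposal is correct and follows essentially the same route as the paper: compare divisors to conclude $\pi^*z$ and $E_{(1,0)}$ (resp.\ $\pi^*w$ and $E_{(0,1)}$) agree up to a multiplicative constant, then evaluate at the point $q_1$ over the node $(1,1)$, where both equal $1$, to fix the constant. Your extra remarks on the normalization of the $E$-functions at $q_1$ (which holds because $\nu(h)$ is anti-invariant under $\sigma$ and $q_1$ is a fixed point, so the symmetry $E(x,y)=E(x',y')$ forces $E_h(q_1)=1$) only make explicit what the paper asserts without comment.
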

\begin{proof}
The functions $z$ and $w$ on $(\mathbb{C}^*)^2$ restrict to rational functions on $C$, which pull back to rational functions $\pi^*z$ and $\pi^*w$ on $\hat{C}$. We have 
$$
\text{div}_{\hat{C}}\pi^*z=\text{div}_{\hat{C}}E_{(1,0)}(x),
$$
so they agree up to multiplication by a constant. Since $E_{(1,0)}(q_1)=\pi^*z(q_1)=1$, the constant is $1$, and therefore we have $ \pi^*z=E_{(1,0)}(x)$. By the same argument applied to $w$, we get $ \pi^*w=E_{(0,1)}(x)$.
\end{proof}
\begin{figure}\label{condfig}
\centering
\includegraphics[width=0.3\textwidth]{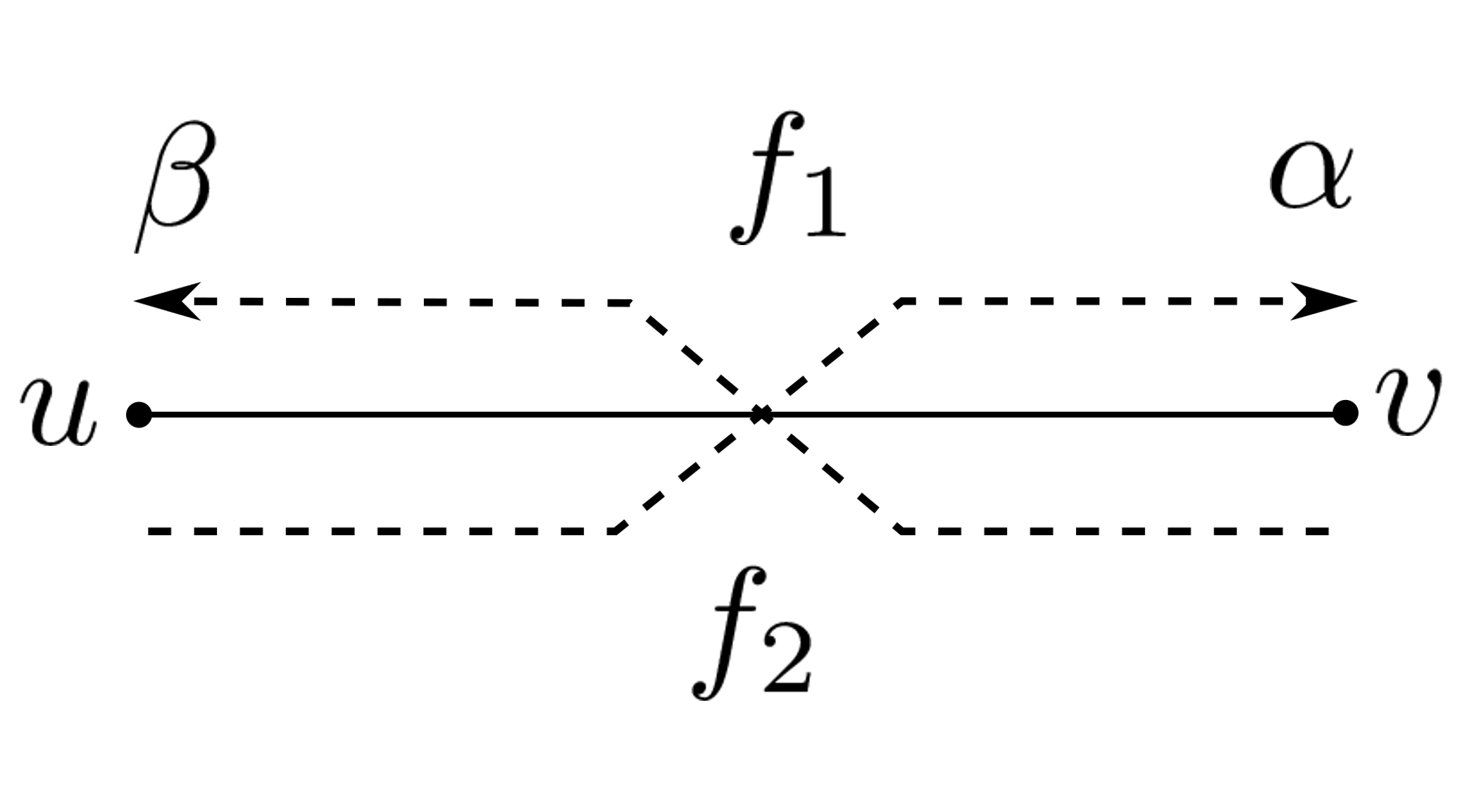}
\caption{Vertices, faces and zig-zag paths in the definition of the conductance function.}\label{condfig}
\end{figure}
Let $uv$ be an edge in $\tilde{G}$, $f_1$ and $f_2$ be the faces adjacent to $uv$ and let $\alpha,\beta$ be the zig-zag paths as shown in Figure \ref{condfig}. Define the conductance function
\begin{equation}\label{invmap}
c_{u,v}:=\frac{\eta(d_P(u)-e)\eta(d_P(v)-e
)}{\eta(d_P(f_1)-e)\eta(d_P(f_2)-e)}\frac{E(\alpha,\beta)}{E(\alpha,\beta')}.
\end{equation}
\begin{lemma}
$c_{u,v}$ has the following properties:
\begin{enumerate}
\item $c_{u,v}=c_{v,u};$
\item $c_{u,v}$ is compatible with taking the dual graph, that is, $c_{f_1,f_2}=1/c_{u,v}$. 
\item $c_{u,v}$ is $H_1(\T,\Z)$-periodic and therefore descends to a conductance function $c$ on $G$.

\end{enumerate}
\end{lemma}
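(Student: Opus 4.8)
The plan is to verify the three identities by tracking how the three building blocks of (\ref{invmap})---the Prym theta function $\eta$, the prime form $E$, and the discrete Abel--Prym map $d_P$---transform under the three relevant operations: reversing the edge $uv$, passing to the dual edge $f_1f_2$, and applying a deck transformation $h \in H_1(\T,\Z)$. The tools I will use repeatedly are the antisymmetry $E(p,q)=-E(q,p)$ and the $\sigma$-equivariance of the prime form (since $\sigma$ is an automorphism of $\hat C$, $E(\sigma p,\sigma q)$ differs from $E(p,q)$ only by the half-differential factors coming from $\sigma'$), the quasi-periodicity of $\eta$ on $\mathrm{Pr}(\hat C,\sigma)$, and the equivariance $d'(h\cdot u)=h\cdot d'(u)$ established above, which yields $d_P(h\cdot u)=d_P(u)+\lambda(h)$ for the fixed Prym translation $\lambda(h)=\tfrac12 I_P(\nu(h))$.

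\textbf{Property (1).} The theta quotient in (\ref{invmap}) is manifestly invariant under $u\leftrightarrow v$ (its numerator is symmetric in $u,v$) and under $f_1\leftrightarrow f_2$ (its denominator is symmetric), so it suffices to show the prime-form factor $E(\alpha,\beta)/E(\alpha,\beta')$ is unchanged. Reversing the orientation of the edge relabels the zig-zag paths $\alpha,\beta,\beta'$ according to Figure \ref{condfig}; I read off this relabeling and then check, using the antisymmetry of $E$ (and, where orientations are reversed, its $\sigma$-equivariance), that the resulting sign and half-differential changes appear in both numerator and denominator and therefore cancel in the ratio.

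\textbf{Property (2).} The dual edge of $uv$ is $e^\vee=\langle f_1,f_2\rangle$, whose endpoints in $G^\vee$ are the faces $f_1,f_2$ and whose adjacent faces are the vertices $u,v$. Writing $c_{f_1,f_2}$ from (\ref{invmap}) for the dual configuration, the theta quotient is literally the reciprocal of the one appearing in $c_{u,v}$. For the prime-form factor I use that the natural bijection between zig-zag paths of $G$ and of $G^\vee$ reverses orientation, so the three zig-zag paths attached to $e^\vee$ are $\sigma\alpha,\sigma\beta,\sigma\beta'$ up to the figure's labeling; the $\sigma$-equivariance of $E$ then shows the dual prime-form factor equals the reciprocal of $E(\alpha,\beta)/E(\alpha,\beta')$, the half-differential factors of $\sigma$ cancelling between the two prime forms in the ratio. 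Multiplying gives $c_{f_1,f_2}\,c_{u,v}=1$.

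\textbf{Property (3), and the main obstacle.} Under a deck transformation $h$, the zig-zag paths through the translated edge project to the same zig-zag paths of $G$, hence carry the same points at infinity on $\hat C$; consequently the prime-form factor is exactly invariant, and only the theta quotient must be shown invariant. All four arguments $d_P(u),d_P(v),d_P(f_1),d_P(f_2)$ shift by the common period $\lambda(h)$, so the $x$-independent (quadratic) part of the quasi-periodicity factor of $\eta$ occurs twice upstairs and twice downstairs and cancels, as does the $-e$ contribution, leaving the factor $\exp\!\big(2\pi i\,\langle m(h),\,d_P(u)+d_P(v)-d_P(f_1)-d_P(f_2)\rangle\big)$. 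I then compute this combination from the definition of $d'$: taking the paths to $u,v,f_1,f_2$ through the white vertex $w_e$ of $\Gamma_G$ dual to the edge, the base point drops out and the combination reduces to the intersection of the local $1$-chain $h_u+h_v-h_{f_1}-h_{f_2}$ at $w_e$ with the zig-zag classes, giving a fixed integral combination of the points at infinity $\alpha,\beta,\beta'$, which pairs integrally with the period lattice; hence the exponential is $1$ and $c_{u,v}$ is $H_1(\T,\Z)$-periodic. I expect this last property to be the crux: (1) and (2) are essentially prime-form bookkeeping once the labeling in Figure \ref{condfig} is fixed, whereas (3) requires the honest evaluation of $d_P(u)+d_P(v)-d_P(f_1)-d_P(f_2)$ and the verification that the residual automorphy factor of the theta quotient is trivial.
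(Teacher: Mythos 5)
Your treatment of parts (1) and (2) is essentially the paper's own: both reduce to bookkeeping with the prime form, using its antisymmetry and the symmetry $E(x,y)=E(x',y')$, together with the evident symmetry (resp.\ inversion) of the theta quotient under $u\leftrightarrow v$ (resp.\ vertices $\leftrightarrow$ faces). The problem is part (3), which you correctly identify as the crux, but where your argument has a genuine gap. You write $d_P(h\cdot u)=d_P(u)+\lambda(h)$ with $\lambda(h)=\tfrac12 I_P(\nu(h))$ and then invoke the quasi-periodicity of $\eta$. But quasi-periodicity of a theta function applies only to translations by vectors of the period lattice $\Z^g+\Pi\Z^g$; a priori $\lambda(h)$ is just a point of $\text{Pr}(\hat{C},\sigma)$, and under translation by a general point $\eta$ transforms in no controlled way. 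Nothing in your argument establishes that $\lambda(h)$ lies in the lattice, so the computation of automorphy factors never gets off the ground. Worse, the last step is false as stated: writing $\lambda(h)=a+\Pi n$ with $a,n\in\Z^g$, the residual factor is $\exp\bigl(-2\pi i\, n^T(d_P(u)+d_P(v)-d_P(f_1)-d_P(f_2))\bigr)$, and by the definition of $d'$ the combination $d_P(u)+d_P(v)-d_P(f_1)-d_P(f_2)$ is an integral combination of Abel--Prym images $I_P(\alpha), I_P(\beta)$ of the points at infinity through the edge (using $I_P(x')=-I_P(x)$). These are generic points of the Prym variety, varying continuously with the curve; they do not ``pair integrally with the period lattice,'' so the exponential is not $1$ unless $n=0$ --- which is precisely what has not been shown.

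The missing idea, which is the paper's entire proof of (3), is Abel's theorem. For $h=(i,j)\in H_1(\T,\Z)$, the divisor $\nu(h)$ is the divisor of the rational function $z^iw^j$ restricted to $\hat{C}$ (this is stated when the discrete Abel map is defined). Hence $I(\nu(h))=0$ in the Jacobian, and therefore
\begin{equation*}
\lambda(h)=\tfrac12\,\pi_1 I(\nu(h))=0 \quad\text{in } \text{Pr}(\hat{C},\sigma).
\end{equation*}
So the translation is not merely a lattice vector: it vanishes, all four theta arguments in (\ref{invmap}) are literally unchanged, and no automorphy factor ever appears; the invariance of the prime-form factor holds for the reason you give (the zig-zag paths project to themselves). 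Once you insert this one observation, your entire quasi-periodicity computation becomes unnecessary.
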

\begin{proof}
\begin{enumerate}
\item Follows from the symmetry $E(\alpha,\beta)=E(\alpha',\beta')$.
\item Clear.
\item Let $h \in H_1(\T,\Z)$. We have 
\begin{align*}
I_P(d_P(u+h)-d_P(u))&=\frac{1}{2}\pi_1 I(h)=0,
\end{align*}
since $h=(i,j)=\text{div}_{\hat{C}}z^iw^j$. 
\end{enumerate}
\end{proof}

\begin{theorem}

The rational map $\rho_{G,v_0}:(C,S,\nu)\mapsto V(c)$ is the inverse of $\kappa_{G,v_0}$. Therefore $\mathcal{R}^0_N $ is birational to $\mathcal S_N'$.

\end{theorem}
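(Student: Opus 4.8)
The plan is to exhibit the reconstruction map $\rho_{G,v_0}\colon(C,S,\nu)\mapsto V(c)$, with $c$ given by the conductance formula (\ref{invmap}), as a two-sided birational inverse of the spectral map $\kappa_{G,v_0}$. Both are rational maps of irreducible varieties, so it suffices to check each composition is the identity on a dense open set. The whole construction is arranged so that one analytic identity governs everything: with $c$ as in (\ref{invmap}) and the monodromies of $i$ read off from $\nu$, the theta-function sections $\psi_v$ of Lemma \ref{lemcoker} must assemble into a global section of $\mathrm{Coker}\,\phi^*i^*\Delta(c,i)$ in (\ref{es3}), i.e.\ they must satisfy
\begin{equation}\label{keyrel}
\sum_{v}\Delta(c,i)_{vu}\,\psi_v=0\quad\text{on }\hat C,\ \text{for every vertex }u.
\end{equation}
Everything else is bookkeeping around (\ref{keyrel}), which is the crux.

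First I would verify $\kappa_{G,v_0}\circ\rho_{G,v_0}=\mathrm{id}$. Fix a generic $(C,S,\nu)\in\mathcal S_N'$, build $c$ from (\ref{invmap}), and form $\Delta=\Delta(c,i)$. Granting (\ref{keyrel}), the vector $(\psi_v)_v$ lies in the kernel of the transposed Laplacian, hence witnesses a nonzero cokernel and forces $\det\Delta$ to vanish along $C_0$, and so along its closure $C$. Since the Newton polygon of $\det\Delta$ is contained in $N$ while $C\in W\subseteq|\mathcal L|$ has Newton polygon exactly $N$ and is irreducible for generic data, a degree comparison gives $\det\Delta=\lambda P$ for a scalar $\lambda$, so the spectral curve of $V(c)$ is $C$. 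By Lemma \ref{lemcoker} together with the Riemann--Roch uniqueness of the degree-$g$ cokernel section, the cokernel map of this $\Delta$ is precisely $\delta_v\mapsto\psi_v$; its $\delta_{v_0}$-component $\psi_{v_0}$ has divisor $S$, so the spectral divisor is $S$. Finally $\nu$ is recovered because the monodromy of $i$ around a zig-zag path is, by the construction of $d$ and $\nu$, its $\nu$-coordinate. The divisor constraint (\ref{divcondition}) is exactly what makes the relevant line bundles on the Prym have sections, so $V(c)$ lands in $\mathcal R_N^0$.

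The proof of (\ref{keyrel}) is where the main difficulty lies. Writing out row $u$ of $\Delta$ gives $\sum_{v'\sim u}c_{u,v'}\big(\psi_u-i_{v'u}\psi_{v'}\big)=0$; substituting (\ref{invmap}) for each $c_{u,v'}$, the theta-quotient expression for each $\psi_{v'}$, and the factors $i_{v'u}=E_{(i,j)}$ along the edges, and then trivializing the line bundles of (\ref{es3}) near each point at infinity, the relation collapses---after cancelling the common factor $\eta(x+d_P(u)-e)$---to a bilinear identity among the prime forms $E(\alpha,\beta)$ of the zig-zag paths meeting at $u$ and theta values on the Prym variety $\mathrm{Pr}(\hat C,\sigma)$. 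This is the Prym-theoretic Fay-type identity that, under a $Y-\Delta$ move, manifests as Fay's quadrisecant identity (\cite{Fay89}), with the secant points identified with the points at infinity attached to the zig-zag paths through $u$ and its neighbors (cf.\ Lemmas \ref{crsfextremal}, \ref{crsfexternal}). The obstacle is the careful matching: one must (i) track the prefactors $E_{(i,j)}$ and the shifts $d(v)-d(v_0)$ so that degrees and quasi-periodicities agree on both sides, (ii) check that the antisymmetrization $\tfrac12 I_P$ places the identity on the Prym rather than the full Jacobian, and (iii) confirm that the coefficients forced by Fay's identity are exactly the conductances (\ref{invmap}), not merely proportional to them.

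For the reverse composition $\rho_{G,v_0}\circ\kappa_{G,v_0}=\mathrm{id}$, start from a network $(G,c)\in\mathcal R_N^0$ with spectral data $(C,S,\nu)$. By Lemma \ref{lemcoker} and Riemann--Roch uniqueness, the theta-function sections $\psi_v$ built from $(C,S,\nu)$ coincide (after the normalization fixing the value $(1,\dots,1)$ at $q_1,q_2$) with the genuine cofactor sections of $\Delta(c,i)$, so (\ref{keyrel}) holds for the original conductances. The reconstructed $c'$ of (\ref{invmap}) also satisfies (\ref{keyrel}) by the computation of the previous paragraph; since the relation at the two endpoints and two faces of an edge $\langle u,v\rangle$ determines that edge's conductance uniquely given the sections $\psi_\bullet$, up to the global scalar quotiented out in $\mathcal R_N^0$ (and using the symmetry and duality properties of (\ref{invmap})), we conclude $c'=c$. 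Combining the two compositions shows $\rho_{G,v_0}$ and $\kappa_{G,v_0}$ are mutually inverse rational maps; hence $\mathcal R_N^0$ is birational to $\mathcal S_N'$, which also completes the proof of Theorem \ref{thm1}.
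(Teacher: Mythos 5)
Your proposal has the same skeleton as the paper's proof: check the two compositions, identify as the crux the relation $\sum_{v}\Delta^{T}_{vu}\psi_{v}=0$ on $\hat C$, and settle the reverse composition via Lemma \ref{lemcoker} together with uniqueness of the coefficients in such a relation. But you never prove the crux: you write ``granting'' it, and then list its verification as an unresolved ``obstacle''. That is a genuine gap, because this identity \emph{is} the content of the direction $\kappa_{G,v_0}\circ\rho_{G,v_0}=\mathrm{id}$; the surrounding bookkeeping (Newton-polygon comparison to identify the curve, recovery of $S$ and $\nu$) is routine. The paper closes exactly this gap by two explicit applications of Theorem \ref{fqi}: first with $z=q_1$, $t=d_P(u)-e$, $x_k=\alpha_k$ (the points at infinity of the zig-zag paths through $u$), which yields the closed formula (\ref{condsum}) for $\sum_{k}c_{u,v_k}$; then with the same $t$ and $x_k$ but general $z=x$, which combined with (\ref{condsum}) gives precisely $\sum_{v_k\sim u}c_{u,v_k}\bigl(\psi_u(x)-i_{v_k,u}^{-1}\psi_{v_k}(x)\bigr)=0$. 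Note also that the identity is invoked at every vertex $u$ of arbitrary degree $n$, not only ``under a $Y-\Delta$ move'' as you suggest, and your picture of cancelling a common factor $\eta(x+d_P(u)-e)$ cannot be right: each term in the sum carries a \emph{different} theta factor $\eta(x+d_P(\cdot)-e)$, and it is exactly Fay's identity, not a factorization, that makes the sum vanish.

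Your three listed obstacles in fact dissolve once the substitution above is made, which is further evidence that the step you deferred is the whole proof. Obstacle (ii) is moot because Theorem \ref{fqi} is already stated for the Prym theta function $\eta$ and the Abel--Prym map $I_P$, so no descent from the Jacobian to the Prym needs to be checked. Obstacle (iii) is irrelevant for the forward composition, since the kernel relation is homogeneous in the coefficients (rescaling them does not change whether $(\psi_v)_v$ lies in $\mathrm{Ker}\,\phi^*i^*\Delta^T$, and the spectral divisor is read off from $\mathrm{div}\,\psi_{v_0}=S$ regardless); proportionality only matters in the reverse composition, where, as you correctly note, conductances in $\mathcal R^0_N$ are defined modulo a global scalar, so agreement up to a constant suffices. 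What remains of obstacle (i) is exactly the parameter choice $z=q_1$ versus $z=x$ recorded above. So your plan points at the right tool, but as written it defers the one computation that constitutes the proof.
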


\begin{proof}

\begin{figure}\label{condfig2}
\centering
\includegraphics[width=0.5\textwidth]{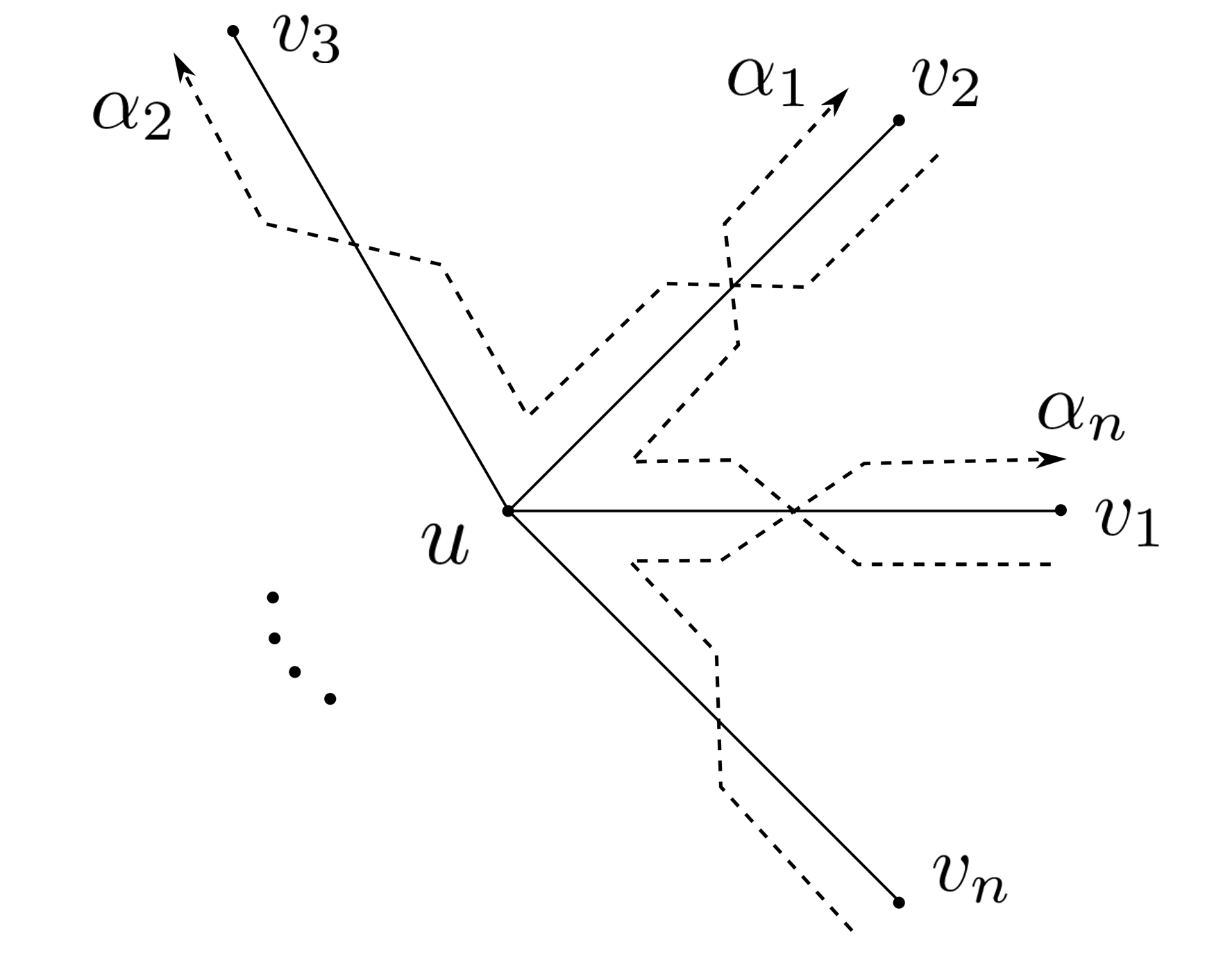}
\caption{Local configuration near a vertex $u$.}\label{condfig2}
\end{figure}
\begin{enumerate}
\item $\kappa_{G,v_0} \circ  \rho_{G,v_0}=\text{id}:$\\Let $u$ be a vertex in $\Gamma$ and let $v_1,...,v_n$ be the vertices adjacent to $u$ in $G$. Let $\alpha_1,...,\alpha_n$ be the zig-zag paths as shown in Figure \ref{condfig2}. Note that
$$
i_{v,u}^{-1}\psi_v(x)=\psi_u(x).
$$

Using Theorem \ref{fqi} with $z=q_1,t=d_P(u)-e,x_k=\alpha_k$, we get 
\begin{equation}\label{condsum}
\sum_{v_k \sim u}c_{u,v_k}=\frac{\eta\left(d_P(u)-e-\sum_{i=1}^k \alpha_k\right)\eta(d_P(u)-e)^2}{\prod_{k=1}^n\eta(d_P(u)-e-\alpha_k)}\prod_{k=1}^n \frac{E(\alpha_k,\alpha_{k+1})}{E(\alpha_k,\alpha'_{k+1})}. 
\end{equation}
Using Theorem \ref{fqi} with $z=x,t=d_P(u)-e,x_k=\alpha_k$ and (\ref{condsum}), we get
$$
\sum_{v_k \sim u}c_{u,v_k} (\psi_{u}(x)-i_{v_k,u}^{-1}\psi_{v_k}(x))=0,
$$
so the following is sequence is exact:

\begin{equation*}
0 \rightarrow \text{Ker }\phi^*i^* \Delta^T \xrightarrow[]{1 \mapsto (\psi_v)_{v}}\bigoplus_{v \in V}\mathcal{O}_{\hat C}(-d(v)+d(v_0)) \xrightarrow[]{\phi^*i^*\Delta^T} \bigoplus_{v \in V} \mathcal{O}_{\hat C}(-d(v)+\sum_{\alpha_i \in \mathcal Z:v \in \alpha}\alpha_i +d(v_0)).
\end{equation*}
Since this is the transpose of (\ref{es3}), the cokernel map in (\ref{es3}) is $\delta_v \mapsto \psi_v$ and we recover $S=\text{div}_{\hat{C_0}}\psi_{v_0}$ as the divisor.
\item $\rho_{G,v_0} \circ \kappa_{G,v_0}=\text{id}$:\\
Suppose $c'$ is a conductance function such that $\kappa_{G,v_0}(c')=(C,S,\nu)$. By Lemma \ref{lemcoker}, the cokernel map is determined by $S$ and is given by $\delta_v \mapsto \psi_v$. Taking transpose, the equation of $\phi^* i^* \Delta^T$ becomes 
$$
\sum_{v_k \sim u}c'_{u,v_k} (\psi_{u}(x)-i_{v_k,u}^{-1}\psi_{v_k}(x))=0.
$$
Since the coefficients of the quadrisecant identity are uniquely determined up to a constant, comparing with Theorem \ref{fqi} with $z=x,t=d_P(u)-e,x_k=\alpha_k$, we see that $c'$ agrees with $c$ up to a multiplicative constant.
\end{enumerate}
 \end{proof}

\section{Compatibility with $Y-\Delta$ transformations}

\begin{figure}\label{et}
\centering

\includegraphics[width=0.7\textwidth]{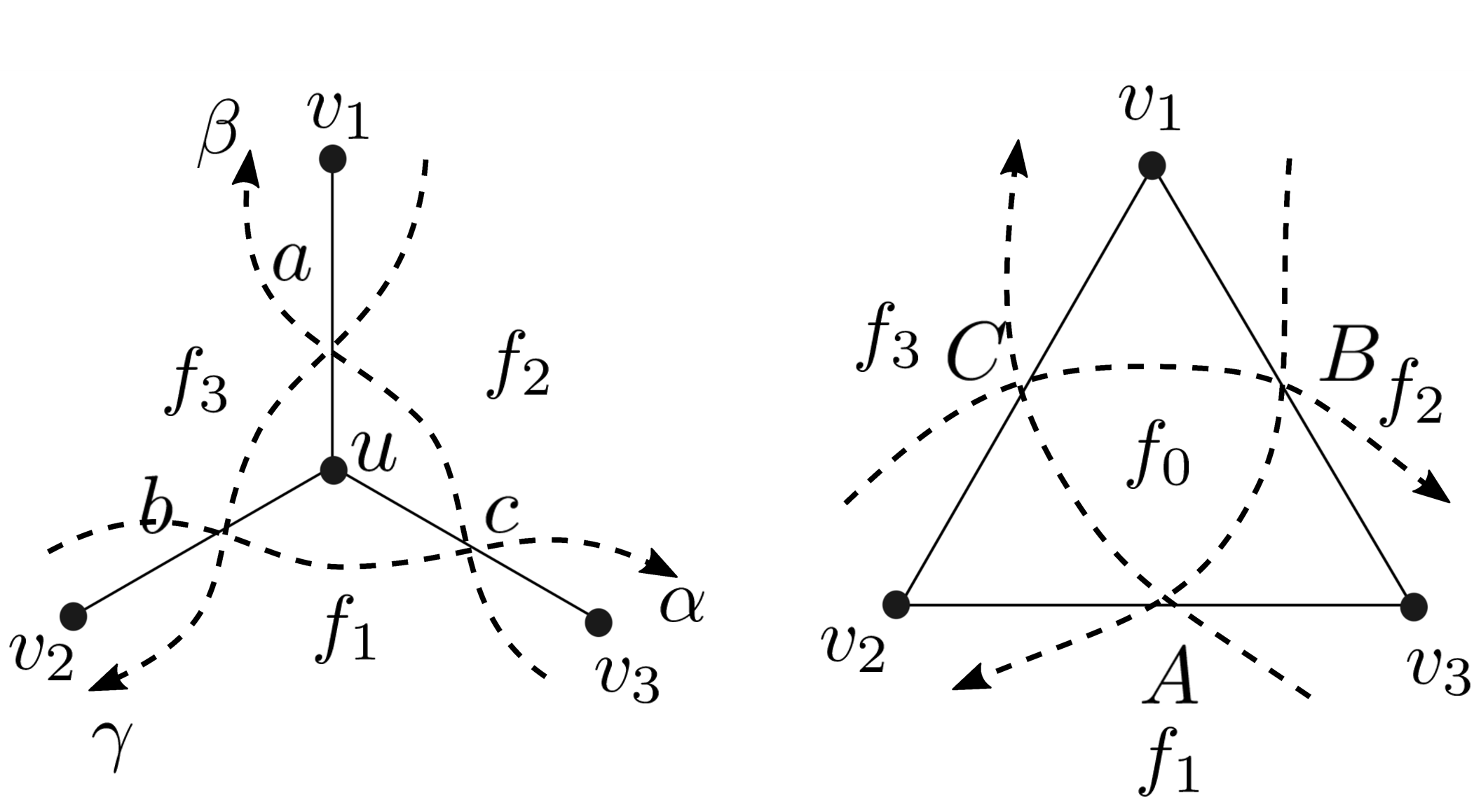}

\caption{Y-Delta transformation.}\label{et}
\end{figure}

A $Y-\Delta$ transformation is induced by sliding a zig-zag path through the crossing of two other zig-zag paths as shown in Figure \ref{et}. Therefore discrete Abel and discrete Abel-Prym maps $d,d_P$ on $G_1$ induce discrete Abel and discrete Abel-Prym maps on $G_2$, which we will also denote by $d,d_P$ respectively.

\begin{theorem}\label{ydcomp}

Let $G_1 \ra G_2$ be a $Y-\Delta$ transformation and let $v_1$ and $v_2$ be vertices of $G_1$ and $G_2$ respectively. The following diagram commutes:\\
\begin{center}
\begin{tikzcd}
& \mathcal{R}^0_{N}\arrow{dr}{\kappa_{G_2,v_2}} \arrow[dl,"\kappa
_{G_1,v_1}"'] & \\
\mathcal{S}'_N \arrow[rr,"s"] & & \mathcal{S}'_N
\end{tikzcd}
\end{center}
The birational map $s$ is defined as $(C,S_1,\nu_1) \mapsto (C,S_2,\nu_2)$, where
\begin{enumerate}
\item There is a natural bijection between zig-zag paths on $G_2$ and $G_1$ induced by $Y-\Delta$ transformation. $\nu_2$ is obtained by composing this bijection with $\nu_1$.
\item $S_2$ is the generically unique degree $g$ effective divisor satisfying $S_2\equiv S_1 + d(v_1)-d(v_2).$ 
\end{enumerate}
\end{theorem}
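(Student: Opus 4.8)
The plan is to establish the commutativity of the diagram by showing that the two compositions $s \circ \kappa_{G_1,v_1}$ and $\kappa_{G_2,v_2}$ agree as maps from $\mathcal{R}^0_N$ to $\mathcal{S}'_N$. Since both $\kappa_{G_i,v_i}$ are the inverse spectral maps (realized concretely by the conductance formula (\ref{invmap})), and $s$ is defined explicitly on spectral data, the strategy is to compute the effect of a single $Y-\Delta$ move directly on the three pieces of spectral data $(C,S,\nu)$ and verify it matches $s$. First I would observe that a $Y-\Delta$ move leaves the spectral curve $C$ unchanged: this should follow from the fact (asserted in the introduction and by Goncharov--Kenyon) that the characteristic polynomial $P(z,w)=\det\Delta$ is invariant under the move, since both networks lie in $\mathcal{R}^0_N$ with the same Newton polygon and the $Y-\Delta$ map is an isomorphism of the cluster variety preserving $\det\Delta$. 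The parameterization $\nu$ part is essentially bookkeeping: the move slides one zig-zag path across a crossing of two others (Figure \ref{et}), inducing the natural bijection on $\mathcal{Z}_{G_1} \to \mathcal{Z}_{G_2}$ that preserves homology classes and hence points at infinity, giving $\nu_2 = (\text{bijection}) \circ \nu_1$ as required by item (1).

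The substantive content is the divisor statement (2), namely $S_2 \equiv S_1 + d(v_1) - d(v_2)$. Here I would use the Corollary following the last Lemma of Section 5, which gives $\operatorname{div}_{\hat C}\delta_v = S_v + d(v) - d(v_0) \equiv S$ for every vertex $v$, exhibiting all the $S_v$ as linearly equivalent divisors differing by the discrete Abel map values $d(v) - d(v_0)$. The key point is that the divisor $S$ produced by $\kappa_{G,v_0}$ depends on the choice of base vertex $v_0$: it is the divisor of the cokernel section $\delta_{v_0}$. So $S_1 = \operatorname{div}\psi_{v_1}$ on $G_1$ and $S_2 = \operatorname{div}\psi_{v_2}$ on $G_2$, and the relation between them is governed precisely by the discrete Abel map. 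Concretely, I would argue that the cokernel line bundle $\operatorname{Coker}\phi^*i^*\Delta = \mathcal{O}(S)$ is intrinsic to the spectral curve and unchanged by the $Y-\Delta$ move (since $\Delta$ changes only by the local conductance substitution that preserves $\det\Delta$ and hence the isomorphism class of the cokernel), so the two divisors $S_1, S_2$ represent the same line bundle up to the shift by $d(v_1) - d(v_2)$ coming from changing base vertices. This is exactly the linear equivalence in item (2), and the genericity clause guarantees the effective representative is unique by Riemann--Roch as in Lemma \ref{lemcoker}.

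The main obstacle will be verifying that the discrete Abel map $d$ transforms correctly under the $Y-\Delta$ move, i.e.\ that the values $d(v_1)$ on $G_1$ and $d(v_2)$ on $G_2$ are compatible under the identification of the two spectral curves. Since $d = \nu \circ d'$ and $d'$ is defined via the intersection pairing with zig-zag homology classes, I would need to check that sliding a zig-zag path (which is what the move does) changes the combinatorial data $d'$ in a way that, after applying $\nu$ into $\operatorname{Cl}(\hat C)$, produces exactly the class $d(v_1) - d(v_2)$. The remark preceding the theorem already grants that $d, d_P$ on $G_1$ induce the corresponding maps on $G_2$ through the zig-zag bijection, so the content is to track the shift in base-vertex contribution precisely. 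I would do this by comparing the exact sequence (\ref{es3}) and its analogue on $G_2$: both have the same cokernel $\mathcal{O}(S)$ but the grading of the source and target bundles shifts according to $d(v) - d(v_0)$, and matching the two gradings forces the relation $S_2 \equiv S_1 + d(v_1) - d(v_2)$. Once the curve-invariance, the $\nu$-compatibility, and this Abel-map shift are in hand, the diagram commutes and the proof is complete.
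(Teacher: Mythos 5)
Your outline gets the curve and $\nu$ components right, but the justification of the divisor step---the substantive content of the theorem---has a genuine gap. You assert that the cokernel line bundle is unchanged by the move ``since $\Delta$ changes only by the local conductance substitution that preserves $\det\Delta$ and hence the isomorphism class of the cokernel.'' This inference fails on two counts. First, $\Delta_{G_1}$ and $\Delta_{G_2}$ are not the same matrix with substituted entries: the $Y-\Delta$ move deletes the central vertex $u$ of the $Y$, so the two Laplacians act on spaces of different ranks and the sequence (\ref{es3}) and its $G_2$-analogue involve different bundles. Second, equality of determinants (which in fact holds only up to the constant factor $a+b+c$, since $\det\Delta_{G_1}=(a+b+c)\det\Delta_{G_2}$ by Schur complementation at $u$) implies nothing about cokernels; the divisor $S$ is precisely the information that $P=\det\Delta$ does not determine, which is the whole point of the spectral classification. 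What your route actually requires is the Schur-complement factorization itself: because the diagonal entry $a+b+c$ at $u$ is a nonvanishing constant on $C$, one can factor $\Delta_{G_1}$ through unipotent block matrices to obtain an isomorphism $\text{Coker}\,\Delta_{G_1}\cong\text{Coker}\,\Delta_{G_2}$ over $C_0$ that matches the image of $\delta_v$ on both sides for every vertex $v$ common to $G_1$ and $G_2$. Combined with the corollary $\text{div}_{\hat C}\,\delta_v=S_v+d(v)-d(v_0)\equiv S$ applied to each graph, and the compatibility of $d$ under the move (which you correctly flag as an obstacle but leave unverified), this would give $S_2\equiv S_1+d(v_1)-d(v_2)$. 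As written, however, the key claim is asserted rather than proved, and your later appeal to ``matching the two gradings'' of (\ref{es3}) presupposes the same unproved identification of cokernels.

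Note also that the paper takes a different and, in this respect, self-contained route: it proves the equivalent identity $\kappa_{G_1,v_1}^{-1}=\kappa_{G_2,v_2}^{-1}\circ s$ on the inverse maps. One first checks that the Prym parameter $e$ built from $(S_2,v_2)$ equals the one built from $(S_1,v_1)$ (the shift $d(v_1)-d(v_2)$ in the divisor class exactly cancels the change of base vertex), then evaluates the explicit formula (\ref{invmap}) to get $a,b,c$ and $A$, and verifies $A=\frac{bc}{a+b+c}$ via Fay's quadrisecant identity through (\ref{condsum}). There the theta-function computation does the work that your cokernel-invariance claim is meant to do; if you want to keep your forward-map strategy, the Schur-complement argument above is the missing ingredient you must supply.
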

\begin{proof}
The $Y-\Delta$ transformation preserves the spectral curve. The local picture is shown in Figure \ref{et}. Let $e=\frac{1}{2}\pi_1(I(S_1)-I(q_1)-I(q_2)-\pi^*\Delta_C)+d_P(v_1)$. We show that $\kappa
_{G_1,v_1}^{-1}= \kappa
_{G_2,v_2}^{-1} \circ s$. We have 
\begin{align*}
a&=\kappa_{G_1,v_1}^{-1}(C,S_1,\nu_1)_{u v_1}=\frac{\eta(d_P(u)-e)\eta(d_P(v_1)-e
)}{\eta(d_P(f_2)-e)\eta(d_P(f_3)-e)}\frac{E(\beta,\gamma)}{E(\beta,\gamma')};\\
b&=\kappa_{G_1,v_1}^{-1}(C,S_1,\nu_1)_{u v_2}=\frac{\eta(d_P(u)-e)\eta(d_P(v_2)-e
)}{\eta(d_P(f_1)-e)\eta(d_P(f_3)-e)}\frac{E(\gamma,\alpha)}{E(\gamma,\alpha')};\\
c&=\kappa_{G_1,v_1}^{-1}(C,S_1,\nu_1)_{u v_3}=\frac{\eta(d_P(u)-e)\eta(d_P(v_3)-e
)}{\eta(d_P(f_1)-e)\eta(d_P(f_2)-e)}\frac{E(\alpha,\beta)}{E(\alpha,\beta')}.
\end{align*}
Note that by the definition of $s$, 
\begin{align*}
&\frac{1}{2}\pi_1(I(S_2)-I(q_1)-I(q_2)-\pi^*\Delta_C)+d_P(v_2)\\
&=\frac{1}{2}\pi_1(I(S_1+d(v_1)-d(v_2))-I(q_1)-I(q_2)-\pi^*\Delta_C)+d_P(v_2)\\
&=e
\end{align*}
Therefore
$$
A=\kappa
_{G_2,v_2}^{-1} \circ s(C,S_2,\nu_2)_{v_2v_3}=\frac{\eta(d_P(v_2)-e)\eta(d_P(v_3)-e
)}{\eta(d_P(f_0)-e)\eta(d_P(f_1)-e)}\frac{E(\gamma,\alpha')}{E(\gamma,\alpha)}.
$$
Equation (\ref{condsum}) becomes $$
a+b+c=\frac{\eta(d_P(u)-e)^2\eta(d_P(f_0)-e
)}{\eta(d_P(f_1)-e)\eta(d_P(f_2)-e)\eta(d_P(f_3)-e)}\frac{E(\alpha,\beta)E(\beta,\gamma)E(\gamma,\alpha)}{E(\alpha,\beta')E(\beta,\gamma')E(\gamma,\alpha')}.
$$
Plugging in these expressions, we see that $\frac{bc}{a+b+c}=A$, which is the transition map between the $G_1$ and $G_2$ affine charts.
\end{proof}

\section{Discrete integrable systems from $Y-\Delta$ moves}

Let $T$ be a sequence of $Y-\Delta$ moves on a graph $G$ such that the resulting graph $T \cdot G$ is isomorphic to $G$ as graphs. Let $\phi_T:G \ra T \cdot G$ be the isomorphism. The composition 
\begin{align*}
\mathcal R^0_N \supset \mathcal R^0_G \ra \mathcal R^0_{T\cdot G} \xrightarrow[]{\simeq} \mathcal R^0_G \subset \mathcal R^0_N    
\end{align*}
defines a birational automorphism of $\mathcal R^0_N$, which we denote by $\mu_T$. It is a cluster modular transformation as defined in \cite{FG03b}. Using Theorem \ref{ydcomp}, we construct the follwing commuting diagram:
\begin{center}
\begin{tikzcd}
\mathcal R^0_N \supset \mathcal{R}^0_G  \arrow[r] \arrow[d,"\kappa_{G,v}"] \arrow[bend left=30]{rr}{\mu_T}
& \mathcal{R}^0_{T \cdot G} \arrow[d,"\kappa_{T \cdot G,\phi_T^{-1}(v)}"] \arrow[r,"\simeq"] & \mathcal{R}^0_{G}\subset \mathcal R^0_N  \arrow[d,"\kappa_{G,v}"]\\
 \mathcal{S}'_N \arrow[r,"s"]
& \mathcal{S}'_N \arrow[r,"t"] & \mathcal{S}'_N
\end{tikzcd},
\end{center}
where $s$ is the map in Theorem \ref{ydcomp} and $t$ is the natural map induced by the graph isomorphism $\phi_T$, that is  $(C,S,\nu) \mapsto (C,S,\nu')$, where $\nu'$ is obtained from $\nu$ by composing with $\phi_T$. We have shown:
\begin{theorem}\label{lin}
The following diagram commutes:
\begin{center}
\begin{tikzcd}
\mathcal{R}^0_N \arrow[r,"\mu_T"] \arrow[d,"\kappa_{G,v}"] 
& \mathcal{R}^0_N \arrow[d,"\kappa_{G,v}"] \\
\mathcal{S}'_N \arrow[r,"s_T"]
& \mathcal{S}'_N
\end{tikzcd},
\end{center}
where the birational map $s_T$ is defined as $(C,S,\nu) \mapsto (C,S_T,\nu_T)$ where $S_T$ is the (generically) unique degree $g$ effective divisor satisfying $S_T \equiv S +d(v)-d(\phi_T^{-1}(v))$ and $\nu_T=\nu \circ \phi_T^{-1}$. 
\end{theorem}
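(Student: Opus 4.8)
The plan is to factor the cluster modular transformation $\mu_T$ through the intermediate network $T\cdot G$ and to verify separately the two squares of the diagram preceding the statement: the left square, recording the sequence of $Y-\Delta$ moves $G=G^{(0)}\to G^{(1)}\to\cdots\to G^{(k)}=T\cdot G$, and the right square, recording the graph isomorphism $\phi_T:G\to T\cdot G$. Throughout, the spectral curve $C$ is fixed (every $Y-\Delta$ move and every isomorphism preserves it, as in the proof of Theorem \ref{ydcomp}), so the discrete Abel map $d$ is carried consistently across all the $G^{(i)}$ and takes values in a single $Cl(\hat C)$.

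For the left square, I would iterate Theorem \ref{ydcomp}. Choosing any base vertex $v^{(i)}$ of $G^{(i)}$ with $v^{(0)}=v$ and $v^{(k)}=\phi_T^{-1}(v)$, each elementary move $G^{(i)}\to G^{(i+1)}$ gives a commuting square intertwining $\kappa_{G^{(i)},v^{(i)}}$ and $\kappa_{G^{(i+1)},v^{(i+1)}}$ by the divisor-translation map $s_i$ with $S^{(i+1)}\equiv S^{(i)}+d(v^{(i)})-d(v^{(i+1)})$, together with the zig-zag relabeling induced by that move. Composing the squares, the intermediate base vertices telescope: $S^{(k)}\equiv S+\sum_{i}\big(d(v^{(i)})-d(v^{(i+1)})\big)=S+d(v)-d(\phi_T^{-1}(v))$, independent of the intermediate choices, so the left square commutes with bottom map $s$.

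For the right square, $\phi_T$ is an isomorphism of surface graphs, so it transports the entire spectral construction of $\mathcal R^0_{T\cdot G}$ (built with base vertex $\phi_T^{-1}(v)$) to that of $\mathcal R^0_G$ (built with base vertex $v$) verbatim, leaving $C$ and the divisor class unchanged and only relabeling the parameterization of points at infinity; this is the map $t$. Composing $s_T:=t\circ s$ then yields $(C,S,\nu)\mapsto(C,S_T,\nu_T)$ with $S_T\equiv S+d(v)-d(\phi_T^{-1}(v))$ and $\nu_T=\nu\circ\phi_T^{-1}$, and the outer rectangle is exactly the claimed commuting square.

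The main obstacle is not the formal pasting of squares but the bookkeeping that legitimizes it: one must check that the discrete Abel map is genuinely transported without ambiguity across each $Y-\Delta$ move (so that the telescoping of the divisor shifts is valid and truly independent of the intermediate base vertices), and one must track the zig-zag relabeling carefully enough to confirm that the net effect on $\nu$ after the whole sequence and the isomorphism is precisely $\nu\circ\phi_T^{-1}$, with no residual permutation hidden in the elementary moves. Both reduce to the fact that a $Y-\Delta$ move only reroutes zig-zag paths (Figure \ref{et}) without changing their set or homology classes; granting this, everything follows from Theorem \ref{ydcomp} and functoriality of the spectral data under graph isomorphisms.
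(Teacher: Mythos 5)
Your proposal is correct and follows essentially the same route as the paper: the paper also factors $\mu_T$ through $\mathcal{R}^0_{T\cdot G}$, obtains the left square by (iterated) application of Theorem \ref{ydcomp} with the divisor shifts composing to $d(v)-d(\phi_T^{-1}(v))$, and obtains the right square from the graph isomorphism $\phi_T$ acting only on the parameterization $\nu$, so that $s_T=t\circ s$. Your explicit telescoping over intermediate base vertices is merely a spelled-out version of what the paper leaves implicit in composing the squares.
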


For a fixed $(C)$, the fiber of the projection $(C,S,\nu) \mapsto (C)$ over $(C)$ is a cover of the space of degree $g$ effective divisors on $C$ satisfying (\ref{divcondition}), which is birational to a cover of $\text{Prym}(\hat{C},\sigma)$. Therefore Theorem \ref{lin} tells us that the discrete integrable system arising from $T$ is linearized on a cover of  $\text{Prym}(\hat{C},\sigma)$.

\section{A conjecture}
Let $G$ be a minimal resistor network, $\Gamma_G$ be the associated bipartite graph. Recall the dimer spectral data $\kappa_{\Gamma_G,v}:\mathcal{X}_N^0 \ra \mathcal{S}_N$ as defined in \cite{GK12} Proposition 7.2. By \cite{GK12} Theorem 1.4, \cite{Fock15}, $\kappa_{\Gamma_G,v}$ is a birational isomorphism. We conjecture that the map $t$ that makes the diagram below commute is $(C,S,\nu) \mapsto (C,S+(1,1),\nu)$.
\begin{center}
\begin{tikzcd}
\mathcal{R}^0_N \arrow[r,"\kappa_{G,v}"] \arrow[d] 
& \mathcal{S}_N' \arrow[d,"t"] \\
\mathcal{X}_N^0 \arrow[r,"\kappa_{\Gamma_G,v}"]
& \mathcal{S}_N
\end{tikzcd}
\end{center}

\section{Appendix}

For background on the material collected here, see \cite{Fay73}, \cite{Fay89}, \cite{Tata1},\cite{Tata2}, \cite{Taim97}. Let $\pi: \hat{C} \ra C$ be a ramified double covering of genus $\hat{g}$ of a smooth curve of genus $g$ with branch points $q_1,q_2$. By the Riemann-Hurwitz theorem, $\hat{g}=2g$. Let $\sigma: \hat{C} \ra \hat{C}$ be the involution permuting the branches of the covering with fixed points at $q_1,q_2$ and let $x'=\sigma(x)$ denote the conjugate point of $x \in \hat{C}$. 
We can choose a canonical homology basis for $H_1(\hat{C},\Z)$ 
$$
A_1,B_1,A_2,B_2,...,A_{2g},B_{2g},
$$
such that 
$(\pi_*(A_i),\pi_*(B_i))_{i=1}^{g}$ is a basis for $H_1(C,\Z)$ and such that 
$$
\sigma(A_k)+A_k=\sigma(B_k)+B_k=0, \quad 1 \leq k \leq g.
$$
If the dual basis of holomorphic differentials on $\hat{C}$ is 
$$
u_1,...,u_{2g},
$$
then for $1 \leq k \leq g$ we have 
$$
\sigma^* u_{k}+u_{g+k}=0.
$$
A holomorphic differential $\omega$ on $\hat{C}$ is called a Prym differential if $\sigma^*(\omega)+\omega=0$. For $1 \leq k \leq g$,
$$
\omega_k=\sigma^* u_k + u_k
$$
is a basis for Prym differentials on $\hat{C}$. Let $\Pi$ be the matrix of periods of the Prym differentials around the $b-$cycles of $\hat{C}$: 
$$
\Pi_{jk}=\int_{b_k} u_j.
$$
The \textit{Prym variety} $\text{Pr}(\hat{C},\sigma)$ is defined to be
$$
\frac{\mathbb{C}^g}{\Z^g + \Pi \Z^g}.
$$
Let $E(x,y)$ denote the prime form on $\hat{C}$. $E(x,y)$ has the symmetry $E(x,y)=E(x',y')$ for all $x,y \in \hat{C}$. Let $Cl(\hat{C})$ denote the divisor class group of $\hat{C}$. For a divisor $D=\sum_i a_i-\sum_j b_j \in Cl(\hat{C})$, define $$E_D(x):=
\frac{\prod_i E(x,a_i)}{\prod_j E(x,b_j)}.$$ It is a section of the line bundle associated to $D$ with divisor $D$.

Let $\hat{J},J$ be the Jacobians of $\hat{C},C$ respectively and let  $I: \hat{C} \ra \hat{J}$ be the Abel map with base-point $p_0\in \hat{C}$. By Riemann's theorem, we have $\hat{J} =I(\text{Symm}^{2g}\hat{C})$ and the involution $\sigma$ induces an involution $
\sigma_*:\hat{J} \ra \hat{J}$: Given $\zeta \in \hat{J}$, let $D \in \text{Symm}^{2g}(\hat{C})$ such that $I(D)=\zeta$ and let
$\sigma_*(\zeta)=I(\sigma(D))$. In coordinates, $\sigma_*$ is given by 
$$
(z_1,...,z_{2g})\mapsto (-z_{g+1},...,-z_{2g},-z_1,...,-z_g).
$$

The Prym variety is embedded by $\phi:\text{Pr}(\hat{C},\sigma) \hookrightarrow \hat{J}:$  $$
(z_1,...,z_g) \mapsto (z_1,...,z_g,z_1,...,z_g).
$$.
We also have projections $\pi_1:\hat{J}\ra  \text{Pr}(\hat{C},\sigma)$ and $\pi_2:\hat{J} \ra J$ given by 

\begin{align*}
\pi_1(z_1,...,z_{2g})&=(z_1+z_{g+1},...,z_g+z_{2g})\\
\pi_2(z_1,...,z_{2g})&=(z_1-z_{g+1},...,z_g-z_{2g}).
\end{align*}
Define the Abel-Prym map with base-point $q_1$: 
\begin{align*}
I_{P}:\hat{C} &\ra \text{Pr}(\hat{C},\sigma)\\
x &\mapsto \left( \int_{q_1}^{x} \omega_1,...,\int_{q_1}^{x} \omega_g \right), \text{ for } x \in \hat{C}.
\end{align*}
Note that $I_P=\pi_1 \circ I$.
Let $\eta(z)$ be the theta function on $\text{Pr}(\hat{C},\sigma)$. Note that for $e \in \text{Pr}(\hat{C},\sigma)$, we have 
$$
e=\frac{1}{2}\pi_1(\phi(e)).
$$

\begin{theorem}\label{prt}
If $e \in \text{Pr}(\hat{C},\sigma)$, then either $\eta(I_P(x)-e) \equiv 0$ for all $x \in \hat{C}$ or $\text{div} _{\hat{C}}\eta(I_P(x)-e)=D$ is a degree $\hat{g}$ effective divisor satisfying
$$
\phi(e) = I(D)-I(q_1)-I(q_2)-\pi^* \Delta_C \quad \text{in }\hat{J},
$$
where $\Delta_C \in J$ is the vector of Riemann constants on $C$, and 
$$
D+\sigma(D)-q_1-q_2 \sim K_{\hat{C}},
$$
where $K_{\hat{C}}$ is the canonical class of $\hat{C}$. Moreover, $D$ is determined by these conditions.
\end{theorem}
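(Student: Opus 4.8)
\section*{Proof proposal}

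The plan is to treat this as the Prym analogue of Riemann's theta vanishing theorem and to prove it by the classical automorphy-factor argument, carried out for the Abel--Prym map $I_P$ in place of the Abel map. First I would regard $f(x):=\eta(I_P(x)-e)$ as a holomorphic section of the line bundle $L=I_P^*\,\mathcal O_{\text{Pr}}(\Theta)$ on $\hat C$, where $\Theta=\{\eta=0\}$ is the Prym theta divisor; equivalently, $f$ is a multivalued holomorphic function whose automorphy factors under the $A$- and $B$-cycles of $\hat C$ are prescribed by the quasi-periodicity $\eta(z+\Pi n)=\exp(-\pi i\,{}^t n\Pi n-2\pi i\,{}^t n z)\,\eta(z)$ composed with the Prym periods $\int_{A_j}\omega_k,\ \int_{B_j}\omega_k$ of the map $I_P$. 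If $f\equiv 0$ we are in the first alternative; otherwise $D:=\operatorname{div}_{\hat C}f$ is an effective divisor, and it remains to compute it.

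The core computation is the simultaneous evaluation of $\deg D$ and of the class $I(D)\in\hat J$ by integrating $\tfrac{1}{2\pi i}\,d\log f$ and $\tfrac{1}{2\pi i}\,I(x)\,d\log f$ over the boundary of a fundamental polygon for $\hat C$. The paired sides telescope through the automorphy factor exactly as in the proof of Riemann's theorem (cf.\ \cite{Fay73}, \cite{Tata2}), the $B$-cycle contributions supplying the quadratic period terms; this yields $\deg D=\hat g=2g$ and pins down $I(D)$, which after matching the additive constant against the Prym embedding $\phi$ and the definition of the Riemann constants becomes
\[
\phi(e)=I(D)-I(q_1)-I(q_2)-\pi^*\Delta_C .
\]
I expect this identification of the additive constant to be the main obstacle: it requires careful base-point bookkeeping (the Abel--Prym map is based at the $\sigma$-fixed point $q_1$) and, crucially, a comparison of the Riemann constants of the cover, i.e.\ controlling how $\Delta_{\hat C}$, $\pi^*\Delta_C$, and the ramification points $q_1,q_2$ interact under the double covering.

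The symmetry relation then follows formally. Since $q_1$ is a $\sigma$-fixed base point one has $\sigma_*I(x)=I(\sigma x)$, hence $I_P\circ\sigma=-I_P$, while $\sigma_*\phi(e)=-\phi(e)$ because $\phi(\text{Pr})$ is the anti-invariant locus of $\sigma_*$. Applying $\sigma_*$ to the Abel relation and using $\sigma_*\pi^*=\pi^*$ and $\sigma q_i=q_i$ gives $-\phi(e)=I(\sigma(D))-I(q_1)-I(q_2)-\pi^*\Delta_C$; adding the two relations eliminates $\phi(e)$, and invoking Riemann--Hurwitz $K_{\hat C}\sim \pi^*K_C+q_1+q_2$ together with the standard comparison of Riemann constants for the double cover collapses the identity to
\[
D+\sigma(D)-q_1-q_2\equiv K_{\hat C}.
\]
As a consistency check, taking degrees here recovers $2\deg D-2=2\hat g-2$, i.e.\ $\deg D=\hat g$, in agreement with the contour computation.

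Finally, for the uniqueness clause I would note that the Abel relation fixes the class of $D$ in $\mathrm{Cl}(\hat C)$, while $D$ is an effective divisor of degree $\hat g$ on the genus-$\hat g$ curve $\hat C$. For $e$ outside the locus where the associated line bundle is special we have $h^0=1$, so $D$ is the unique effective divisor in its class; this nonspecial locus is precisely the complement of the identical-vanishing case, so the dichotomy in the statement matches the dichotomy between special and nonspecial $e$. This completes the plan, with the Riemann-constant comparison for the cover as the one genuinely delicate ingredient.
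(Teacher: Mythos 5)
The paper itself contains no proof of Theorem \ref{prt}: it is stated in the appendix as recalled background, with the argument delegated to the references --- it is Fay's ramified-Prym analogue of Riemann's vanishing theorem (see \cite{Fay73} Ch.~V, \cite{Fay89}, \cite{Taim97}). So your proposal can only be measured against the literature argument, and in outline you do follow it: pull back the Prym theta divisor under the Abel--Prym map, run Riemann's fundamental-polygon contour computation to get $\deg D=\hat g$ and the class of $I(D)$, and use the anti-invariances $I_P\circ\sigma=-I_P$ and $\sigma_*\phi(e)=-\phi(e)$ to produce the symmetry relation. Those formal steps are sound.

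The genuine gap is that you defer, rather than prove, the one step that is the actual content of the theorem: the identification of the additive constant produced by the contour integration with $I(q_1)+I(q_2)+\pi^*\Delta_C$. The contour argument by itself only yields $I(D)=\phi(e)+c_0$, with $c_0$ a vector depending on the curve, the homology basis and the base points; that $c_0=I(q_1)+I(q_2)+\pi^*\Delta_C$ is exactly the ``comparison of Riemann constants for the double cover'' which you yourself flag as the main obstacle and never carry out. Moreover the gap propagates: your derivation of $D+\sigma(D)-q_1-q_2\equiv K_{\hat C}$ takes the Abel relation as input and, after applying $\sigma_*$ and adding, reduces via Riemann--Hurwitz and Abel's theorem to the identity $\pi^*I_C(K_C)=2\pi^*\Delta_C$ in $\hat J$ (where $I_C$ is the Abel map of $C$), which is the same unproved comparison and is sign-convention sensitive --- under the convention $I_C(K_C)=-2\Delta_C$ it would instead require $4\pi^*\Delta_C=0$, which is false in general. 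Finally, the uniqueness clause is not established: both displayed conditions depend only on the divisor class of $D$, so ``$D$ is determined'' requires $h^0(\mathcal O_{\hat C}(D))=1$ whenever $\eta(I_P(x)-e)\not\equiv 0$; your assertion that the identical-vanishing locus coincides with the locus of special classes is the Prym analogue of a nontrivial classical theorem (for Jacobians it is the Riemann/Lewittes vanishing criterion) and is stated without proof. In short, the skeleton is the correct one from the cited literature, but the two nontrivial ingredients of the theorem are both left open.
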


\begin{theorem}[Fay's quadrisecant identity \cite{Fay89}]\label{fqi}
Let $t \in Pr(\hat{C},\sigma)$, $z\in \hat{C}$  and suppose $x_k \in \hat{C}$ for $k \in \Z/n \Z$.  
\begin{align*}
\sum_{k=1}^{n}\frac{\eta(t+I_P(z)-I_P(x_k)-I_P(x_{k+1}))}{\eta(t-I_P(x_k))\eta(t-I_P(x_{k+1}))}\frac{E(x_k,x_{k+1})}{E(x_k,x'_{k+1})}\frac{E(z,x'_k)E(z,x'_{k+1})}{E(z,x_k)E(z,x_{k+1})}\\
=\frac{\eta\left(t-\sum_{i=1}^k I_P(x_k)\right)\eta(t+I_P(z))}{\prod_{k=1}^n\eta(t-I_P(x_k))}\prod_{k=1}^n \frac{E(x_k,x_{k+1})}{E(x_k,x'_{k+1})}.
\end{align*}
\end{theorem}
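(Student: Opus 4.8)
The plan is to fix the parameters $t$ and $x_1,\dots,x_n$ and to regard both sides of the asserted identity as functions of the single variable $z\in\hat C$. I would then show that the two sides are meromorphic sections of one and the same line bundle $L$ on $\hat C$, that $L$ has a one-dimensional space of global sections, and that the left-hand side is in fact holomorphic; the two sides are then forced to be proportional, and I would pin the constant of proportionality down to $1$.

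First I would identify $L$. The only $z$-dependence on the right-hand side is through $\eta(t+I_P(z))$, which by Theorem~\ref{prt} (applied with $e=-t$, for generic $t$) is a nonzero section whose divisor $D_t=\mathrm{div}_{\hat C}\,\eta(t+I_P(z))$ is effective of degree $\hat g=2g$; set $L=\mathcal O(D_t)$. Riemann--Roch on $\hat C$ gives $h^0(L)-h^0(K_{\hat C}-L)=\deg L-\hat g+1=1$, and since $K_{\hat C}-L$ has degree $2g-2<2g=\hat g$ it has no sections for generic $t$, so $h^0(L)=1$. Thus, once the left-hand side is known to be a holomorphic section of $L$, the two sides lie in a one-dimensional space and must agree up to a $z$-independent scalar.

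The two substantive verifications are therefore: (i) that each summand on the left, and hence their sum, is a section of $L$; and (ii) that the apparent poles of the individual summands cancel, so that the sum is holomorphic. For (ii), the $k$-th summand has simple poles in $z$ only at $z=x_k$ and $z=x_{k+1}$, coming from the factor $E(z,x_k)^{-1}E(z,x_{k+1})^{-1}$ (the theta factor is entire in $z$); hence each point $x_k$ receives a pole from exactly the two consecutive summands indexed $k-1$ and $k$, and using $E(z,w)\sim(z-w)$ on the diagonal I would compute the two residues and check that they are negatives of one another. For (i), I must check that the Prym-lattice automorphy of $\eta(t+I_P(z)-I_P(x_k)-I_P(x_{k+1}))$, combined with the automorphy of the prime-form ratio $E(z,x'_k)E(z,x'_{k+1})/\bigl(E(z,x_k)E(z,x_{k+1})\bigr)$ under the period lattice of $\hat C$, reproduces exactly the automorphy factor of $\eta(t+I_P(z))$; the conjugate points $x'_k=\sigma(x_k)$ are essential here, since they convert the full-Jacobian data $I(x_k)$ into the $\sigma$-anti-invariant combinations matching the Prym differentials $\omega_k=\sigma^*u_k+u_k$.

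Granting (i) and (ii), both sides are sections of $L$ with $h^0(L)=1$, hence proportional; I would fix the constant to be $1$ by evaluating at the branch point $z=q_1$, where $I_P(q_1)=0$ and the symmetry of the prime form gives $E(q_1,x'_k)=E(\sigma(q_1),\sigma(x_k))=E(q_1,x_k)$, collapsing each $z$-dependent prime-form ratio to $1$, or else by degenerating the configuration to the classical three-term (trisecant) case whose normalization is known; analyticity in $(t,x_1,\dots,x_n)$ then removes the genericity assumptions. I expect the main obstacle to be step (i): reconciling the Prym theta automorphy with the full prime-form automorphy on the double cover is precisely the bookkeeping that forces the specific pattern of conjugate points in the statement. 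As an alternative I would keep in reserve a derivation by restriction from the general $n$-term Fay identity on the Jacobian $\hat J$ of $\hat C$, using $I_P=\pi_1\circ I$, the relations $\sigma^*u_k=-u_{g+k}$, and the factorization of the Jacobian theta function $\theta$ through $\eta$ on the Prym to project all arguments onto $\mathrm{Pr}(\hat C,\sigma)$.
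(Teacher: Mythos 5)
The first thing to say is that the paper contains no proof of Theorem \ref{fqi} at all: it is stated in the appendix as background and attributed to \cite{Fay89}, so your proposal can only be judged against what a complete proof requires, not against an argument in the text. Your skeleton (fix $t,x_1,\dots,x_n$; view both sides as sections in $z$ of the line bundle $L=\mathcal O(D_t)$ with $D_t=\mathrm{div}_{\hat C}\,\eta(t+I_P(z))$; match automorphy; cancel poles; invoke $h^0(L)=1$; normalize) is the standard route, and steps (i) and (ii) can be carried out. The genuine gap is the normalization. Evaluating at $z=q_1$ does collapse every ratio $E(q_1,x'_k)E(q_1,x'_{k+1})/\bigl(E(q_1,x_k)E(q_1,x_{k+1})\bigr)$ to $1$, but it does not collapse the theta factors: what remains is
\[
\sum_{k=1}^{n}\frac{\eta(t-I_P(x_k)-I_P(x_{k+1}))}{\eta(t-I_P(x_k))\,\eta(t-I_P(x_{k+1}))}\,\frac{E(x_k,x_{k+1})}{E(x_k,x'_{k+1})}
\;=\;
\frac{\eta\bigl(t-\sum_{k=1}^n I_P(x_k)\bigr)\,\eta(t)}{\prod_{k=1}^n\eta(t-I_P(x_k))}\,\prod_{k=1}^n\frac{E(x_k,x_{k+1})}{E(x_k,x'_{k+1})},
\]
which is precisely the $z$-specialized form of the identity you are trying to prove and is no more trivial than the original ($z$ enters only as an auxiliary point). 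So ``evaluate at $q_1$'' determines the proportionality constant only if you already know the identity there: the argument is circular, and the same objection applies to evaluation at any single special value of $z$. Pinning the constant needs genuinely new input --- for instance an explicit computation of the finite part in the limit $z\to x_1$ where the two polar terms cancel, or (better) your ``reserve'' strategy of projecting an $n$-point identity on the full Jacobian $\hat J$ down to the Prym through the isogeny $\hat J\sim J\times\mathrm{Pr}(\hat C,\sigma)$, which is how identities of this type are actually derived and which produces the constant automatically.

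Two smaller problems. First, $h^0(L)=1$ does not follow from the degree count: $L$ is not a generic bundle of degree $2g$, since as $t$ varies it moves only in a $g$-dimensional Prym translate inside the $2g$-dimensional $\mathrm{Pic}^{2g}(\hat C)$, so you must rule out that this whole translate lies in the locus $\{h^0\ge 2\}$; the fix is to use the relation $D_t+\sigma(D_t)-q_1-q_2\sim K_{\hat C}$ from Theorem \ref{prt}, which gives $h^0(K_{\hat C}-L)=h^0(D_t-q_1-q_2)$, and then argue that this vanishes for generic $t$. Second, in step (ii) the sign bookkeeping is not routine: applying the two stated rules $E(x,y)=-E(y,x)$ and $E(x,y)=E(x',y')$ mechanically to trivialized prime forms makes the two residues at $z=x_j$ come out \emph{equal} rather than opposite, which would wreck the argument. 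The cancellation is real, but it rests on the mixed symmetry $E(a,b')=E(b,a')$ (symmetric, not antisymmetric) --- a sign invisible in the two rules as stated, because $E$ is a section of a line bundle on $\hat C\times\hat C$ rather than a function, and trivializations of the half-differentials at conjugate points introduce signs. All of this can be checked concretely in the genus-zero model $\hat C=\P^1$, $\sigma(x)=-x$, $E(x,y)=x-y$, where (up to exactly these sign conventions) the identity reduces to an explicit rational-function identity.
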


\Addresses

\end{document}